\newcommand{\ZZ}{\mathbb{Z}}
\newcommand{\CC}{\mathbb{C}}
\newcommand{\NN}{\mathbb{N}}
\newcommand{\QQ}{\mathbb{Q}}
\DeclareMathOperator{\SL}{SL}
\renewcommand{\sl}{\mathfrak{sl}}
\newcommand{\g}{\mathfrak{g}}
\newcommand{\ug}{\mathcal{U}_q({\hat{\mathfrak{g}}})}
\newcommand{\ub}{\mathcal{U}_q({\hat{\mathfrak{b}}})}
\newcommand{\Groth}{\foreignlanguage
{german}{Grothendieck }}
\newcommand{\sh}{\textnormal{sh}}
\newcommand{\loc}{\textnormal{loc}}
\DeclareMathOperator{\supp}{supp}
\newcommand{\grn}{\textnormal{grn}}
\DeclareMathOperator{\Id}{Id}
\DeclareMathOperator{\adj}{adj}
\newtheoremstyle{mytheoremstyle}%
{}
{}
{\itshape}
{}
{\bfseries}
{.}
{ }
{\thmname{#1}\thmnumber{ #2}\thmnote{ \textnormal{#3}}}
\theoremstyle{mytheoremstyle}
\newtheorem{theorem}{Theorem}[section]
\newtheorem{corollary}[theorem]{Corollary}
\newtheorem{proposition}[theorem]{Proposition}
\newtheorem{lemma}[theorem]{Lemma}
\newtheorem{conjecture}[theorem]{Conjecture}
\newtheoremstyle{mydefinition}%
{}
{}
{}
{}
{\bfseries}
{.}
{ }
{\thmname{#1}\thmnumber{ #2}\thmnote{ \textnormal{#3}}}
\theoremstyle{mydefinition}
\newtheorem{definition}[theorem]{Definition}
\newtheorem{example}[theorem]{Example}
\theoremstyle{remark}
\newtheorem{remark}[theorem]{Remark}
\newtheorem{notation}[theorem]{Notation}
\title{Quantum cluster algebras\\ and representations of shifted quantum affine algebras}
\author{Francesca Paganelli}
\date{}
\begin{document}
    
    \maketitle
    
    \begin{abstract}
        We construct a new quantization $K_t(\mathcal{O}^{\sh}_{\mathbb{Z}})$ of the Grothendieck ring of the category $\mathcal{O}^{\sh}_{\mathbb{Z}}$ of representations of shifted quantum affine algebras (of simply-laced type). We establish that our quantization is compatible with the quantum Grothendieck ring $K_t(\mathcal{O}^{\mathfrak{b},+}_{\mathbb{Z}})$ for the quantum Borel affine algebra, namely that there is a natural embedding $K_t(\mathcal{O}^{\mathfrak{b},+}_{\mathbb{Z}})\hookrightarrow K_t(\mathcal{O}^{\sh}_{\mathbb{Z}})$. Our construction is partially based on the cluster algebra structure on the classical Grothendieck ring discovered by Geiss--Hernandez--Leclerc. As first applications, we formulate a quantum analogue of $QQ$-systems (that we make completely explicit in type $A_1$). We also prove that the quantum oscillator algebra is isomorphic to a localization of a subalgebra of our quantum Grothendieck ring and that it is also isomorphic to the Berenstein--Zelevinsky's quantum double Bruhat cell $\mathbb{C}_t[\SL_2^{w_0,w_0}]$.
    \end{abstract}
    
    \tableofcontents
    \newpage
    
    \section*{Introduction}

    Let $\g$ be a simple, finite-dimensional, complex Lie algebra of simply-laced type. Shifted quantum affine algebras were introduced by Finkelberg and Tsymbaliuk \cite{ft} in the study of $K$-theoretic Coulomb branches. They are variations of quantum affine algebras depending on a parameter $\mu$ known as shift parameter. For every integral coweight $\mu$ of $\g$, the shifted quantum affine algebra $\mathcal{U}_q^{\mu}(\hat{\g})$ is defined by the same Drinfeld generators as the ordinary quantum affine algebra $\mathcal{U}_q(\hat{\g})$, subject to the same relations, except for a shift in the generating function of the elements $\phi_i$'s (the analogues of Cartan elements in the Drinfeld presentation). Note that since we consider here only Lie algebras of simply-laced type, weights and coweights can be identified. Shifted quantum affine algebras  possess a structure notably different from the ordinary ones. For example, the usual quantum group $\mathcal{U}_q(\sl_2)$ is a subalgebra of $\mathcal{U}_q(\widehat{\sl_2})$, but for the shift $\mu =  -\omega$ (the opposite of the fundamental weight for the Lie algebra $\sl_2$), instead the quantum oscillator algebra (e.g. from \cite{blz}) is a subalgebra of $\mathcal{U}_q^{-\omega}(\widehat{\sl_2})$.

    Hernandez \cite{sqaahernandez} initiated a systematic study of the representation theory of shifted quantum affine algebras, by introducing for each integral weight $\mu$ a category $\mathcal{O}^{\mu}$ of $\mathcal{U}_q^{\mu}(\hat{\g})$-representations. Then, the direct sum of abelian categories $\mathcal{O}^{\sh}:=\bigoplus_{\mu}\mathcal{O}^{\mu}$ is endowed with an operation called \emph{fusion product} that allows us to define a ring structure on the \Groth group $K_0(\mathcal{O}^{\sh})$. In this work we restrict the attention to $K_0(\mathcal{O}^{\sh}_{\ZZ})$, where $\mathcal{O}^{\sh}_{\ZZ}$ is a certain natural subcategory of $\mathcal{O}^{\sh}$, whose study is sufficient to understand most properties of the whole category $\mathcal{O}^{\sh}$. 

    Geiss, Hernandez and Leclerc \cite{ghl} proved that $K_0(\mathcal{O}^{\sh}_{\ZZ})$ is isomorphic to a completion of a cluster algebra denoted $\mathcal{A}_{w_0}$. In their work they construct an explicit initial quiver $\Gamma_c$ with vertex set $V$ for the cluster algebra $\mathcal{A}_{w_0}$ (see Figures \ref{fig:basic quiver A3} and \ref{fig:alcuni quiver A1}). A crucial property of this cluster algebra is that it is possible to compute the $g$-vectors of initial cluster variables with respect to a \emph{limit} reference seed (see Theorem \ref{teo stabilized g vectors}).  

    In this paper we establish the existence of a quantization of $K_0(\mathcal{O}^{\sh}_{\ZZ})$ and prove that this quantization contains the quantum Grothendieck ring of finite-dimensional representations of the ordinary quantum affine algebra. Our technique involves constructing a quantization of the cluster algebra $\mathcal{A}_{w_0}$ and then considering a completion of this quantum cluster algebra. This approach is partially inspired by the work of Bittmann \cite{theseBittmann} \cite{b21}, where the author defines the quantum \Groth ring $K_t(\mathcal{O}^{\mathfrak{b},\pm}_{\ZZ})$ providing a quantization of the cluster algebra structure isomorphic to $K_0(\mathcal{O}^{\mathfrak{b},\pm}_{\ZZ})$ discovered by Hernandez--Leclerc in \cite{hl16o} (here $\mathcal{O}^{\mathfrak{b},+}_{\ZZ}$ and $\mathcal{O}^{\mathfrak{b},-}_{\ZZ}$ are certain remarkable monoidal categories of representations of Borel subalgebra of the ordinary quantum affine algebra). Several new ingredients also play a role in the present work, particularly the limit reference seed.

    More generally, the interplay between quantum groups, cluster algebras and (quantum) \Groth rings of representations of quantum groups has been intensively studied. Interest in quantum Grothendieck rings for affine quantum groups originated with the works of Nakajima \cite{nak04} and Varagnolo--Vasserot \cite{vv}. For $t$ a formal indeterminate, they defined a non-commutative deformation of the \Groth ring of the category of finite-dimensional representations of the quantum affine algebra. The construction is based on the geometry of quiver varieties, and Nakajima's work provided an algorithm of Kazhdan--Lusztig type to compute $q$-characters of simple finite-dimensional representations of the quantum affine algebras of simply-laced type. Further results about quantum \Groth rings and their relations with cluster algebras can be found in the papers \cite{hl15, qin17, fhoo, kkop} and references therein .
    
    The first main result of this paper is the existence (and construction) of a quantization $\mathcal{A}_{t,w_0}$ of the cluster algebra $\mathcal{A}_{w_0}$. Consider an initial seed of $\mathcal{A}_{w_0}$. In order to define a quantum cluster algebra structure, we first need a quantum torus, which is a $\ZZ\big[t^{\pm \frac{1}{2}}\big]$-algebra $\mathcal{T}_{t,c}$ generated by the initial cluster variables $\{\underline{Q}_v\}_{v\in V}$, subject to the $t$-commutation relations
    \[\underline{Q}_v\underline{Q}_{w}=t^{(\Lambda_c)_{v,w}}\ \underline{Q}_w\underline{Q}_v,\] where the matrix $\Lambda_c$ is the so-called quantization matrix. Thus, the first thing to do is to define $\Lambda_c$. In Definition \ref{definizione lambda c} we introduce
    \[\Lambda_c:=\left(G^{(\infty)}\right)^T\Lambda_e G^{(\infty)},\] where $G^{(\infty)}$ is the \emph{limit} $G$-matrix defined in \cite{ghl} and $(\Lambda_e,B_e)$ is the compatible pair provided in \cite{b21} to construct $K_t(\mathcal{O}^{\mathfrak{b},+}_{\ZZ})$. Let $B_c$ be the exchange matrix of the initial quiver $\Gamma_c$ for $\mathcal{A}_{w_0}$. In Proposition \ref{quantum cluster ade} we prove that $(\Lambda_c,B_c)$ forms a \emph{compatible pair}, in the sense of \cite{bz05}. Our proof partially relies on Theorem \ref{teo convergenza}, which states that the matrix $B_c$ relates to the exchange matrix $B_e$ via the formula
    \[B_e=G^{(\infty)}B_c\left(G^{\infty}\right)^T.\] This generalizes a classical result (Lemma \ref{formula g matrix}) to infinite rank cluster algebras and infinite sequences of mutations.\\ 
    In Definition \ref{def quantum groth} we define the quantum Grothendieck ring $K_t(\mathcal{O}^{\sh}_{\ZZ})$ as a completion of $\mathcal{A}_{t,w_0}$. A key property of our definition of $K_t(\mathcal{O}^{\sh}_{\ZZ})$ is its compatibility with the quantum Grothendieck ring $K_t(\mathcal{O}^{\mathfrak{b},\pm}_{\ZZ})$ of \cite{b21}, and so with the original quantum Grothendieck ring for finite-dimensional representations of the ordinary quantum affine algebra. Indeed, we have the following two results. Let $\mathcal{T}_{t,e}$ be the quantum torus constructed in \cite{b21}. Then, in Theorem \ref{iso tori} we prove that $\mathcal{T}_{t,c}$ and $\mathcal{T}_{t,e}$ are isomorphic algebras. Using this, we prove in Theorem \ref{inclusione anelli} that there are two injective ring morphisms 
    \[\mathcal{I}_t^{\pm}:K_t(\mathcal{O}^{\mathfrak{b},\pm}_{\ZZ})\to K_t(\mathcal{O}^{\sh}_{\ZZ}).\] We provide detailed examples for Lie algebras of type $A_1$ and $A_2$, to clarify the constructions and explain the definitions (see Examples \ref{esempio A1 1}, \ref{esempio A1 2}, \ref{esempio A2}).
    
    Our construction of the quantum Grothendieck $K_t(\mathcal{O}^{\sh}_{\ZZ})$ has several consequences. 
    
    First, recall that the classical $QQ$-systems in the sense of \cite{fh23} are realized in \cite{ghl} as relations satisfied by certain cluster variables of $\mathcal{A}_{w_0}$. Thus, our construction yields to deformations of the $QQ$-systems: we obtain quantum $QQ$-systems as quantum exchange relations. As an example, we explicitly write a quantum version of the $QQ$-system in type $A_1$ (see Theorem \ref{quantum qq}).  In principle we can write quantum $QQ$-systems for all simply-laced types, as we provide an explicit formula for the quantization matrix $\Lambda_c$.
    
    A second application of the quantization of the cluster algebra $\mathcal{A}_{w_0}$ is the following. Let $\mathcal{U}_{t,loc}^+(\widehat{\sl_2})$ denote the quantum oscillator algebra (with quantum parameter $t$) localized at a central Casimir element. Then we consider the cluster subalgebra of $\mathcal{A}_{w_0}$ in type $A_1$ corresponding to a finite ice subquiver of $\Gamma_c$ (see Figure \ref{fig:initial seed q osc}). The quantum cluster algebra structure on $\mathcal{A}_{t,w_0}$ induces a compatible quantization of this subalgebra. Additionally, we assume that the frozen variables are invertible. The resulting quantum cluster algebra is denoted $\mathcal{A}_t$. We prove in Theorem \ref{teo q osc is cluster} that $\mathcal{U}_{t,loc}^+(\widehat{\sl_2})$ is isomorphic to $\mathcal{A}_t$. Consequently, we obtain in Corollary \ref{cor q osc} that
    \[\mathcal{U}^+_{t,\loc}(\widehat{\sl_2})\simeq \CC_t[\SL_2^{w_0,w_0}], \]
    where $\CC_t[\SL_2^{w_0,w_0}]$ is the Berenstein--Zelevinsky's quantum double Bruhat cell ($w_0$ denotes the longest element in the Weyl group of $\sl_2$, i.e. the simple reflection).

    We point out that, during the writing of this paper, we were informed that also Fan Qin \cite{qin24} had discovered a quantization of $\mathcal{A}_{w_0}$, with a different method. Qin's techniques do not involve the limit reference seed, and it is unclear whether his quantization contains the quantum Grothendieck ring for the Borel quantum affine subalgebra. Therefore, we expect our approach to have a more direct impact on the study of representation theory of shifted quantum affine algebras.

    In conclusion, we present some future directions of research that we plan to discuss in a forthcoming paper. Namely, motivated by the existence of the quantization of $\mathcal{A}_{w_0}$ in \cite{qin24}, we investigate whether Qin's quantization coincides with ours. More generally we discuss the uniqueness (or not) of the quantization for a given infinite rank cluster algebra. Another direction is motivated by \cite[Section 10]{ghl}. Let $G$ be a simple, complex algebraic group with Lie algebra $\g$. In \cite{ghl} the authors show that a finite, full subquiver $\gamma_c$ of the initial quiver $\Gamma_c$ for the cluster algebra $\mathcal{A}_{w_0}$ is equal to an initial quiver for the cluster algebra structure on the double Bruhat cell $G^{w_0,w_0}$, which is a result from \cite{fzbruhat}. From \cite{bz05} and \cite{yg}, the quantum double Bruhat cell $\CC_t[G^{w_0,w_0}]$ admits a quantum cluster algebra structure. Thus, we wonder if the subalgebra of our $\mathcal{A}_{t,w_0}$, corresponding to the subquiver $\gamma_c$, is the same quantum cluster algebra found in \cite{bz05} and in \cite{yg} (up to rescaling of the quantization matrix). We have a positive answer in type $A_1$ and $A_2$, and we conjecture that this is true in general. Finally we discuss possible applications to the existence of $(q,t)$-characters for shifted quantum affine algebras.
    
    The paper is organized as follows. In Sections \ref{section borel} and \ref{rep sqaa} we establish general notations about Lie algebras and we briefly review the definitions of the quantum affine Borel subalgebra and of shifted quantum affine algebras. We also make some reminders on categories $\mathcal{O}^\mathfrak{b}$, $\mathcal{O}^{\sh}$ and their \Groth rings.
    In Section \ref{section on cluster structure on gr rings} we collect some results about cluster algebra structures on Grothendieck rings that are fundamental for our construction. In particular we recall the definition of the basic infinite quiver and the cluster algebra structure on $K_0(\mathcal{O}^{\mathfrak{b},+}_{\ZZ})$ from \cite{hl16o}, the quantization procedure in \cite{b21} to obtain $K_t(\mathcal{O}^{\mathfrak{b},+}_{\ZZ})$ and the main results from \cite{ghl}. Concerning \cite{ghl}, we concentrate on the notion of stabilized $g$-vector and on the definition of the initial cluster seed, since we will use it strongly in our constructions. In addition, we clarify some ``implicit'' content from that paper. This is done in Lemma \ref{T gives leading term} and in Theorem \ref{teo diag comm}.
    Section \ref{the quantum grothendieck ring} is the core of the paper. We construct the quantum cluster algebra $\mathcal{A}_{t,w_0}$ by defining a quantum torus compatible with the initial seed provided by \cite{ghl}. Then we define the quantum \Groth ring $K_t(\mathcal{O}^{\sh}_{\ZZ})$ and we prove that it contains $K_t(\mathcal{O}^{\mathfrak{b},+}_{\ZZ})$. In Section \ref{section qq} we formulate quantum $QQ$-systems and in Section \ref{section q oscillator} we prove that the localized quantum oscillator algebra has a quantum cluster algebra structure. Finally, in Section \ref{further questions} we present some open questions.

    \section*{Acknowledgments}
    
    The results presented in this paper are part of my Ph.D. thesis under the supervision of David Hernandez and Giovanni Cerullli Irelli. I thank them for introducing me to these subjects, for many helpful discussions, opinions and support. I also thank Bernard Leclerc, Yann Palu, Fan Qin and Lior Silberberg for useful discussions and suggestions. Finally I thank the anonymous referee for the careful reading of the manuscript and precious comments. I acknowledge the support of \emph{Avvio alla ricerca} "Quantum Grothendieck ring for shifted quantum affine algebras" of Sapienza University, of NextGenerationEU - PRIN 2022 -B53D23009430006- \\2022S97PMY-Structures for Quivers, Algebras and Representations (SQUARE) and of \emph{National Group for Algebraic and Geometric Structures, and their Applications} (GNSAGA - INdAM).

    \section{Background: the quantum affine Borel algebra}\label{section borel}
    
    In this section we fix some notations and we recall briefly the definitions and some important properties of the quantum affine algebra $\mathcal{U}_q(\hat{\mathfrak{g}})$ and of its Borel subalgebra $\mathcal{U}_q(\hat{\mathfrak{b}})$.
    
    Let $q$ be a non-zero complex number, not root of unity. We fix throughout this work a complex number $h$ such that $q=e^h$ and for every $r\in\QQ$ we define $q^r=e^{rh}$. Thus for every $r,s\in\QQ$, $q^r=q^s$ if and only if $r=s$. Let $\g$ be a finite-dimensional simple Lie algebra of simply-laced type of rank $n$ and let $I$ be the set $\{1,...,n\}$. Let $\mathfrak{h}$ be a fixed Cartan subalgebra of $\mathfrak{g}$. Let $W$ be the Weyl group of $\g$ and $\{s_i\mid i\in I\}$ the simple reflections. We denote by $\{\omega_i\}_{i\in I}$ the fundamental weights and by $\{\alpha_i\}_{i\in I}$ the simple roots. 
    Then let $P$ be the lattice of integral weights and $P^+$ the set of dominant integral weights. We denote by $\leq$ the standard partial order on $P$, i.e. for all $\lambda$, $\mu$ in $P$, $\mu\leq \lambda$ if and only if $\lambda-\mu$ is a linear combination of the simple roots with non-negative integer coefficients. We use the notation $P_{\QQ}:=P\otimes\QQ$. Let $C=(c_{i,j})$ be the Cartan matrix. Let $\{h_i\}_{i\in I}$ be the basis of the Cartan subalgebra $\mathfrak{h}$ of $\g$ in the Serre presentation of $\g$. We denote by $\gamma$ the Dynkin diagram associated with $\g$.
    The quantum affine algebra $\ug$ has two realizations, known as Drinfeld--Jimbo realization and Drinfeld realization. 
    \begin{definition}\label{def uqg dj}
        The quantum affine algebra $\ug$ in the Drinfeld--Jimbo realization is the associative $\CC$-algebra with generators $e_i$, $f_i$, $k_i^{\pm}$, $0\leq i\leq n$ subject to the quantum Weyl--Serre relations (see for example \cite{hj12} for the list of relations).
    \end{definition}
    \begin{definition}
        The quantum affine Borel subalgebra $\ub$ is the subalgebra of $\ug$ generated by $e_i$, $k_i^{\pm 1}$, $i\in I\sqcup \{0\}$. 
    \end{definition}
    
    On the other hand, $\ug$ can be presented in the Drinfeld realization as the associative algebra with generators $x_{i,r}^{\pm}$ ($i\in I$, $r\in \ZZ$), $\phi_{i,r}^{\pm }$ ($i\in I$, $r\in\ZZ$), $h_{i,r}$ ($i\in I$, $r\in\ZZ\setminus\{0\}$) and central elements $c^{\pm\frac{1}{2}}$, subject to relations (see for example \cite{chari94} for the complete list of relations). Since we will need them later, we only recall the generating series of the $\phi_{i,r}^{\pm}$'s:
    \begin{equation}\label{sviluppo phi}
        \sum_{r=0}^\infty \phi_{i,\pm r}^{\pm}u^{\pm r}=k_i^{\pm 1}\mathrm{exp}\left(\pm(q-q^{-1})\sum_{m=1}^{\infty}h_{i,\pm m}u^{\pm m}\right).
    \end{equation}
    Note that the elements $\phi_{i,r}^+$, ($i\in I$, $r\in\ZZ$) belong to $\mathcal{U}_q(\hat{\mathfrak{b}})$.

    We recall some useful facts and notations about the representations of $\ub$. For more details, we refer to \cite{hj12}. 
    For a $\ub$-module $V$ and a weight $\omega\in P_{\QQ}$, the weight space of $V$ of weight $\omega$ is the linear subspace
    \[V_{\omega}:=\{v\in V\mid k_iv=q^{\omega(h_i)}v,\ 1\le i\leq n\}.\] 
    The notion of category $\mathcal{O}^{\mathfrak{b}}$ is given in \cite{hj12}, in analogy with BGG category $\mathcal{O}$. Namely, its objects are $\mathcal{U}_q(\hat{\mathfrak{b}})$-modules such that they are Cartan diagonalizable, with finite-dimensional weight spaces and weights contained in a finite union of cones of the form $D(\lambda)=\{\omega\in P_{\QQ}\mid \omega\leq\lambda\}$.
    
    \begin{definition}
        A sequence of complex numbers $\Psi=(\psi_{i,m})_{i\in I,m\geq 0}$ such that $\psi_{i,0}\in q^{\QQ}$ for all $i\in I$ is called an $\ell$-weight ($\ell$ stays for ``loop''). The set of $\ell$-weights is denoted $P_{\ell}$.
    \end{definition}
    It is convenient to identify $(\psi_{i,m})_{i\in I,m\geq 0}$ with its generating series, namely
    \[\Psi=(\psi_{i}(z))_{i\in I}\qquad \psi_i(z)=\sum_{m\geq 0}\psi_{i,m}z^m.\] Since $\psi_{i,0}\neq 0$, each $\psi_i(z)$ is an invertible power series, so $P_{\ell}$ has a group structure (with multiplicative notation).
    Then, we can define a surjective group homomorphism 
    \begin{equation}\label{mappa peso}
        \varpi: P_{\ell}\to P_{\QQ}
    \end{equation}
    which assigns to $\Psi\in P_{\ell}$ the weight $\varpi(\Psi)$ such that for all $i\in I$
    \[\psi_{i}(0)=\psi_{i,0}=q^{\varpi(\Psi)(h_i)}.\] Let $V$ be an $\ub$-module, $\Psi\in P_{\ell}$, then the corresponding $\ell$-weight space of $V$ is defined as the linear subspace 
    \[V_{\Psi}:=\{v\in V\mid\exists p\geq 0,\forall i\in I,\forall m\geq 0, (\phi_{i,m}^+-\psi_{,m})^pv=0 \}.\]
    
    As in the classical theory, there is a notion of highest weight module and, for any $\Psi\in P_{\ell}$ there exists a unique (up to isomorphism) simple module $L(\Psi)$ with highest $\ell$-weight $\Psi$.
    
    \begin{example}\label{rappresentazione costante}
        Let $\omega\in P_{\QQ}$, then we can define the $\ell$-weight $[\omega]$ by \[[\omega]=\left([\omega]_i(z)\right)_{i\in I},\qquad [\omega]_i(z)=q^{\omega(h_i)}.\] 
        The associated simple\\representation $L([\omega])$ is one-dimensional and it is called the constant representation. Note that the multiplication of two constant $\ell$-weights $[\omega]$ and $[\omega']$ is $[\omega][\omega']=([\omega][\omega']_i(z))_{i\in I}$, where $([\omega][\omega'])_i(z)=q^{(\omega+\omega')(h_i)}$. We will denote by $[P_{\QQ}]$ the group ring generated by the $[\lambda]$, $\lambda\in P_{\QQ}$.
    \end{example}
    
    \begin{example}\label{prefondamentali borel}
        Another important class of $\ub$-modules is the one of so-called positive/negative prefundamental representations. They have been introduced in \cite{hj12}. For $i\in I$ and $a\in \CC^{*}$ we define
        \[L_{i,a}^{\mathfrak{b},\pm}=L(\Psi_{i,a}^{\pm 1}),\qquad (\Psi_{i,a})_j(z)=\begin{cases}
            
                1-za\quad  &j=i;\\
                1\quad     &j\neq i
           
        \end{cases}\] These representations are all infinite dimensional.
    \end{example}
    
    \begin{definition}\label{rational l weights}
        An $\ell$-weight $\Psi$ is called rational if $\psi_i(z)$ is rational for all $i\in I$. We call $\mathfrak{r}$ the group of rational $\ell$-weights.
    \end{definition}
    Then, any $\ub$-module $V$ in the category $\mathcal{O}^{\mathfrak{b}}$ has rational $\ell$-weights.
    \begin{theorem}[{\cite[Theorem 3.11]{hj12}}]
        The simple modules in the category $\mathcal{O}^{\mathfrak{b}}$ are the $L(\Psi)$ with $\Psi\in\mathfrak{r}$.
    \end{theorem}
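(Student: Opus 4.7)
The plan is to establish the two inclusions separately. For the forward direction, given a simple $V\in\mathcal{O}^{\mathfrak{b}}$, the first step is to produce a highest $\ell$-weight vector. The category assumptions — weights contained in a finite union of cones together with finite-dimensional weight spaces — ensure that the set of weights of $V$ admits a maximal element $\lambda$ for the standard partial order. The space $V_\lambda$ is then finite-dimensional, stable under the commuting family $(\phi_{i,m}^+)_{i\in I,\, m\geq 0}$, and annihilated by every $e_i$ with $i\in I$ by maximality. A simultaneous generalized eigenvector $v\in V_\lambda$ then provides an $\ell$-weight $\Psi\in P_\ell$ with $\varpi(\Psi)=\lambda$, and after a separate weight argument to check that $v$ is also killed by $e_0$, simplicity of $V$ together with the uniqueness of the simple quotient of a highest $\ell$-weight module gives $V\cong L(\Psi)$.

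The main obstacle is then to show that such a $\Psi$ is rational. Here I would exploit the finite-dimensionality of the weight spaces just below $\lambda$. Using the Drinfeld relation controlling $[\phi_{i,r}^+,x_{j,s}^-]$, the vectors $x_{i,r}^-v$ for varying $r\in\ZZ$ span a subspace of $V_{\lambda-\alpha_i}$ which is forced to satisfy non-trivial linear dependencies; combined with the eigenvalue equations $\phi_{i,m}^+v=\psi_{i,m}v$, these dependencies translate into a linear recurrence for the coefficients $\psi_{i,m}$, which is exactly the condition that each $\psi_i(z)$ is a rational function. A lighter alternative, exploiting the categorical structures already introduced in the paper, is to realize every simple module in $\mathcal{O}^{\mathfrak{b}}$ as a subquotient of a tensor product of constant representations $L([\omega])$ and prefundamental representations $L_{i,a}^{\mathfrak{b},\pm}$, whose $\ell$-weights are rational by construction, so that the highest $\ell$-weight of any subquotient is automatically a product of rational series.

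For the reverse inclusion, given $\Psi\in\mathfrak{r}$, I would construct a Verma-type $\ub$-module $M(\Psi)$ by inducing a one-dimensional module of the subalgebra generated by the $e_i$'s and the $\phi_{i,m}^+$'s, on which $\phi_i(z)$ acts by the scalar series $\psi_i(z)$ and each $e_i$ acts by zero. By construction $M(\Psi)$ has highest $\ell$-weight $\Psi$ and admits a unique simple quotient $L(\Psi)$. To place $M(\Psi)$ (and hence $L(\Psi)$) inside $\mathcal{O}^{\mathfrak{b}}$, I would check that its weights lie in the cone $\varpi(\Psi)-\sum_{i\in I}\ZZ_{\geq 0}\,\alpha_i$ and that each weight space is finite-dimensional; this is precisely the point where rationality of each $\psi_i(z)$ is needed, as the corresponding linear recurrence is exactly what reduces the action of the $\phi_{i,m}^+$'s on each weight subspace of $M(\Psi)$ to a finite-dimensional problem, yielding the required finiteness and completing the classification.
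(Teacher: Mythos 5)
The paper cites this result from Hernandez–Jimbo \cite{hj12} without reproving it, so the comparison is against the argument in that reference. Your forward direction is essentially sound and is close in spirit to \cite{hj12}: pick a maximal weight $\lambda$, use that $V_\lambda$ is finite-dimensional and $\phi_{i,m}^+$-stable to extract a simultaneous eigenvector, and deduce rationality of the $\psi_i(z)$ from the linear recurrence forced by the finite dimension of $V_{\lambda-\alpha_i}$ together with the Drinfeld relations. One slip: you say a ``separate weight argument'' shows $v$ is killed by $e_0$. But $e_0$ has $\g$-weight $-\theta$, so it \emph{lowers} the classical weight; a highest $\ell$-weight vector is not, and need not be, annihilated by $e_0$. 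The correct highest-weight condition is annihilation by the Drinfeld raising modes $x_{i,r}^+$, which already follows from maximality of $\lambda$, so this extra step is both unneeded and unachievable.

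The reverse direction has a genuine gap. You propose to place the Verma-type module $M(\Psi)$ into $\mathcal{O}^{\mathfrak{b}}$ and to use rationality to control its weight-space dimensions. But inside $\ub$ the lowering part is generated by the infinitely many modes $x_{i,r}^-$ ($r\geq 1$), all of the same $\g$-weight $-\alpha_i$. In a freely induced module the vectors $x_{i,r}^-v$ are linearly independent, so $M(\Psi)_{\varpi(\Psi)-\alpha_i}$ is infinite-dimensional regardless of whether $\Psi$ is rational. Rationality constrains the eigenvalues $\psi_{i,m}$; it does not impose linear dependence among the $x_{i,r}^-v$, which appears only in the simple quotient. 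So $M(\Psi)\notin\mathcal{O}^{\mathfrak{b}}$ in general, and the conclusion for $L(\Psi)$ cannot be inherited from it. Establishing that $L(\Psi)$ itself has finite-dimensional weight spaces when $\Psi$ is rational is precisely the hard content of \cite[Theorem 3.11]{hj12}: there one first constructs the prefundamental representations $L_{i,a}^{\mathfrak{b},\pm}$ explicitly (the positive ones as an asymptotic limit of Kirillov--Reshetikhin modules), verifies those lie in $\mathcal{O}^{\mathfrak{b}}$, and then realizes every $L(\Psi)$ with $\Psi\in\mathfrak{r}$ as a subquotient of a tensor product of prefundamentals and one-dimensional constant representations. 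You gesture at this tensor-product route only in the forward direction, but it is exactly what must replace the Verma argument in the reverse direction.
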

    Finally, every module in the category $\mathcal{O}^{\mathfrak{b}}$ is direct sum of its $\ell$-weight spaces.
    
    We define $\mathcal{E}$ to be the additive group of maps $c:P_{\QQ}\to \ZZ$ whose support $\supp(c)$ is contained in a finite union of cones $D(\lambda)$. For any $\lambda\in P$ we define $[\lambda]=\delta_{\lambda,-}\in\mathcal{E}$. Every element in $\mathcal{E}$ can be regarded as a formal sum
    \[c=\sum_{\lambda\in\supp(c)}c(\lambda)[\lambda].\] 
    We can endow $\mathcal{E}$ with a ring structure, defining a product by $[\lambda][\lambda']=[\lambda+\lambda']$ for any $\lambda,\lambda'\in P_{\QQ}$.  This definition for the product in $\mathcal{E}$ is compatible with the multiplication in the group $P_{\QQ}$ and consistent with the notation for the constant representation in Example \ref{rappresentazione costante}.
    
    Thanks to a Jordan--H\"{o}lder type theorem, the multiplicity of a simple representation in a representation in $\mathcal{O}^{\mathfrak{b}}$ is well defined. Hence, we can think of an element in the Grothendieck ring $ K_0(\mathcal{O}^{\mathfrak{b}})$ as a sum
    \[\sum_{\Psi\in\mathfrak{r}}m_{\Psi}[L(\Psi)], \] where $m_{\Psi}\in\ZZ$ verify:
    \[\sum_{\Psi\in\mathfrak{r},\lambda\in P_{\QQ}}|m_{\Psi}|\dim(L(\Psi)_{\lambda})[\lambda]\in\mathcal{E}.\] 
    As in the theory of $q$-characters for representations of the quantum affine algebras by Frenkel and Reshetikhin \cite{fr}, there is a $q$-character morphism for representations in $ \mathcal{O}^{\mathfrak{b}}$. We enlarge $\mathcal{E}$ to obtain $\mathcal{E}_{\ell}$, that is the additive group of maps $\zeta:\mathfrak{r}\to \ZZ$ such that $\varpi(\supp(\zeta))$ is contained in a finite union of cones $D(\lambda)$ ($\varpi $ is defined on the support of $\zeta$ because $\mathfrak{r}$ is a subset of $P_{\ell}$). For every $\Psi\in\mathfrak{r}$, let $[\Psi]\in\mathcal{E}_{\ell}$ be $\delta_{\Psi,-}$. Then, any element $\zeta\in\mathcal{E}_{\ell}$ can be written as a formal sum
    \[\zeta=\sum_{\Psi\in \supp(\zeta)}\zeta(\Psi)[\Psi].\] $\mathcal{E}_{\ell}$ has a ring structure induced by
    \[[\Psi][\Psi']=[\Psi\Psi'],\qquad \forall\Psi,\Psi'\in\mathfrak{r}.\]
    The $q$-character of a representation $V\in\mathcal{O}^{\mathfrak{b}}$ is defined as
    \[\chi_q(V):=\sum_{\Psi \in\mathfrak{r}_{\mu}}\dim(V_{\Psi})[\Psi]\in\mathcal{E}_{\ell}.\] 
    The $q$-character morphism $\chi_q: K(\mathcal{O}^{\mathfrak{b}})\to \mathcal{E}_{\ell}$ is injective \cite[Proposition 3.12]{hj12}.

    We conclude the section recalling that in \cite{hl16o} two subcategories of the category $\mathcal{O}^{\mathfrak{b}}$ are defined. They are denoted by 
    \[\mathcal{O}^{\mathfrak{b},+},\quad \mathcal{O}^{\mathfrak{b},-}.\]Here we do not give the definitions, but we point out that they are monoidal, hence we can consider the \Groth rings $K_0(\mathcal{O}^{\mathfrak{b},+})$ (resp. $K_0(\mathcal{O}^{\mathfrak{b},-})$). These categories contain the category of finite-dimensional $\mathcal{U}_q(\hat{\mathfrak{b}})$-modules. In particular, $K_0(\mathcal{O}^{\mathfrak{b},\pm})$ contains as subring the Grothendieck ring of constant representations, that we can naturally identify with $\mathcal{E}$. Moreover, $\mathcal{O}^{\mathfrak{b},+}$ (resp. $\mathcal{O}^{\mathfrak{b},-}$) contains the positive (resp. negative) prefundamental representations $L_{i,a}^{\mathfrak{b},+}$ (resp. $L_{i,a}^{\mathfrak{b},-}$), for $i\in I$ and $a\in\CC^{*}$. Finally, the rings $K_0(\mathcal{O}^{\mathfrak{b},\pm})$ are isomorphic (see \cite{hl16o}).

    \section{Background: shifted quantum affine algebras }\label{rep sqaa}
    
    In this section we give an overview about shifted quantum affine algebras and their category of representations $\mathcal{O}^{\sh}$. The references are \cite{ft} and \cite{sqaahernandez}. We keep the notation from the previous section; in particular $\g$ still denotes a finite-dimensional simple Lie algebra of simply-laced type.

    \begin{definition}
        Let $\mu\in P$. The shifted quantum affine algebra $\mathcal{U}_q^{\mu}(\hat{\g})$ is the associative $\CC$-algebra generated by the same elements $x_{i,r}^{\pm}$, $\phi_{i,r}^{\pm }$ ($i\in I$, $r\in\ZZ$), $h_{i,r}$ ($i\in I$, $r\in \ZZ\setminus\{0\}$) of the Drinfeld presentation of the quantum affine algebra, subject to the same relations, except that 
        \[\phi_i^-(z)=\sum_{m=0}^{\infty}\phi_{i,-m}^-z^{-m}=\phi^-_{i,\mu(h_i)}z^{\mu(h_i)}\mathrm{exp}\left(-(q-q^{-1})\sum_{r=1}^{\infty}h_{i,-r}z^{-r}\right),\] where we recall that $h_i$ denotes a generator of $\mathfrak{h}$ in a fixed Serre presentation of $\g$. Moreover, $\phi_{i,0}^+$ and $\phi_{i,\alpha_i(\mu)}^-$ are invertible, but not necessarily inverse of each other.
    \end{definition}
    Note that we assume that the shift modifies only the relations for $\phi_{i,r}^-$, so, as in the case of the quantum affine algebras (non-shifted), $\phi_{i,0}^+=k_i$. Shifted quantum affine algebras constitute a large framework to deal with other well known quantum groups, namely:
    \begin{itemize}
        \item the quantum affine algebra $\mathcal{U}_q(\hat{\g})$ can be obtained as the quotient of $\mathcal{U}_q^0(\hat{\g})$ by the ideal generated by $\phi_{i,0}^+\phi_{i,0}^- -1$, see \cite{ft};\\
        \item if $\mu$ is antidominant, then $\mathcal{U}_q^{\mu}(\hat{\g})$ contains a subalgebra isomorphic to the quantum affine Borel algebra $\mathcal{U}_q(\hat{\mathfrak{b}})$ from Section \ref{section borel}, see \cite{sqaahernandez};\\
        \item for $\g=\sl_2$, the algebra $\mathcal{U}_q^{-\omega}(\widehat{\sl_2})$ contains as sub-algebras the quantum oscillator algebras $\mathcal{U}_q^+(\sl_2)$ and $\mathcal{U}_q^-(\sl_2)$, whose definitions are given below. 
    
    \end{itemize}

    \begin{definition}\label{quantum osc}
        The \emph{quantum oscillator algebras} $\mathcal{U}_q^{\pm}(\sl_2)$ are generated by the elements $e,f,k^{\pm 1}$ with relations
        \begin{equation*}
            kk^{-1}=k^{-1}k=1,\quad ke=q^2ek,\quad kf=q^{-2}fk,\quad [e,f]=\frac{\pm k^{\pm 1}}{q-q^{-1}}.
        \end{equation*} This definition should be compared with the one of the quantum universal enveloping algebra $\mathcal{U}_q(\sl_2)$. Moreover, note that exchanging $k$ and $k^{-1}$ we obtain that 
        \[\mathcal{U}_q^+(\sl_2)\simeq\mathcal{U}_q^-(\sl_2).\] As observed in \cite[Remark 3.1.iv]{sqaahernandez}, the quantum oscillator algebra $\mathcal{U}_q^+(\sl_2)$ is isomorphic to a sub-algebra of $\mathcal{U}_q^{-\omega}(\widehat{\sl_2})$. Indeed, the defining relations of $\mathcal{U}_q^{-\omega}(\widehat{\sl_2})$ differ from the ones of $\mathcal{U}_q(\widehat{\sl_2})$ only for
        \[[x_r^+,x_{-r}^-]=\frac{\phi_0^+}{q-q^{-1}}.\] Moreover, by definition, $\phi_0^+$ has an inverse that we denote $(\phi_0^+)^{-1}$. Then, if we consider the sub-algebra of $\mathcal{U}_q^{-\omega}(\widehat{\sl_2})$ generated by 
        \[x_0^+,x_0^-,\phi_0^+,(\phi_0^+)^{-1},\]
        we see that the assignment 
        \[e\mapsto x_0^+,\ f\mapsto x_0^-,\ k\mapsto \phi_0^{+},\ k^{-1}\mapsto(\phi_0^+)^{-1}\] defines an isomorphism.
    \end{definition}
    In \cite[Definition 4.8]{sqaahernandez} the category $\mathcal{O}^{\mu}$ of representations of $\mathcal{U}_q^{\mu}(\hat{\g})$ is defined. Here we consider the same definition as in \cite[Definition 9.2]{ghl}, which is analogous to the one of $\mathcal{O}^{\mathfrak{b}}$. The $\ell$-weights of modules in $\mathcal{O}^{\mu}$ are in 
    \[\mathfrak{r}_{\mu}:=\{\Psi=(\psi_i(z))_{i\in I}\in\mathfrak{r}\mid\deg(\psi_i(z))=\mu(h_i)\}.\]
    For more properties and details about $\mathcal{O}^{\mu}$, see \cite[Section 4.4]{sqaahernandez}.
    \begin{example}\label{positive pref}
        The simple representations that we will use the most are the so-called prefundamental representations. Recall the definition of $\Psi_{i,a}^{\pm 1}$ from Example \ref{prefondamentali borel}. We denote $L_{i,a}^{\pm}=L(\Psi_{i,a}^{\pm 1})$ the positive/negative prefundamental representations, which are objects in $\mathcal{O}^{\pm \omega_i}$, $i\in I$, $a\in \CC^{\times}.$ Note that although they share the same name and same highest $\ell$-weight, $L_{i,a}^{\mathfrak{b},\pm}$ and $L_{i,a}^{\pm}$ are different. Indeed, the $\mathcal{U}_q^{\omega_i}(\hat{\g})$-module $L_{i,a}^+$ is one-dimensional, whereas $L_{i,a}^{\mathfrak{b},+}$ is infinite-dimensional. On the other hand, the $\mathcal{U}_q^{-\omega_i}(\hat{\g})$-module $L_{i,a}^-$ is infinite-dimensional and the restriction to the action of $\mathcal{U}_q(\hat{\mathfrak{b}})$ coincides with $L_{i,a}^{\mathfrak{b},-}$ (recall that for antidominant weights $\mathcal{U}_q(\hat{\mathfrak{b}})\subset\mathcal{U}_q^{\mu}(\hat{\g})$).
    \end{example}
    We will consider simultaneously all the categories $\mathcal{O}^{\mu}$, $\mu \in P$ by defining the direct sum of abelian categories
    \begin{equation}
        \mathcal{O}^{\sh}:=\bigoplus_{\mu\in P}\mathcal{O}^{\mu}.
    \end{equation}
     The simple objects in $\mathcal{O}^{\sh}$ are parameterized by $\mathfrak{r}$.

    Let $ K_0(\mathcal{O^{\mu}})$ be the Grothendieck group of the category $\mathcal{O}^{\mu}$. The direct sum
    \[ K_0(\mathcal{O}^{\sh}):=\bigoplus_{\mu\in P} K_0(\mathcal{O}^{\mu})\] can be endowed with a ring structure thanks to a fusion product (see \cite{sqaahernandez}): \[ K_0(\mathcal{O}^{\mu_1})\cdot K_0(\mathcal{O}^{\mu_2})\subset  K(\mathcal{O}^{\mu_1
    +\mu_2}),\] defined in \cite{sqaahernandez}, obtained by Drinfeld coproduct. There is a $q$-character morphism for representations in $\mathcal{O}^{\sh}$. It is defined in a similar way to the one used in the previous section for the category $\mathcal{O}^{\mathfrak{b}}$. \\
    For a representation $V\in\mathcal{O}^{\mu}$, the $q$-character is
    \[\chi_q(V):=\sum_{\Psi \in\mathfrak{r}_{\mu}}\dim(V_{\Psi})[\Psi]\in\mathcal{E}_{\ell}.\] 
    \begin{example}
        For $i\in I$ and $a\in\CC^*$, the positive prefundamental representation $L_{i,a}^+$ of $\mathcal{U}_q^{\omega_i}(\hat{\g})$ satisfies:
    \[\chi_q(L_{i,a}^+)=[\Psi_{i,a}].\]
    \end{example}
    The next result, that we present as a proposition, is established in \cite{ghl} and is proved with the same argument  as in \cite[Theorem 4.19]{w}.
    \begin{proposition}\label{iso q car}
        The $q$-character induces a ring isomorphism
        \[\chi_q: K_0(\mathcal{O}^{\sh})\xrightarrow{\simeq}\mathcal{E}_{\ell}.\]
    \end{proposition}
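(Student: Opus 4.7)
The strategy I would follow is the standard one for results of this type, adapting the argument of \cite[Theorem 4.19]{w} to the setting of the category $\mathcal{O}^{sh}$. I would organize the proof into four steps: well-definedness of $\chi_q$, the ring morphism property, injectivity, and surjectivity, with only the last being truly delicate.

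For well-definedness, if $V \in \mathcal{O}^{\mu}$, then its $\ell$-weights all lie in $\mathfrak{r}_{\mu}$, and its underlying ordinary weights lie in a finite union of cones $D(\lambda)$ by definition of the category. Since the map $\varpi$ sends each $\ell$-weight of $V$ to its underlying weight, $\varpi(\mathrm{supp}(\chi_q(V)))$ is automatically contained in a finite union of cones, so $\chi_q(V)\in\mathcal{E}_{\ell}$. Extending $\ZZ$-linearly and using that elements of $K_0(\mathcal{O}^{sh})$ are already formal sums satisfying the cone condition, the image remains in $\mathcal{E}_{\ell}$. The multiplicative property $\chi_q(V_1\cdot V_2)=\chi_q(V_1)\chi_q(V_2)$ follows from the fusion product being constructed from the Drinfeld coproduct, under which the generalized $\phi_i^{\pm}$-eigenvalues multiply, so the $\ell$-weight spaces of a fusion product decompose in the expected way.

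For injectivity, I would use the usual triangularity of $q$-characters: for every $\Psi\in\mathfrak{r}$, the simple module $L(\Psi)$ satisfies
\[\chi_q(L(\Psi))=[\Psi]+\sum_{\Psi'}d_{\Psi,\Psi'}[\Psi'],\]
where the sum runs over $\Psi'$ with $\varpi(\Psi')<\varpi(\Psi)$ in the usual partial order on $P$ (since $\Psi$ is by definition the highest $\ell$-weight of $L(\Psi)$). Given this upper-triangular shape with respect to $\varpi$, distinct formal sums $\sum_{\Psi}m_{\Psi}[L(\Psi)]$ cannot produce the same element of $\mathcal{E}_{\ell}$.

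For surjectivity, given $c=\sum_{\Psi\in\mathrm{supp}(c)}c(\Psi)[\Psi]\in\mathcal{E}_{\ell}$, I would construct its preimage $\sum_{\Psi}m_{\Psi}[L(\Psi)]$ recursively by Gaussian elimination along the partial order on $\varpi(\mathrm{supp}(c))$: pick a $\varpi$-maximal $\Psi_0$ still present in the remaining support, set $m_{\Psi_0}:=c(\Psi_0)$, subtract $m_{\Psi_0}\chi_q(L(\Psi_0))$, and iterate. The main obstacle, and really the crux of the argument, is to verify that this (possibly transfinite) procedure produces a legitimate element of $K_0(\mathcal{O}^{sh})$, i.e.\ that the set $\{\Psi\mid m_{\Psi}\neq 0\}$ satisfies the cone condition defining $\mathcal{E}_{\ell}$. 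This is controlled by the triangularity: all new $\ell$-weights introduced at a given step satisfy $\varpi(\Psi')\leq\varpi(\Psi_0)$, hence remain in the finite union of cones already containing $\varpi(\mathrm{supp}(c))$. A noetherianity argument on the partial order of $P$ restricted to a finite union of cones then ensures that only finitely many stages of the recursion contribute to each individual $m_{\Psi}$, so the construction is well defined and yields the required preimage.
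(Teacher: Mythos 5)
The paper does not give a proof of this proposition: it is presented as a result established in \cite{ghl} and "proved with the same argument as in \cite[Theorem 4.19]{w}", so there is no written proof in the paper to compare against. Your sketch is a reasonable reconstruction of that standard argument (triangularity of $q$-characters for injectivity, recursive inversion for surjectivity, finiteness controlled by the cone condition), and it matches the strategy the paper points to.

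One place where your sketch is thinner than the argument actually needs to be is the surjectivity step. You argue that the set of stages is finite via noetherianity of the partial order on $P$ restricted to a finite union of cones, which correctly controls the finitely many relevant \emph{weights} $\mu>\varpi(\Psi)$. But each weight $\mu$ has an infinite fiber $\varpi^{-1}(\mu)\subset\mathfrak{r}$, and the cone condition in $\mathcal{E}_\ell$ only constrains $\varpi(\mathrm{supp})$, not the support itself. So for the recursion to determine each $m_\Psi$ by a \emph{finite} computation, you also need: for fixed $\Psi$ and fixed $\mu>\varpi(\Psi)$, only finitely many $\Psi'$ with $\varpi(\Psi')=\mu$ can contribute, i.e.\ have $L(\Psi')_\Psi\neq 0$ and be reachable in the elimination. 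This is true, but it is a nontrivial structural fact about $q$-characters (the lowering monomials $A_{j,b}^{-1}$ that can appear in $\chi_q(L(\Psi'))$ have spectral parameters $b$ constrained by $\Psi'$), not a consequence of noetherianity of $P$ alone. Your write-up should make this point explicit, since it is precisely the step where the argument uses information about the shape of $q$-characters rather than only combinatorics of the weight lattice.
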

    Then, $ K_0(\mathcal{O}^{\sh})$ has the structure of topological ring inherited by the one on $\mathcal{E}_{\ell}$, with a topological basis made of $[L(\Psi)]$, $\Psi\in\mathfrak{r}$. In the rest of the paper, similar topologies will be used on other \Groth rings.

    \section{Grothendieck rings and cluster algebras}\label{section on cluster structure on gr rings}
    This section is devoted to the collection of results about cluster algebra structures on \Groth rings of categories of representations for some quantum groups, namely the quantum affine Borel algebras and shifted quantum affine algebras. As in the rest of this work, $\g$ denotes a simple Lie algebra of simply-laced type. 
    \subsection{Basic infinite quiver}
    Let us denote $\Tilde{V}:=I\times \ZZ$. In \cite{hl16o} the authors define the quiver $\Tilde{\Gamma}$: its vertex set is $\Tilde{V}$ and the arrows are given by
    \begin{equation}
        (i,r)\to (j,s)\ \Longleftrightarrow\ (c_{i,j}\neq 0\ \text{and}\ s=r+c_{i,j}).
    \end{equation}
    $\Tilde{\Gamma}$ has two isomorphic connected components. We take one of them, we denote it $\Gamma_e$ and we call it the basic infinite quiver. Its vertex set will be denoted $V$ (see Figure \ref{fig:basic quiver A3} ). In the notations of \cite{ghl}, $\Gamma_e$ is the quiver $\Gamma_C$. 

    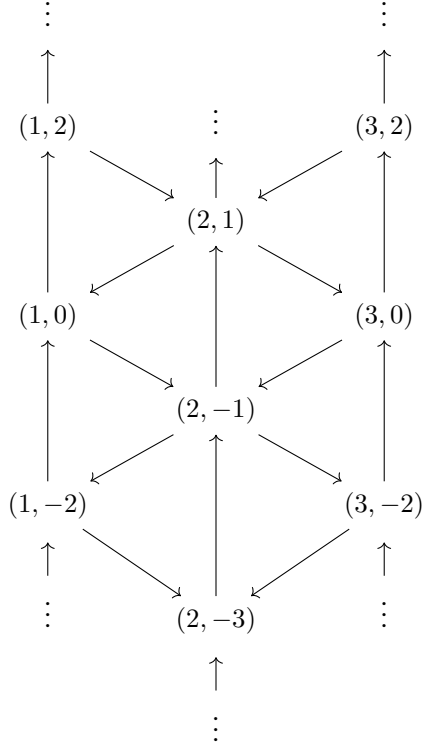
\begin{figure}[H]
        \centering
        
        \[\begin{tikzcd}
    	\vdots && \vdots \\
    	{(1,2)} & \vdots & {(3,2)} \\
    	& {(2,1)} \\
    	{(1,0)} && {(3,0)} \\
    	& {(2,-1)} \\
    	{(1,-2)} && {(3,-2)} \\
    	\vdots & {(2,-3)} & \vdots \\
    	& \vdots
    	\arrow[shorten >=5pt, from=2-1, to=1-1]
    	\arrow[from=2-1, to=3-2]
    	\arrow[shorten >=5pt, from=2-3, to=1-3]
    	\arrow[from=2-3, to=3-2]
    	\arrow[shorten >=5pt, from=3-2, to=2-2]
    	\arrow[from=3-2, to=4-1]
    	\arrow[from=3-2, to=4-3]
    	\arrow[from=4-1, to=2-1]
    	\arrow[from=4-1, to=5-2]
    	\arrow[from=4-3, to=2-3]
    	\arrow[from=4-3, to=5-2]
    	\arrow[from=5-2, to=3-2]
    	\arrow[from=5-2, to=6-1]
    	\arrow[from=5-2, to=6-3]
    	\arrow[from=6-1, to=4-1]
    	\arrow[from=6-1, to=7-2]
    	\arrow[from=6-3, to=4-3]
    	\arrow[from=6-3, to=7-2]
    	\arrow[shorten >=5pt, from=7-1, to=6-1]
    	\arrow[from=7-2, to=5-2]
    	\arrow[shorten >=5pt, from=7-3, to=6-3]
    	\arrow[shorten >=5pt, from=8-2, to=7-2]
        \end{tikzcd}\]
        \caption{$\Gamma_e$ in type $A_3.$}
        \label{fig:basic quiver A3}
    \end{figure}
    \subsection{Categories $\mathcal{O}^{\mathfrak{b},\pm}_{\ZZ}$}\label{sezione categorie O}
    In \cite{hl16o}, the authors have defined the full sub-categories $\mathcal{O}^{\mathfrak{b},\pm}_{\ZZ}$ of $\mathcal{O}^{\mathfrak{b},\pm}$ as follows:
    \begin{definition}[\cite{hl16o}]
        $\mathcal{O}^{\mathfrak{b},\pm}_{\ZZ}$ is the sub-category of $\mathcal{O}^{\mathfrak{b},\pm}$ whose simple constituents have highest $\ell$-weights $\Psi=(\psi_i(z))_{i\in I}$ such that all roots and poles of $\psi_i(z)$ are of the form $q^r$, with $(i,r)\in V$.
    \end{definition}
    In \cite{hl16o}, the authors proved that the Grothendieck rings $ K_0(\mathcal{O}^{\mathfrak{b},\pm}_{\ZZ})$ are isomorphic to a completion of a cluster algebra.
    \begin{definition}\label{def Ae}
        Let $\mathcal{A}_e$ be the cluster algebra with initial seed consisting of the quiver $\Gamma_e$ and initial cluster variables $\{z_{(i,r)}\}_{(i,r)\in V}$. We denote such seed by $\Sigma_e$.
    \end{definition}
    By Laurent phenomenon $\mathcal{A}_e$ is contained in $\ZZ[z_{(i,r)}^{\pm 1}]_{(i,r)\in V}$. 
    \begin{theorem}[{\cite[Theorem 4.2]{hl16o}}]\label{teorema cluster hl16}
        The assignment
        \begin{equation}
            z_{(i,r)}\mapsto \Big[-\frac{r}{2}\omega_i\Big]\Big[L_{i,q^r}^{\mathfrak{b},+}\Big],\qquad (i,r)\in V,
        \end{equation}
        defines an injective morphism of algebras
        \begin{equation}
            f:\mathcal{A}_e\hookrightarrow K_0(\mathcal{O}^{\mathfrak{b},+}_{\ZZ}), 
        \end{equation}
        and the topological closure of $f(\mathcal{A}_e)\otimes_{\ZZ}\mathcal{E}$ is the entire $ K_0(\mathcal{O}^{\mathfrak{b},+}_{\ZZ})$.
    \end{theorem}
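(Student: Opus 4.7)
The plan is to define $f$ on the initial seed $\Sigma_e$ by $z_{i,r}\mapsto [L_{i,q^r}^{\mathfrak{b},+}]$, verify that this assignment is compatible with every cluster mutation of $\mathcal{A}_e$, deduce injectivity from the injectivity of the $q$-character morphism, and finally show density in the completion by decomposing an arbitrary simple class $[L(\Psi)]$ into (possibly infinite) products of cluster monomials.

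The heart of the argument is to establish, for each vertex $(i,r)\in V$, a Baxter/$QQ$-type identity in $K_0(\mathcal{O}^{\mathfrak{b},+}_{\ZZ})$ of the form
\[ [L_{i,q^{r-1}}^{\mathfrak{b},+}]\,[L_{i,q^{r+1}}^{\mathfrak{b},+}] = [L_{i,q^{r}}^{\mathfrak{b},+}]\,[N_{i,r}] + \prod_{j \sim i}[L_{j,q^{r}}^{\mathfrak{b},+}], \]
where $N_{i,r}$ is a suitable Kirillov-Reshetikhin-type finite-dimensional module and the product is over the neighbours of $i$ in the Dynkin diagram $\gamma$. This identity corresponds exactly to mutation of $\Gamma_e$ at $(i,r)$: the two monomials on the right-hand side are read off from the incoming and outgoing arrows at $(i,r)$ in $\Gamma_e$. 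I would prove it by computing both sides on $q$-characters and invoking the injectivity of $\chi_q$; the underlying identity at the level of $\ell$-weights is a manipulation of rational functions whose roots and poles are powers of $q$, closely related to the Frenkel-Hernandez $QQ$-system of \cite{fh23}.

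Combined with the Laurent phenomenon $\mathcal{A}_e\subset \ZZ[z_{i,r}^{\pm 1}]_{(i,r)\in V}$, the first step extends $f$ uniquely to a ring morphism. Injectivity then reduces to the algebraic independence of the classes $[\Psi_{i,q^r}]$ inside $\mathcal{E}_{\ell}$, itself a direct consequence of Proposition \ref{iso q car}. For density, I would argue that any simple class $[L(\Psi)]$ with $\Psi\in\mathfrak{r}$ lies in the topological closure of $f(\mathcal{A}_e)$: the multiplicative factorization of $\Psi$ in the group $\mathfrak{r}$ into powers of the generating $\Psi_{j,q^r}^{\pm 1}$ lifts via fusion with appropriate finite-dimensional modules, which are themselves cluster variables reachable from $\Sigma_e$ by finite sequences of mutations. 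Summing the corresponding (possibly infinite) decomposition in the topology inherited from $\mathcal{E}_{\ell}$ produces the required limit.

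\textbf{Main obstacle.} The principal difficulty is the identification of the $QQ$-type exchange relation with mutation of $\Gamma_e$ at every vertex, which requires both a short exact sequence (or an identity in the Grothendieck ring) and a careful matching between the two sets of arrows at $(i,r)$ and the monomials on the right-hand side. A secondary but non-trivial obstacle is controlling convergence in the density step: one must verify that the chosen sequences of cluster monomials actually converge to $[L(\Psi)]$ in the topology on $K_0(\mathcal{O}^{\mathfrak{b},+}_{\ZZ})$ inherited from $\mathcal{E}_{\ell}$, and that no simple class is left out of the closure.
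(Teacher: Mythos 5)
Your overall plan---assign the prefundamental classes to the initial cluster, verify compatibility with exchange relations, deduce injectivity from the faithfulness of $\chi_q$, and finish with a density argument---does echo the strategy of \cite{hl16o}. However, the proposal has several genuine gaps.

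First, the identity you put at the heart of the argument is internally inconsistent with the indexing of $\Gamma_e$. If $(i,r)\in V$, then $(i,r\pm1)$ and $(j,r)$ for neighbours $j$ are \emph{not} in $V$ (wrong parity), so $[L_{i,q^{r\pm1}}^{\mathfrak{b},+}]$ and the product $\prod_{j\sim i}[L_{j,q^r}^{\mathfrak{b},+}]$ are not built from initial cluster variables. The actual exchange relation at $(i,r)$ in $\Gamma_e$ reads
\[
z_{i,r}\,z_{i,r}^{*} \;=\; z_{i,r-2}\prod_{j\sim i} z_{j,r+1} \;+\; z_{i,r+2}\prod_{j\sim i} z_{j,r-1},
\]
and the work is to identify $z_{i,r}^{*}$ with the class of a specific object of $\mathcal{O}^{\mathfrak{b},+}_{\ZZ}$, and to prove the identity there. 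The module you call $N_{i,r}$ is left unconstructed and is precisely what has to be supplied; it is not obvious what it is, and this is where the generalized Baxter relations of Frenkel--Hernandez and Hernandez--Jimbo enter. Verifying such a relation on $q$-characters is not a ``manipulation of rational functions'': one must know the $q$-characters of the simple modules on both sides, which is in itself a substantial result.

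Second, the reduction via Laurent phenomenon is more delicate than you make it. Laurent phenomenon gives $\mathcal{A}_e\subset\ZZ[z_{i,r}^{\pm1}]$, but the classes $[L_{i,q^r}^{\mathfrak{b},+}]$ are \emph{not} invertible in $K_0(\mathcal{O}^{\mathfrak{b},+}_{\ZZ})$ (the putative inverse is the class of a negative prefundamental, which is an object of $\mathcal{O}^{\mathfrak{b},-}$, not $\mathcal{O}^{\mathfrak{b},+}$). One must therefore extend $f$ with target a larger ring---for instance $\mathcal{E}_{\ell}$, where the $\Psi_{i,q^r}$ are invertible---and the real content of the theorem is that $f(\mathcal{A}_e)$ nevertheless lands back inside $K_0(\mathcal{O}^{\mathfrak{b},+}_{\ZZ})$. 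That step is not covered by algebraic independence; it requires control over all cluster variables, not merely the initial ones and one mutation step. Showing that a single mutation preserves the property of being a class in $K_0(\mathcal{O}^{\mathfrak{b},+}_{\ZZ})$ does not by itself set up an induction, because after a mutation the ambient seed no longer consists solely of prefundamental classes, so you need an analogous identity for these new variables as well. The published proof avoids this by comparing with the already established cluster structure on $K_0$ of the category of finite-dimensional modules (from earlier Hernandez--Leclerc work) on finite subquivers, together with the Baxter relations. Finally, the density step as written is too thin: one must argue both that the partial products you describe actually converge in the topology on $K_0(\mathcal{O}^{\mathfrak{b},+}_{\ZZ})$ inherited from $\mathcal{E}_{\ell}$ and that every simple class $[L(\Psi)]$ is realised as such a limit, not just a formal factorisation of its highest $\ell$-weight.
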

    \begin{remark}
        The topological closure of $f(\mathcal{A}_e)\otimes_{\ZZ}\mathcal{E}$ inside $K_0(\mathcal{O}^{\mathfrak{b},+}_{\ZZ})$ is denoted by $f(\mathcal{A}_e)\hat{\otimes}_{\ZZ}\mathcal{E}$, see \cite{hl16o} for more details. We will use an analogous notation for topological closures also in the rest of the paper. 
    \end{remark}
    By Theorem 5.17 in \cite{hl16o}, $ K_0(\mathcal{O}^{\mathfrak{b},+}_{\ZZ})$ and $ K_0(\mathcal{O}^{\mathfrak{b},-}_{\ZZ})$ are isomorphic algebras, thus an analogue of Theorem \ref{teorema cluster hl16} can be written for $ K_0(\mathcal{O}^{\mathfrak{b},-}_{\ZZ})$.
    \subsection{The quantum Grothendieck ring $K_t(\mathcal{O}^{\mathfrak{b},+}_{\ZZ})$}\label{qgrb}
    We recall here the main ingredients of the construction of a quantum \Groth ring for the category $\mathcal{O}^{\mathfrak{b},+}_{\ZZ}$ by Bittmann, see \cite{b21}.\\ The quantum Grothendieck ring $ K_t(\mathcal{O}^{\mathfrak{b},+}_{\ZZ})$ is realized as a quantum cluster algebra, hence it lives inside a quantum torus. In order to define the quantum torus, one needs some information about the so-called Cartan datum, that can be found in \cite[Section 3.2]{b21}. In particular, the entries of the inverse quantum Cartan matrix $\tilde{C}(z)$ ($z$ is an indeterminate) can be expressed as a power series in $z$ as follows.
    For $i,j\in I$,
    \begin{equation}
        \Tilde{C}_{i,j}=\sum_{m=1}^{\infty}\Tilde{C}_{i,j}(m)z^m\quad \in \ZZ[[z]].
    \end{equation}
    Furthermore, for all $i,j\in I$, let $\mathcal{F}_{i,j}:\ZZ\to\ZZ$ be the skew-symmetric map such that, for all $m\geq 0$
    \begin{equation}\label{F}
        \mathcal{F}_{i,j}(m)=-\sum_{\substack{k\geq 1 \\ m\geq 2k-1}}\Tilde{C}_{i,j}(m-2k+1).
    \end{equation}
    \begin{definition}\label{quasi toro lea}
        Let $T_{t,e}$ be the $\ZZ[t^{\pm 1}]$-algebra generated by the $z_{(i,r)}^{\pm 1}$, for $(i,r)\in V$, with non-commutative product $\ast_e$ and $t$-commutation relations
    \begin{equation}
        z_{(i,r)}\ast_e z_{(j,s)}=t^{\mathcal{F}_{i,j}(s-r)}z_{(j,s)}\ast_e z_{(i,r)},\qquad (i,r),(j,s)\in V.
    \end{equation}
    \end{definition}
    Consider now the scalars extension $\ZZ[t^{\pm 1/2}]\otimes_{\ZZ[t^{\pm 1}]}T_{t,e}$ that, by abuse of notation, we keep denoting by $T_{t,e}$.  
    The quantum torus considered in \cite{b21} is 
    \begin{equation}\label{toro di lea}
        \mathcal{T}_{t,e}:=T_{t,e}\hat{\otimes}_{\ZZ}\mathcal{E}.
    \end{equation}
    \begin{example}\label{f}
        For $\g=\sl_2$, $I=\{1\}$, Equation \eqref{F} simplifies, yielding
            \[z_{(1,r)}\ast z_{(1,s)}=t^{f(s-r)}z_{(1,s)}\ast z_{(1,r)},\qquad \forall s,r\in V=2\ZZ,\] where $f : \ZZ\to \ZZ$ is the skew-symmetric function defined by 
        \begin{equation}
            f\vert_{\NN} : m\mapsto \frac{(-1)^m-1}{2}.
        \end{equation}
    \end{example}

    We discuss now the link to quantum cluster algebras. We refer to the original paper \cite{bz05} by Berenstein and Zelevinsky for the definition and properties of these algebras. We give here only the definition of compatible pair, since it is crucial in the sequel. Let us denote by $\llbracket 1,m\rrbracket$ the interval of integers from $1$ to $m$.
    \begin{definition}\label{def compatibile}
       Let $\tilde{B}=(b_{ij})$ be a $m\times n$ integer matrix, with rows labeled by $\llbracket 1,m\rrbracket$ and columns labeled by $\mathbf{ex}\subset \llbracket 1,m\rrbracket$. Let $\Lambda=(\lambda_{ij})_{1\leq i,j\leq m}$ be a $m\times m$ skew-symmetric integer matrix. The couple $(\Lambda, \tilde{B})$ is said to be \emph{compatible} if, for all $i\in \mathbf{ex}$ and $1\leq j\leq m$, we have 
       \[\sum_{k=1}^m b_{ki}\lambda_{kj}=\delta_{ij}d_j,\] for some non-zero integers $d_j$, $j\in\mathbf{ex}$, such that $d_j>0$ for all $j$, or $d_j<0$ for all $j$. In other words, the $n\times m$ matrix $\tilde{D}=\tilde{B}^T\Lambda$ consists of two blocks: the $\mathbf{ex}\times\mathbf{ex}$ diagonal block $D$ with diagonal entries $d_j$ and a zero block.
    \end{definition}
    In particular, if $(\Lambda,\tilde{B})$ is a compatible pair, then $\tilde{B}$ has full rank, see \cite[Proposition 3.3]{bz05}. For completeness we give the definition of compatible pair in the infinite rank case.
    \begin{definition}
        Let $B$ be an infinite integer matrix with a finite number of non-zero entries on each row and column (we call these matrices \emph{locally finite}). Let $\Lambda$ be an infinite skew-symmetric integer matrix. Assume that the rows and columns of $B$ and $\Lambda$ are indexed by a set $J$. The couple $(\Lambda,B)$ is said to be \emph{compatible} if $B^T\Lambda=D$, where $D=\text{diag}(d_j)_{j\in J}$ is an infinite integer matrix such that $d_{j}>0$ for all $j$ in $J$ or $d_{j}<0$ for all $j$ in $J$. The product $B^T\Lambda$ is well defined because $B$ is locally finite.
    \end{definition}
    In \cite{b21} the author considers the same basic infinite quiver $\Gamma_e$ used \cite{hl16o}. Let us denote by $B_e$ its exchange matrix, which is a $V\times V$ skew-symmetric integer matrix. The quantization matrix $\Lambda_e$ encodes the $t$-commutation relations between the initial cluster variables $\{z_{(i,r)}\mid (i,r)\in V\}$. In \cite{b21} $\Lambda_e$ is defined as the $V\times V$ skew-symmetric matrix such that for all $s>r$ 
    \begin{equation}\label{lambda e}
        \Lambda_{(i,r),(j,s)}\coloneqq \mathcal{F}_{i,j}(s-r).
    \end{equation}
    \begin{proposition}[{\cite[Proposition 6.2.4]{b21}}] \label{compatibilita lea}
        The pair $(\Lambda_e,B_e)$ is compatible, in the sense of quantum cluster algebras \cite{bz05}. Moreover, 
        \[B_e^T\Lambda_e=-2\Id.\]
    \end{proposition}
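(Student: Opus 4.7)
Since in the infinite-rank setup of \cite{b21} there are no frozen indices, compatibility in the sense of Definition \ref{def compatibile} reduces to the stronger diagonal identity $B_e^T\Lambda_e=-2\,\mathrm{Id}$. The plan is to establish this equality directly by entry-by-entry computation, and deduce compatibility as an immediate corollary (with every $d_j=-2$).

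The first step is to write out $(B_e^T\Lambda_e)_{(i,r),(j,s)}$ explicitly. From the arrow rule for $\Gamma_e$, the only vertices $(u,v)\in V$ with $(B_e)_{(u,v),(i,r)}\neq 0$ are $(i,r\pm 2)$ and, for each $k\neq i$ with $c_{k,i}\neq 0$, the vertices $(k,r\pm 1)$; the signs are read off the orientations. Since $\tilde{C}$ is symmetric, the function $\mathcal{F}_{i,j}$ is symmetric in $(i,j)$ and skew-symmetric in its integer argument, so $\Lambda_e((u,v),(w,x))=\mathcal{F}_{u,w}(x-v)$ holds for every pair. Substituting and writing $m:=s-r$, one obtains
\[
(B_e^T\Lambda_e)_{(i,r),(j,s)}=\bigl(\mathcal{F}_{i,j}(m+2)-\mathcal{F}_{i,j}(m-2)\bigr)+\sum_{k\sim i,\,k\neq i}\bigl(\mathcal{F}_{k,j}(m-1)-\mathcal{F}_{k,j}(m+1)\bigr).
\]

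The second step is a telescoping reduction to entries of $\tilde{C}$. Directly from \eqref{F}, $\mathcal{F}_{i,j}(m+2)-\mathcal{F}_{i,j}(m)=-\tilde{C}_{i,j}(m+1)$ for $m\geq 0$; combined with skew-symmetry in $m$, this closes each of the four differences above. For $m\neq 0$ the expression collapses to
\[
-\bigl(\tilde{C}_{i,j}(m+1)+\tilde{C}_{i,j}(m-1)\bigr)+\sum_{k\sim i,\,k\neq i}\tilde{C}_{k,j}(m),
\]
while for $m=0$ it reduces to $2\mathcal{F}_{i,j}(2)=-2\tilde{C}_{i,j}(1)$, since $\mathcal{F}_{k,j}(\pm 1)=0$. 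The third step then invokes the defining identity $C(z)\tilde{C}(z)=I$: extracting the coefficient of $z^{m+1}$ from the simply-laced form $(z^2+1)\tilde{C}_{i,j}(z)-z\sum_{k\sim i,\,k\neq i}\tilde{C}_{k,j}(z)=z\,\delta_{i,j}$ gives
\[
\tilde{C}_{i,j}(m+1)+\tilde{C}_{i,j}(m-1)-\sum_{k\sim i,\,k\neq i}\tilde{C}_{k,j}(m)=\delta_{i,j}\,\delta_{m,0}.
\]
This exactly cancels the combination from the previous step for every $m\neq 0$, while at $m=0$ it yields $\tilde{C}_{i,j}(1)=\delta_{i,j}$, producing the target value $-2\delta_{i,j}$. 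Altogether $(B_e^T\Lambda_e)_{(i,r),(j,s)}=-2\,\delta_{(i,r),(j,s)}$, which is the claim.

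The main obstacle is the careful boundary bookkeeping: the telescoping identity for $\mathcal{F}$ holds naturally only for $m\geq 0$ because $\tilde{C}$ lives in $\ZZ[[z]]$, so the small regimes $|m|\leq 2$ must be treated either by explicit case analysis or, more elegantly, by first extending $\tilde{C}$ skew-symmetrically to all of $\ZZ$ and matching with the skew-symmetry of $\mathcal{F}$ so that a single uniform identity covers every $m\in\ZZ$. Once this is set up cleanly, the reduction above proceeds mechanically and compatibility follows at once from Definition \ref{def compatibile}.
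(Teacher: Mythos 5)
The paper does not prove this statement; it simply cites \cite[Proposition 6.2.4]{b21}, so there is no proof in this paper to compare against. Your direct entry-by-entry verification is correct in substance: the reduction of $(B_e^T\Lambda_e)_{(i,r),(j,s)}$ to $\mathcal{F}_{i,j}(m+2)-\mathcal{F}_{i,j}(m-2)+\sum_{k\sim i}(\mathcal{F}_{k,j}(m-1)-\mathcal{F}_{k,j}(m+1))$ with $m=s-r$, the telescoping $\mathcal{F}_{i,j}(m+2)-\mathcal{F}_{i,j}(m)=-\tilde{C}_{i,j}(m+1)$ for $m\geq 0$, and the use of the coefficient identity $\tilde{C}_{i,j}(m+1)+\tilde{C}_{i,j}(m-1)-\sum_{k\sim i}\tilde{C}_{k,j}(m)=\delta_{ij}\delta_{m,0}$ from $(z^2+1)\tilde{C}(z)-z A\tilde{C}(z)=z\,\mathrm{Id}$ all go through, and the $m=0$ case correctly produces $-2\tilde{C}_{i,j}(1)=-2\delta_{ij}$. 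This is almost certainly the same computation carried out in \cite{b21}.

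One remark in your last paragraph is imprecise. The cleanest way to avoid re-doing the small-$|m|$ cases is not to extend $\tilde{C}$ skew-symmetrically (that extension does not actually make $\mathcal{F}(a+2)-\mathcal{F}(a)=-\tilde{C}(a+1)$ hold for negative $a$; at $a=-2$ it would instead require $\tilde{C}(-1)=\tilde{C}(1)$, a \emph{symmetric} extension). The simpler observation is that the bracketed expression is \emph{even} in $m$ by the skew-symmetry of $\mathcal{F}$ alone, so it suffices to treat $m\geq 0$, where your telescoping and the $\ZZ[[z]]$ convention $\tilde{C}_{i,j}(l)=0$ for $l\leq 0$ are enough. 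With that correction, the case analysis you carried out for $m=0$ together with the uniform computation for $m\geq 1$ already completes the argument.
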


     Let $\mathcal{A}_{t,e}$ be the quantum cluster algebra associated with the mutation-equivalence class of $(B_e,\Lambda_e)$. By quantum Laurent phenomenon \cite[Corollary 5.2]{bz05}, $\mathcal{A}_{t,e}\subset T_{t,e}$.
    \begin{definition}\cite[Definition 6.3.4]{b21}\label{def quantum gr ring lea}
        The quantum Grothendieck ring for the category $\mathcal{O}^{\mathfrak{b},+}_{\ZZ}$ is defined as
        \[ K_t(\mathcal{O}^{\mathfrak{b},+}_{\ZZ}):=\mathcal{A}_{t,e}\hat{\otimes}\mathcal{E},\] where the completed tensor product has been defined in \eqref{toro di lea}. Thus, $ K_t(\mathcal{O}^{\mathfrak{b},+}_{\ZZ})$ is a $\mathcal{E}[t^{\pm 1/2}]$-subalgebra of $\mathcal{T}_{t,e}$.
    \end{definition}
    A crucial point of Bittmann's result is that her construction is compatible with the quantum \Groth ring $ K_t(\mathcal{C}_{\ZZ}^-)$, where $\mathcal{C}^-$ is a full subcategory of the category of finite-dimensional $\mathcal{U}_q(\hat{\g})$-modules $\mathcal{C}$:
    \begin{theorem}[{\cite[Corollary 7.3.5]{b21b}}]\label{inclusione tori hl e lea}
        There is an inclusion of rings:
        \[ K_t(\mathcal{C}_{\ZZ}^-)\subset  K_t(\mathcal{O}^{\mathfrak{b},+}_{\ZZ}).\]
    \end{theorem}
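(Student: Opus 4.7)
The plan is to build the inclusion by matching the quantum cluster algebra structures on both sides and exploiting the fact that $\mathcal{C}_{\ZZ}^-$ sits inside $\mathcal{O}^{\mathfrak{b},+}_{\ZZ}$ as a monoidal subcategory. At the classical level, Hernandez--Leclerc equipped $K_0(\mathcal{C}_{\ZZ}^-)$ with a cluster algebra structure whose initial cluster variables are classes of Kirillov--Reshetikhin modules, and there is a ring injection $K_0(\mathcal{C}_{\ZZ}^-)\hookrightarrow K_0(\mathcal{O}^{\mathfrak{b},+}_{\ZZ})$ whose image consists of explicit Laurent polynomials in the classes $[L_{i,q^r}^{\mathfrak{b},+}]$ via the truncated $q$-character formulas. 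The strategy is to lift this to the quantum level inside the ambient torus $\mathcal{T}_{t,e}$.

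First I would fix an initial seed of $K_t(\mathcal{C}_{\ZZ}^-)$ (say indexed by Kirillov--Reshetikhin modules with parameters in $V$) and write its generators as explicit $t$-ordered Laurent monomials $M_{i,r}(t)\in\mathcal{T}_{t,e}$ in the variables $z_{j,s}$, using the classical formulas from Step~1. Next I would verify that these monomials satisfy the same $t$-commutation relations as the generators of $K_t(\mathcal{C}_{\ZZ}^-)$, that is, that the quantization matrix of \cite{b21b} coincides with the pullback of $\Lambda_e$ along the map $[W] \mapsto M(t)$. Both quantization matrices are encoded by the inverse quantum Cartan matrix $\tilde{C}(z)$, so this reduces to a combinatorial identity involving the skew-symmetric function $\mathcal{F}_{i,j}$ of \eqref{F}.

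Once the initial quantum seeds are identified, Proposition \ref{compatibilita lea} and the quantum Laurent phenomenon guarantee that the entire quantum cluster algebra associated with the initial seed for $K_t(\mathcal{C}_{\ZZ}^-)$ sits inside $\mathcal{A}_{t,e}$, and mutation compatibility propagates the inclusion to every other seed. Finally, passing to the completion used in Definition \ref{def quantum gr ring lea} preserves the inclusion, because the topology on $K_t(\mathcal{O}^{\mathfrak{b},+}_{\ZZ})$ restricts to the natural topology on $K_t(\mathcal{C}_{\ZZ}^-)$ generated by classes of simple modules.

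The main obstacle is the matching of the quantization matrices. The quantization on $K_t(\mathcal{C}_{\ZZ}^-)$ is built (following Nakajima / Varagnolo--Vasserot) from the power series expansion of $\tilde{C}(z)$ evaluated on pairs of spectral parameters of KR modules, while the quantization on $K_t(\mathcal{O}^{\mathfrak{b},+}_{\ZZ})$ is built from the same $\tilde{C}(z)$ but on pairs of prefundamental parameters. After the change of variables from KR classes to monomials in prefundamental classes, the matching becomes a telescoping identity on the coefficients $\tilde{C}_{i,j}(m)$, and keeping track of the grading shifts coming from the truncation of $q$-characters is the delicate bookkeeping step of the argument.
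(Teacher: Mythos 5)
The paper does not prove this statement. It is quoted verbatim from Bittmann's work (cited as Corollary 7.3.5 of \cite{b21b}) and used as a black-box background result, so there is no in-paper argument to compare against. That said, your sketch reproduces the broad architecture of Bittmann's proof: express the generators of the Nakajima--Varagnolo--Vasserot quantum torus underlying $K_t(\mathcal{C}_{\ZZ}^-)$ as $t$-commutative Laurent monomials in the $z_{i,r}$, verify that the two quantization bilinear forms---both built from the inverse quantum Cartan matrix $\tilde{C}(z)$---agree under that substitution, and then propagate the inclusion of tori to an inclusion of quantum cluster algebras via the quantum Laurent phenomenon and the compatible completions.

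Two points to sharpen if you write this out in full. First, the relevant monomial rewriting happens at the level of the torus generators rather than the KR classes: the torus for $K_t(\mathcal{C}_{\ZZ}^-)$ is generated by the $Y_{i,r}^{\pm 1}$, and it is $Y_{i,r}$ that becomes a commutative monomial of the form $[\omega_i]\,z_{i,r-1}z_{i,r+1}^{-1}$; the KR initial cluster variables are then commutative monomials in these, so the cluster-algebraic part of the inclusion follows once the torus embedding is established. Second, the ``telescoping identity'' you anticipate at the end is really already encoded in the definition \eqref{F} of $\mathcal{F}_{i,j}$, which is a discrete antiderivative of $\tilde{C}_{i,j}$: applying the second difference to $\mathcal{F}_{i,j}(s-r)$ under the substitution above recovers the Nakajima commutation exponent directly, so the matching of quantization matrices is a short computation rather than a separate combinatorial lemma. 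Neither of these observations changes your overall strategy, which is sound.
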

    \begin{remark}
    The definition of the quantum Grothendieck ring $K_t(\mathcal{O}^{\mathfrak{b},-})$ is given in \cite[Sections 4.1.2, 4.2.2]{theseBittmann}. For our purposes it is sufficient to recall that $K_t(\mathcal{O}^{\mathfrak{b},-})$ is isomorphic to $K_t(\mathcal{O}^{\mathfrak{b},+})$. This is what we would expect, since at the classical level we have that $K_0(\mathcal{O}^{\mathfrak{b},+})$ is isomorphic to $K_0(\mathcal{O}^{\mathfrak{b},-})$.
    \end{remark}
    \subsection{Geiss--Hernandez--Leclerc construction}\label{ghl construction}
    Recall that $s_i$, $i\in I$, denotes a simple reflection in the Weyl group of $\g$. First, we recall the construction of the quiver $\Gamma_c$. 
    \subsubsection{The cluster algebra $\mathcal{A}_{w_0}$}
    
    \begin{definition}\cite[Definition 3.1]{ghl}\label{procedura ghl}
        Consider the basic infinite quiver $\Gamma_e$ with vertex set $V$. For $(i,r)\in V$, we obtain $\Gamma_{s_i,r}$ from $\Gamma_e$ following the procedure:
        \begin{enumerate}
            \item insert a new vertex $\ast$ between vertices $(i,r)$ and $(i,r-2)$;\\
            \item replace the arrow $(i,r)\leftarrow (i,r-2)$ by a pair of arrows $(i,r)\rightarrow \ast \leftarrow (i,r-2)$;\\
            \item for $j$ with $c_{i,j}<0$, replace the arrow $(i,r)\rightarrow (j, r+c_{i,j})$ by an arrow $\ast \rightarrow (j,r+c_{i,j})$;\\
            \item change the labels of the vertices on the lower half of column $i$ as follows:
            \[\ast\mapsto (i,r-2),\qquad (i,r-2k)\mapsto (i,r-2k-2)\text{ for }k\geq 1.\]
        \end{enumerate}
    \end{definition}
    To emphasize the roles of vertices $(i,r)$ and $(i,r-2)$, we paint $(1,r)$ in red and $(1,r-2)$ in green. 
        \begin{figure}[H]
        \centering
        
    \[\begin{tikzcd}
    	\vdots & \vdots & \vdots \\
    	{(1,2)} & {} & {(3,2)} \\
    	& {(2,1)} \\
    	{\color{red}{(1,0)}} && {(3,0)} \\
    	{\color{green}{(1,-2)}} \\
    	& {(2,-1)} \\
    	{(1,-4)} && {(3,-2)} \\
    	& {(2,-3)} \\
    	{(1,-6)} & {} & {(3,-4)} \\
    	\vdots & \vdots & \vdots
    	\arrow[from=2-1, to=1-1]
    	\arrow[from=2-1, to=3-2]
    	\arrow[from=2-3, to=1-3]
    	\arrow[from=2-3, to=3-2]
    	\arrow[from=3-2, to=1-2]
    	\arrow[from=3-2, to=4-1]
    	\arrow[from=3-2, to=4-3]
    	\arrow[from=4-1, to=2-1]
    	\arrow[from=4-1, to=5-1]
    	\arrow[from=4-3, to=2-3]
    	\arrow[from=4-3, to=6-2]
    	\arrow[from=5-1, to=6-2]
    	\arrow[from=6-2, to=3-2]
    	\arrow[from=6-2, to=7-1]
    	\arrow[from=6-2, to=7-3]
    	\arrow[from=7-1, to=5-1]
    	\arrow[from=7-1, to=8-2]
    	\arrow[from=7-3, to=4-3]
    	\arrow[from=7-3, to=8-2]
    	\arrow[from=8-2, to=6-2]
    	\arrow[from=8-2, to=9-1]
    	\arrow[from=8-2, to=9-3]
    	\arrow[from=9-1, to=7-1]
    	\arrow[from=9-3, to=7-3]
    	\arrow[from=10-1, to=9-1]
    	\arrow[from=10-2, to=8-2]
    	\arrow[from=10-3, to=9-3]
    \end{tikzcd}\]
        \caption{The quiver $\Gamma_{s_1,0}$ in type $A_3$.}
        \label{fig:enter-label}
    \end{figure}
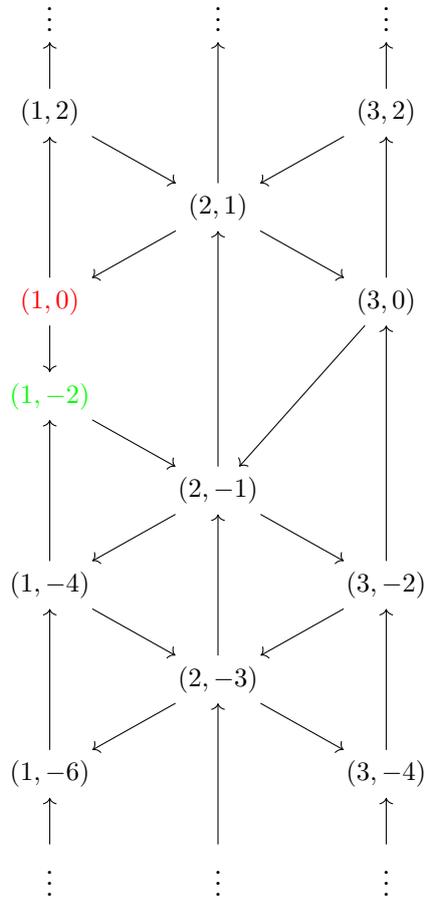
    
    Moreover, as stated in \cite[Lemma 3.2]{ghl}, the mutation of $\Gamma_{s_i,r}$ at vertex $(i,r)$ produces the quiver $\Gamma_{s_i,r+2}$, and the mutation of $\Gamma_{s_i,r}$ at vertex $(i,r-2)$ produces the quiver $\Gamma_{s_i,r-2}$.

    \begin{figure}[H]\label{vari quiver per sl2}
        \centering
        \begin{tikzcd}
    	\vdots &&& \vdots &&& \vdots \\
    	\\
    	2 &&& 2 &&& \color{red}2 \\
    	\\
    	\color{red}0 &&& 0 &&& \color{green}0 \\
    	\\
    	\color{green}{-2} &&& \color{red}{-2} &&& {-2} \\
    	{} \\
    	{-4} &&& \color{green}{-4} &&& {-4} \\
    	\\
    	\vdots &&& \vdots &&& \vdots
    	\arrow[from=3-1, to=1-1]
    	\arrow[from=3-4, to=1-4]
    	\arrow[from=3-7, to=1-7]
    	\arrow[from=3-7, to=5-7]
    	\arrow[from=5-1, to=3-1]
    	\arrow[from=5-1, to=7-1]
    	\arrow[from=5-4, to=3-4]
    	\arrow[from=7-4, to=5-4]
    	\arrow[from=7-4, to=9-4]
    	\arrow[from=7-7, to=5-7]
    	\arrow[from=9-1, to=7-1]
    	\arrow[from=9-7, to=7-7]
    	\arrow[from=11-1, to=9-1]
    	\arrow[from=11-4, to=9-4]
    	\arrow[from=11-7, to=9-7]
    \end{tikzcd}
        \caption{In type $A_1$, $w_0=s$, the simple reflection. From left to right: the quivers $\Gamma_{s,0}$, $\Gamma_{s,-2}$ and $\Gamma_{s,2}$.}
        \label{fig:alcuni quiver A1}
    \end{figure}
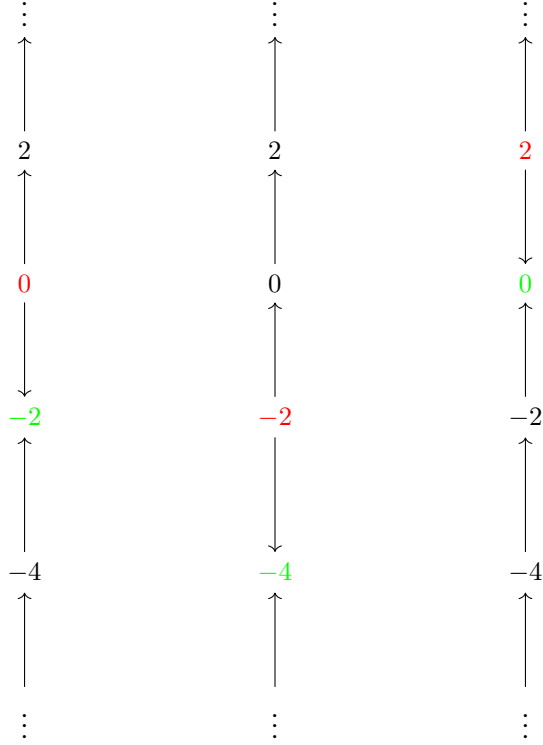
    Next, let $\mathbf{Q}$ be a Dynkin quiver of the same type as $\g$. Note that the basic infinite quiver $\Gamma_e$ is isomorphic to the Auslander-Reiten quiver of the bounded derived category $D^{b}(K\mathbf{Q})$ of the path algebra $K\mathbf{Q}$, over a field $K$, where the vertical arrows in $\Gamma_e$ correspond to Auslander--Reiten translations. For $i\in I$, let $s_i(\mathbf{Q})$ be the quiver obtained from $\mathbf{Q}$ by reversing all the arrows incident at vertex $i$. Consider a reduced word $w=s_{i_1}\cdots s_{i_k}\in W$. The expression $(i_1,\ldots, i_k)$ is adapted to $\mathbf{Q}$ if $i_k$ is a source for $\mathbf{Q}$, $i_{k-1}$ is a source for $s_{i_k}(\mathbf{Q})$, and so on. Then, for every $c=s_{i_1}\cdots s_{i_n}$ Coxeter element of $W$, there is a unique $\mathbf{Q}$ such that $(i_1,\ldots, i_n)$ is adapted to $\mathbf{Q}$. 
    \begin{definition}\label{quiver gamma c}
        Fix $c$ a Coxeter element of $W$ and consider the corresponding quiver $\mathbf{Q}$. We can identify the Auslander--Reiten quiver of the abelian category $\operatorname{mod}(K\mathbf{Q})$ with a finite full subquiver $G_c$ of $\Gamma_e$. Now, at each vertex of $G_c$ we perform the procedure described in Definition \ref{procedura ghl}. We denote the resulting quiver $\Gamma_c$. The choice of $G_c$ in $\Gamma_e$ is not unique, so the same goes for $\Gamma_c$, but in fact different choices give quivers that are all mutation equivalent, see \cite[Proposition 3.4]{ghl}. 
    \end{definition}
    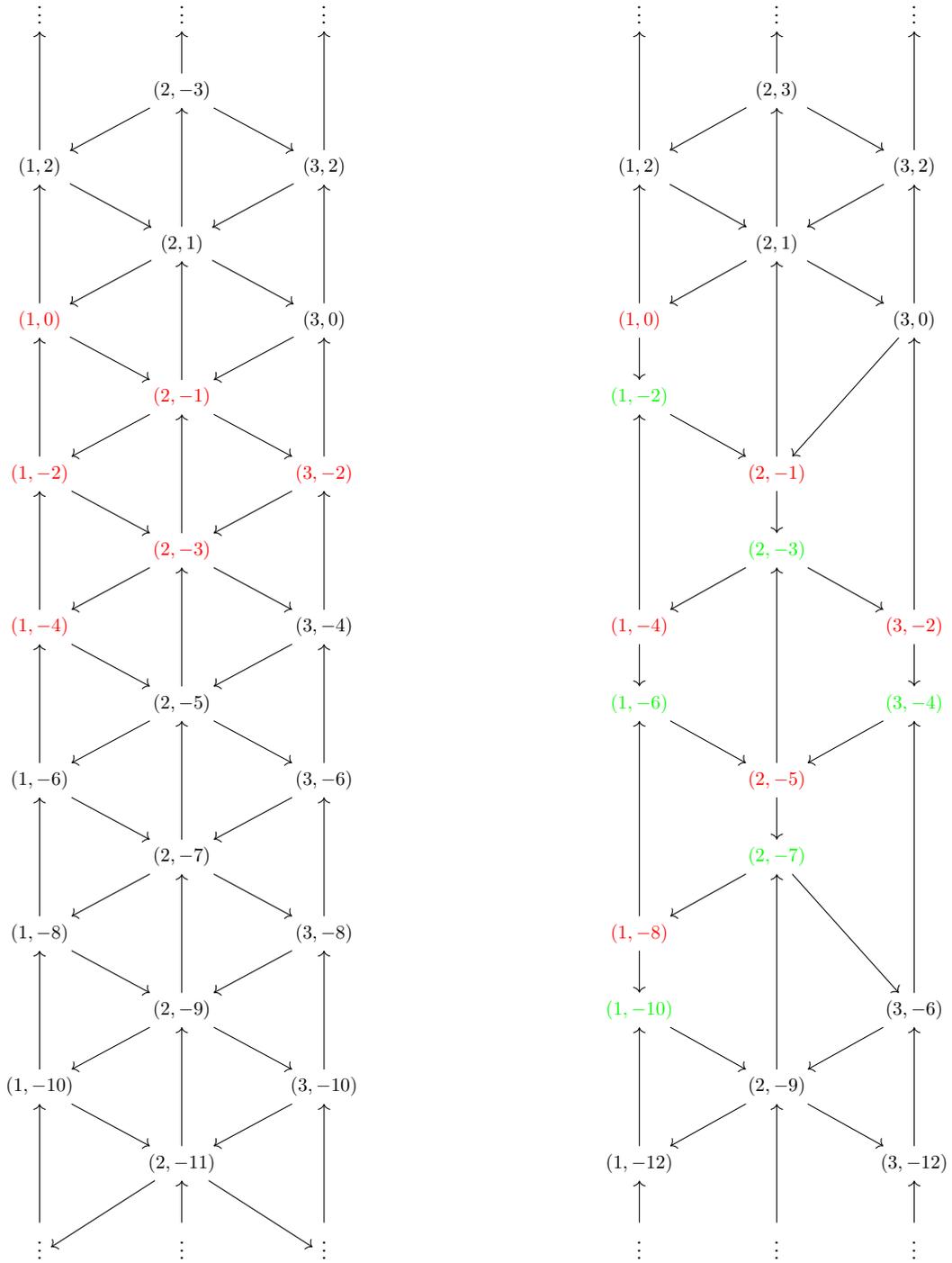
\begin{figure}[H]
        \centering
        \begin{tikzcd}[scale cd=.8]
    	\vdots & \vdots & \vdots &&&& \vdots & \vdots & \vdots \\
    	& {(2,-3)} &&&&&& {(2,3)} \\
    	{(1,2)} && {(3,2)} &&&& {(1,2)} && {(3,2)} \\
    	& {(2,1)} &&&&&& {(2,1)} \\
    	{\color{blue}{(1,0)}} && {(3,0)} &&&& {\color{red}{(1,0)}} && {(3,0)} \\
    	& {\color{blue}{(2,-1)}} &&&&& {\color{green}{(1,-2)}} \\
    	{\color{blue}{(1,-2)}} && {\color{blue}{(3,-2)}} &&&&& {\color{red}{(2,-1)}} \\
    	& {\color{blue}{(2,-3)}} &&&&&& {\color{green}{(2,-3)}} \\
    	{\color{blue}{(1,-4)}} && {(3,-4)} &&&& {\color{red}{(1,-4)}} && {\color{red}{(3,-2)}} \\
    	& {(2,-5)} &&&&& {\color{green}{(1,-6)}} && {\color{green}{(3,-4)}} \\
    	{(1,-6)} & {} & {(3,-6)} &&&&& {\color{red}{(2,-5)}} \\
    	& {(2,-7)} &&&&&& {\color{green}{(2,-7)}} \\
    	{(1,-8)} && {(3,-8)} &&&& {\color{red}{(1,-8)}} \\
    	& {(2,-9)} &&&&& {\color{green}{(1,-10)}} && {(3,-6)} \\
    	{(1,-10)} && {(3,-10)} &&&&& {(2,-9)} \\
    	& {(2,-11)} &&&&& {(1,-12)} && {(3,-12)} \\
    	\vdots & \vdots & \vdots &&&& \vdots & \vdots & \vdots
    	\arrow[from=2-2, to=1-2]
    	\arrow[from=2-2, to=3-1]
    	\arrow[from=2-2, to=3-3]
    	\arrow[from=2-8, to=1-8]
    	\arrow[from=2-8, to=3-7]
    	\arrow[from=2-8, to=3-9]
    	\arrow[from=3-1, to=1-1]
    	\arrow[from=3-1, to=4-2]
    	\arrow[from=3-3, to=1-3]
    	\arrow[from=3-3, to=4-2]
    	\arrow[from=3-7, to=1-7]
    	\arrow[from=3-7, to=4-8]
    	\arrow[from=3-9, to=1-9]
    	\arrow[from=3-9, to=4-8]
    	\arrow[from=4-2, to=2-2]
    	\arrow[from=4-2, to=5-1]
    	\arrow[from=4-2, to=5-3]
    	\arrow[from=4-8, to=2-8]
    	\arrow[from=4-8, to=5-7]
    	\arrow[from=4-8, to=5-9]
    	\arrow[from=5-1, to=3-1]
    	\arrow[from=5-1, to=6-2]
    	\arrow[from=5-3, to=3-3]
    	\arrow[from=5-3, to=6-2]
    	\arrow[from=5-7, to=3-7]
    	\arrow[from=5-7, to=6-7]
    	\arrow[from=5-9, to=3-9]
    	\arrow[from=5-9, to=7-8]
    	\arrow[from=6-2, to=4-2]
    	\arrow[from=6-2, to=7-1]
    	\arrow[from=6-2, to=7-3]
    	\arrow[from=6-7, to=7-8]
    	\arrow[from=7-1, to=5-1]
    	\arrow[from=7-1, to=8-2]
    	\arrow[from=7-3, to=5-3]
    	\arrow[from=7-3, to=8-2]
    	\arrow[from=7-8, to=4-8]
    	\arrow[from=7-8, to=8-8]
    	\arrow[from=8-2, to=6-2]
    	\arrow[from=8-2, to=9-1]
    	\arrow[from=8-2, to=9-3]
    	\arrow[from=8-8, to=9-7]
    	\arrow[from=8-8, to=9-9]
    	\arrow[from=9-1, to=7-1]
    	\arrow[from=9-1, to=10-2]
    	\arrow[from=9-3, to=7-3]
    	\arrow[from=9-3, to=10-2]
    	\arrow[from=9-7, to=6-7]
    	\arrow[from=9-7, to=10-7]
    	\arrow[from=9-9, to=5-9]
    	\arrow[from=9-9, to=10-9]
    	\arrow[from=10-2, to=8-2]
    	\arrow[from=10-2, to=11-1]
    	\arrow[from=10-2, to=11-3]
    	\arrow[from=10-7, to=11-8]
    	\arrow[from=10-9, to=11-8]
    	\arrow[from=11-1, to=9-1]
    	\arrow[from=11-1, to=12-2]
    	\arrow[from=11-3, to=9-3]
    	\arrow[from=11-3, to=12-2]
    	\arrow[from=11-8, to=8-8]
    	\arrow[from=11-8, to=12-8]
    	\arrow[from=12-2, to=10-2]
    	\arrow[from=12-2, to=13-1]
    	\arrow[from=12-2, to=13-3]
    	\arrow[from=12-8, to=13-7]
    	\arrow[from=12-8, to=14-9]
    	\arrow[from=13-1, to=11-1]
    	\arrow[from=13-1, to=14-2]
    	\arrow[from=13-3, to=11-3]
    	\arrow[from=13-3, to=14-2]
    	\arrow[from=13-7, to=10-7]
    	\arrow[from=13-7, to=14-7]
    	\arrow[from=14-2, to=12-2]
    	\arrow[from=14-2, to=15-1]
    	\arrow[from=14-2, to=15-3]
    	\arrow[from=14-7, to=15-8]
    	\arrow[from=14-9, to=10-9]
    	\arrow[from=14-9, to=15-8]
    	\arrow[from=15-1, to=13-1]
    	\arrow[from=15-1, to=16-2]
    	\arrow[from=15-3, to=13-3]
    	\arrow[from=15-3, to=16-2]
    	\arrow[from=15-8, to=12-8]
    	\arrow[from=15-8, to=16-7]
    	\arrow[from=15-8, to=16-9]
    	\arrow[from=16-2, to=14-2]
    	\arrow[from=16-2, to=17-1]
    	\arrow[from=16-2, to=17-3]
    	\arrow[from=16-7, to=14-7]
    	\arrow[from=16-9, to=14-9]
    	\arrow[from=17-1, to=15-1]
    	\arrow[from=17-2, to=16-2]
    	\arrow[from=17-3, to=15-3]
    	\arrow[from=17-7, to=16-7]
    	\arrow[from=17-8, to=15-8]
    	\arrow[from=17-9, to=16-9]
    \end{tikzcd}
        \caption{On the left: in blue the subquiver $G_c$ for $c=s_1s_2s_3$ in type $A_3$. On the right: an example of quiver $\Gamma_c$.}
        \label{fig:quiver type A3}
    \end{figure}
    \begin{definition}\label{def A w0}
    Let $\mathcal{A}_{w_0}$ be the cluster algebra with an initial seed given by the quiver $\Gamma_c$. 
    \end{definition}
    \begin{remark}
        Note that the construction in Definition \ref{quiver gamma c} is proposed in \cite[Section 3.4]{ghl} as an alternative recipe to the main one, presented in Section 3.3 of the same paper, to obtain an initial quiver for $\mathcal{A}_{w_0}$. Then, for any $c$, $c'$ Coxeter elements in $W$, the quivers $\Gamma_c$ and $\Gamma_{c'}$ are mutation equivalent.
    \end{remark}
    We will use the following definition.
    \begin{definition}
        Let $\mathcal{A}$ and $\mathcal{A}'$ be two cluster algebras. We say that a map $f:\mathcal{A}\to\mathcal{A}'$ is an \emph{embedding of cluster algebras} if 
        \begin{itemize}
            \item $f$ is an injective ring morphism;
            \item for every cluster variable $x$ in $\mathcal{A}$, $f(x)$ is a cluster variables in $\mathcal{A}'$.
        \end{itemize}
   \end{definition}

    \begin{remark}\label{remark sullamutazione infinita}
        The sequence of mutations at each green vertex of $\Gamma_c$, produces a quiver which is just a one step downward translation of the initial one (note that the order of these mutations is irrelevant since they commute with each other). Hence, repeating this process \emph{infinitely many times}, we can push down the portion of the quiver consisting of green and red vertices (that we will sometimes call the ``irregular'' part) and recover the quiver $\Gamma_e$. Thus, we will often refer to $\Gamma_e$ as the "limit" quiver and to $\Sigma_e$ (see Definition \ref{def Ae}) as the \emph{limit} reference seed. However, this limit procedure is irreversible, in the sense that there is no way to obtain the quiver $\Gamma_c$ from the quiver $\Gamma_e$ by mutation.
    \end{remark}
    Since in $\mathcal{A}_e$ any cluster variable is obtained from the initial ones by a finite sequence of mutations, in particular it can be obtained applying the same sequence of mutations to one of the initial seeds of $\mathcal{A}_{w_0}$. This means that each cluster variable of $\mathcal{A}_e$ corresponds to a cluster variable of $\mathcal{A}_{w_0}$. Thus we have the following.
    \begin{proposition} There is an embedding of cluster algebras
        \begin{equation}
            \mathcal{A}_e\hookrightarrow \mathcal{A}_{w_0}.
        \end{equation} 
    \end{proposition}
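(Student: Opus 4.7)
The strategy is to realize each cluster variable of $\mathcal{A}_e$ as a cluster variable of $\mathcal{A}_{w_0}$ via the limit procedure of Remark \ref{remark sullamutazione infinita}. Any $x \in \mathcal{A}_e$ is produced from the initial seed $\Sigma_e$ by a finite mutation sequence $\mathbf{k}=(k_1,\ldots,k_s)$ touching only finitely many vertices of $V$. The plan is, for each such $x$, to construct a seed of $\mathcal{A}_{w_0}$ on which the very same sequence $\mathbf{k}$ is legal and yields the same rational function in the corresponding cluster variables.

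The key observation is that the green-vertex mutations appearing in the ``one-step downward translation'' of Remark \ref{remark sullamutazione infinita} pairwise commute, so any finite subset of them defines an unambiguous composition of mutations. Because mutation is a local operation, in order to force a finite upper subquiver $F \subset V$ of some seed of $\mathcal{A}_{w_0}$ to look identical to the corresponding portion of $\Gamma_e$, one only needs to perform those green-vertex mutations lying in, or adjacent to, the irregular strip that obstructs $F$. Concretely I would fix a finite $F$ containing the support of $\mathbf{k}$ together with all its $\Gamma_e$-neighbors, select a finite number of green-vertex mutations of $\Sigma_c$ that pushes the irregular part of $\Gamma_c$ strictly below $F$, and thus obtain a seed $\Sigma'$ of $\mathcal{A}_{w_0}$ whose full subquiver on (the vertices corresponding to) $F$ is identical to the restriction of $\Sigma_e$ to $F$, both as a labeled quiver and in terms of attached cluster variables. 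Applying $\mathbf{k}$ to $\Sigma'$ then produces the same Laurent polynomial in $\{z_{i,r}\}_{(i,r)\in F}$ as applying $\mathbf{k}$ to $\Sigma_e$.

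This assignment extends to a ring morphism $\mathcal{I}\colon \mathcal{A}_e \to \mathcal{A}_{w_0}$ by sending $z_{i,r}$ to the cluster variable of $\mathcal{A}_{w_0}$ attached, in a sufficiently downward-translated seed $\Sigma'$, to the vertex identified with $(i,r)$. Exchange relations are preserved because mutation formulas depend only on local data, and that local data agrees on $F$ between $\Sigma'$ and $\Sigma_e$. Injectivity is forced by the Laurent phenomenon applied to both cluster algebras: $\mathcal{A}_e \hookrightarrow \ZZ[z_{i,r}^{\pm 1}]$, and the $\mathcal{I}(z_{i,r})$ are algebraically independent since they all sit in a common cluster of $\mathcal{A}_{w_0}$; any element of $\mathcal{A}_e$ vanishing under $\mathcal{I}$ must therefore already be zero in $\mathcal{A}_e$.

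The main technical obstacle I anticipate is justifying the finiteness of the required number of green-vertex mutations: one must verify by an inductive analysis of the downward translation that, after finitely many such mutations, the red/green part of $\Gamma_c$ has been shifted strictly below any prescribed finite region $F$, so that on $F$ one literally sees the regular $\Gamma_e$-structure with the correct cluster variables. A secondary subtlety is proving that $\mathcal{I}(z_{i,r})$ is well defined, i.e.\ independent of the choice of $F$ and of the finite sequence of downward translations used; this should follow from the commutativity of the green-vertex mutations together with \cite[Proposition 3.5]{ghl} ensuring that different initial presentations give the same cluster algebra, but needs to be verified explicitly.
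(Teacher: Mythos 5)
Your approach is essentially the paper's: both realize a cluster variable of $\mathcal{A}_e$ inside a seed $\Gamma_c^{(m)}$ of $\mathcal{A}_{w_0}$ (obtained from $\Gamma_c$ by finitely many green-vertex mutations), observe that the finite subquiver supporting the mutation sequence agrees with the corresponding full subquiver of $\Gamma_e$, and conclude that the exchange relations transport verbatim. Two remarks. First, the ``main technical obstacle'' you flag is not one: by construction each step $\Gamma_c^{(k)}\to\Gamma_c^{(k+1)}=\mu_{grn}^{(k)}(\Gamma_c^{(k)})$ is a finite composite of mutations (the set $V_{grn}^{(k)}$ of green vertices is finite, of cardinality the length of $w_0$), so finitely many steps push the red/green strip strictly below any prescribed finite $F$. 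Second, your phrase ``the $\mathcal{I}(z_{i,r})$ are algebraically independent since they all sit in a common cluster of $\mathcal{A}_{w_0}$'' should be corrected: they do \emph{not} all lie in a single cluster; rather each finite subfamily lies in some seed $\Gamma_c^{(m)}$, which suffices because an algebraic relation involves only finitely many of them. The paper phrases both the independence and the well-definedness argument via uniqueness of the induced morphism on fraction fields combined with the Laurent phenomenon, which cleanly disposes of the ``independence of choices'' subtlety you raise at the end.
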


    The proof will be made explicit in Lemma \ref{zzz} below.

    In order to continue, some definitions are needed.
    \begin{definition}[{\cite{ghl}}]
        The height function on the quiver $\mathbf{Q}$ is 
        \[l_c:I\to \ZZ_{\geq 0}\] uniquely determined by the following properties:
        \begin{itemize}
            \item $0\in \mathrm{Im}(l_c)$;
            \item If there is an arrow $i\to j$ in $\mathbf{Q}$, then $l_c(i)=l_c(j)+1$.
        \end{itemize}
    \end{definition}
    A total order on the vertex set $V$ is defined by:
    \begin{equation}\label{total order}
        (i,l_c(i)+2a)<(j,l_c(j)+2b)\Longleftrightarrow (a<b)\ \text{or}\ (a=b,\ i<j).
    \end{equation}
    Thus, when considering matrices with columns and rows indexed by $V$, we assume to use this order.

    \emph{Important convention:}  \textbf{Fix a Coxeter element $c$. Let us call the associated seed $\Sigma_c$. From now, if not differently specified, we fix as initial quiver for $\mathcal{A}_{w_0}$ the quiver $\Gamma_c$ such that its highest red vertex (with respect to the total order \eqref{total order}) is of the form $(i,0)$.}
    \subsubsection{Stabilized $g$-vectors}
    As observed in Remark \ref{remark sullamutazione infinita}, applying mutation at each green vertex produces a quiver which is just $\Gamma_c$ with a one-step downward translation of the labels. Let us denote it by $\Gamma_c^{(1)}$.
    The same procedure applied to $\Gamma_c^{(1)}$ produces the quiver $\Gamma_c^{(2)}$, that is a two-steps downward translation of $\Gamma_c$. In this way we get an infinite sequence of quivers $(\Gamma_c^{(m)})_{m\geq 0}$, where $\Gamma_c^{(0)}$ is understood to be $\Gamma_c$.
    More precisely, for each $k\in \NN$, we denote $V_{\grn}^{(k)}\subset V$ the subset of green vertices of $\Gamma_c^{(k)}$. Define
    \[\mu_{\grn}^{(k)}:=\prod_{(i,l)\in V_{\grn}^{(k)}}\mu_{(i,l)}.\] Thus,
    \begin{equation}\label{successione di quiver}
        \Gamma_c^{(k+1)}=\mu_{\grn}^{(k)}(\Gamma_c^{(k)}).
    \end{equation} 
    Hence we use the notation:
    \begin{equation}
        \lim_{m\to \infty}\Gamma_c^{(m)}=\Gamma_e.
    \end{equation}
    For each quiver $\Gamma_c^{(m)}$ $(m\in \NN)$, let us call $\Sigma_c^{(m)}$ the associated seed. Let $\Sigma_e$ be the seed of $\mathcal{A}_e$ associated with $\Gamma_e$. We denote by $\ZZ^{(V)}$ the free $\ZZ$-module with a basis given by $V$. Every element in $\ZZ^{(V)}$ has only finitely many nonzero components. We denote by $\boldsymbol{e}_{(i,r)}$ the $(i,r)$-th vector of the standard basis of $\ZZ^{(V)}$. Then, for each vertex $(i,r)\in V$ of the quiver $\Gamma_c$, the $g$-vector of the corresponding initial cluster variable with respect to the reference seed $\Sigma_c^{(m)}$ is denoted by $g^{(m)}_{(i,r)}$ and it is an element of $\ZZ^{(V)}$. A natural question is to ask how the sequence $(g^{(m)}_{(i,r)})_{m\geq 0}$ behaves, for each $(i,r)\in V$. Since we have sequences of $g$-vectors $(g^{(m)}_{(i,r)})_{m\geq 0}$ for all $(i,r)\in V$, we also have a sequence of $G$-matrices $(G^{(m)})_{m\geq 0}$, that are $V\times V$ matrices such that the columns of $G^{(m)}$ are the $g^{(m)}_{(i,r)}$, for each $(i,r)\in V$. Note that 
    \[g^{(0)}_{(i,r)}=\boldsymbol{e}_{(i,r)},\] the $(i,r)$-th element of the standard basis of $\ZZ^{(V)}$, since the reference seed in this case coincides with the initial seed. 
    For all $m\in \ZZ$ we consider
    \[I^{(0)}_{\grn}(m):=\{i\in I\mid (i, l_c(i)+2m)\in \Gamma_c\ \text{is green}\}.\]
    Moreover, set
    \[h_c:=\text{min}\{m\in\ZZ\mid I_{\grn}^{(0)}(m)\neq \emptyset\}.\] Note that $h_c<0$.\\
    Following \cite[Definition 4.6]{ghl}, for $m\in \ZZ$, we define
        \[I(m):=\{(i,l_c(i)+2m)\mid i\in I\}\subset V.\] Note that $I(m)$ has the same cardinality as $I$.

    To state the following theorem we need the matrices $\{\mathbf{T}_m\}_{m\in\ZZ}$ which belong to $\ZZ^{I\times I}$ and are product of reflection matrices (see the complete definition in \cite[Section 4.3.2]{ghl})

    \begin{theorem}[{\cite[Theorem 4.11]{ghl}\label{teo stabilized g vectors}}]
        For each $k\geq 0$, the $G$-matrix $G^{(k)}$ is of block diagonal form 
        \[\mathrm{diag}\left(G^{(k)}(m)\mid m\in\ZZ \right),\]
         where each diagonal block $G^{(k)}(m)$ corresponds to the vertex set $I(m)\subset V$. We have
        \[G^{(k)}(m)=\mathbf{T}_{m+k-1}\cdots \mathbf{T}_{m+1}\cdot \mathbf{T}_m.\] In particular, the sequence of $G$-matrices stabilizes, in the sense that, for $k\gg|m|$, we have
        \begin{equation}
            G^{(k)}(m)=\begin{cases}
                \Id_n\qquad & \text{if}\ m>0,\\
                \mathbf{T}_{-1}\mathbf{T}_{-2}\cdots \mathbf{T}_m\quad &\text{if}\ h_c\leq m\leq -1,\\
                \mathbf{T}_{-1}\mathbf{T}_{-2}\cdots \mathbf{T}_{h_c}\quad &\text{if}\ m\leq h_c.
            \end{cases}
        \end{equation}
        Hence the sequence of $G$-matrices $(G^{(k)})_{k\geq 0}$ has a well defined limit
        \[G^{(\infty)}:=\lim_{k\to \infty }G^{(k)}.\] This means that for every initial cluster variable, the sequence of $g$-vectors stabilizes after finitely many steps. 
    \end{theorem}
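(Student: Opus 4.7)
The plan is to proceed by induction on $k$, using the standard piecewise-linear (tropical) formula for the transformation of $g$-vectors under a single mutation, due to Fomin-Zelevinsky / Nakanishi-Zelevinsky. At each inductive step, passing from $\Sigma_c^{(k)}$ to $\Sigma_c^{(k+1)}$ means performing the composite mutation $\mu_{grn}^{(k)}$ at all green vertices of $\Gamma_c^{(k)}$; since these green vertices lie along a single ``diagonal'' of $\Gamma_c^{(k)}$ and are pairwise non-adjacent, the elementary mutations commute and the composite is well-defined. Using sign-coherence, I would express the left action of $\mu_{grn}^{(k)}$ on the matrix $G^{(k)}$ as multiplication by an explicit product of elementary integer matrices, reducing the theorem to a combinatorial identification of those factors with the $\boldsymbol{T}$-matrices of \cite[Section 4.3.2]{ghl}.

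First I would establish the block-diagonal structure by induction on $k$. The base case $k=0$ is immediate since $G^{(0)}=\mathrm{Id}$. For the inductive step, the crucial local feature of the quivers $\Gamma_c^{(k)}$ is that every arrow connects vertices that lie either in the same level $I(m)$ or in two adjacent levels $I(m),\,I(m\pm 1)$. Combined with the description of $\Gamma_c^{(k)}$ as a translation of $\Gamma_c$ below height $l_c(i)+2k$, this forces the mutation at a green vertex sitting in $I(m+k)$ to modify only those rows of $G^{(k)}$ indexed by $I(m+k)$, using only entries from blocks along the same level. By inductive hypothesis the other blocks have zero entries in these rows, so the block-diagonal shape persists and only the block $G^{(k)}(m)$ gets updated.

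Next I would identify the effect of $\mu_{grn}^{(k)}$ on each block $G^{(k)}(m)$ with left multiplication by $\boldsymbol{T}_{m+k}$. By construction \cite[Section 4.3.2]{ghl}, $\boldsymbol{T}_{m+k}$ is the product of the elementary reflection matrices $\boldsymbol{R}_i$ indexed by those $i\in I$ for which $(i,l_c(i)+2(m+k))$ is green in $\Gamma_c^{(k)}$, taken in the order dictated by the height function $l_c$. On the other hand, each individual green-vertex mutation at level $I(m+k)$ contributes, via the $g$-vector formula and the level-locality of the arrows, precisely one factor $\boldsymbol{R}_i$ to the left of $G^{(k)}(m)$; checking signs (via sign-coherence) shows the correct branch of the piecewise-linear mutation rule is selected uniformly on the block. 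Iterating this identity in $k$ yields $G^{(k)}(m)=\boldsymbol{T}_{m+k-1}\cdots \boldsymbol{T}_{m+1}\boldsymbol{T}_m$.

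Finally, stabilization follows from a finite-support argument: for fixed $m$, once $k$ is large enough, the level $I(m+k)$ of $\Gamma_c^{(k)}$ lies entirely in the ``limit'' region where $\Gamma_c^{(k)}$ coincides with $\Gamma_e$ and contains no green vertices, so $\boldsymbol{T}_{m+k}=\mathrm{Id}$. Distinguishing the cases according to whether $m>0$, $h_c\leq m\leq -1$ or $m\leq h_c$ then gives the three formulas in the statement, and the limit matrix $G^{(\infty)}=\lim_{k\to\infty}G^{(k)}$ is well defined column by column. I expect the main obstacle to be the precise combinatorial matching carried out in the third paragraph: one must verify that the order in which the commuting green-vertex mutations contribute their elementary reflections to $G^{(k)}(m)$ coincides with the height-function order used in \cite{ghl} to define $\boldsymbol{T}_{m+k}$, and that the sign-coherence input to the $g$-vector mutation formula is indeed uniform across each level block.
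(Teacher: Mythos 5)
The paper itself does not prove this theorem: it is stated with the citation {\cite[Theorem 4.11]{ghl}}, so there is no proof in the present paper to compare your attempt against; I can only assess the internal soundness of your sketch. Your broad strategy---induction on $k$ using the tropical mutation rule for $g$-vectors together with sign-coherence---is the natural one, but the execution has two concrete problems. First, the claim that the green vertices of $\Gamma_c^{(k)}$ ``lie along a single diagonal'' is false. By construction there are $N=\ell(w_0)$ green vertices in $\Gamma_c$ (one for each letter of the reduced word for $w_0$ read off the green column indices), and they are distributed across the levels $I(h_c),\ldots,I(-1)$. In type $A_3$, for instance, $\Gamma_c$ in Figure \ref{fig:quiver type A3} has six green vertices spanning several diagonals. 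This does not by itself destroy commutativity of the green mutations (Remark \ref{remark sullamutazione infinita} asserts it), but it means $\mu_{grn}^{(k)}$ touches several blocks at once and your bookkeeping of which elementary factors act on which block $G^{(k)}(m)$ needs to be redone.

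Second, and more seriously, the inductive preservation of block-diagonality does not follow from level-locality of arrows alone, which is all you invoke. Under mutation of the reference seed at a vertex $v$, the $g$-vector entry at a neighbor $w$ changes by a term of the form $[\pm b_{wv}]_+\, g_v$. If $v\in I(m)$ and the inductive hypothesis gives that $g$ is supported in $I(m)$ with $g_v\neq 0$, then a neighbor $w\in I(m\pm 1)$ can a priori acquire a nonzero entry, which would break the block structure. The theorem is true only because the sign of $g_v$ (sign-coherence) and the specific orientation of arrows incident to green vertices in $\Gamma_c^{(k)}$ conspire so that the relevant positive parts $[\pm b_{wv}]_+$ vanish for every neighbor $w$ lying outside $I(m)$. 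You flag this as ``checking signs'' but do not carry it out, and without it neither the block-diagonal form of $G^{(k)}$ nor the identification of the block update with left multiplication by $\boldsymbol{T}_{m+k}$ is established. This is precisely the part where the detailed combinatorics of $\Gamma_c^{(k)}$ near the irregular region must enter, and it is the heart of the proof in \cite{ghl}.
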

    As a consequence, if $(i,r)\in I(m)$, then the support of $g^{(k)}_{(i,r)}$ is contained in $I(m)$.

    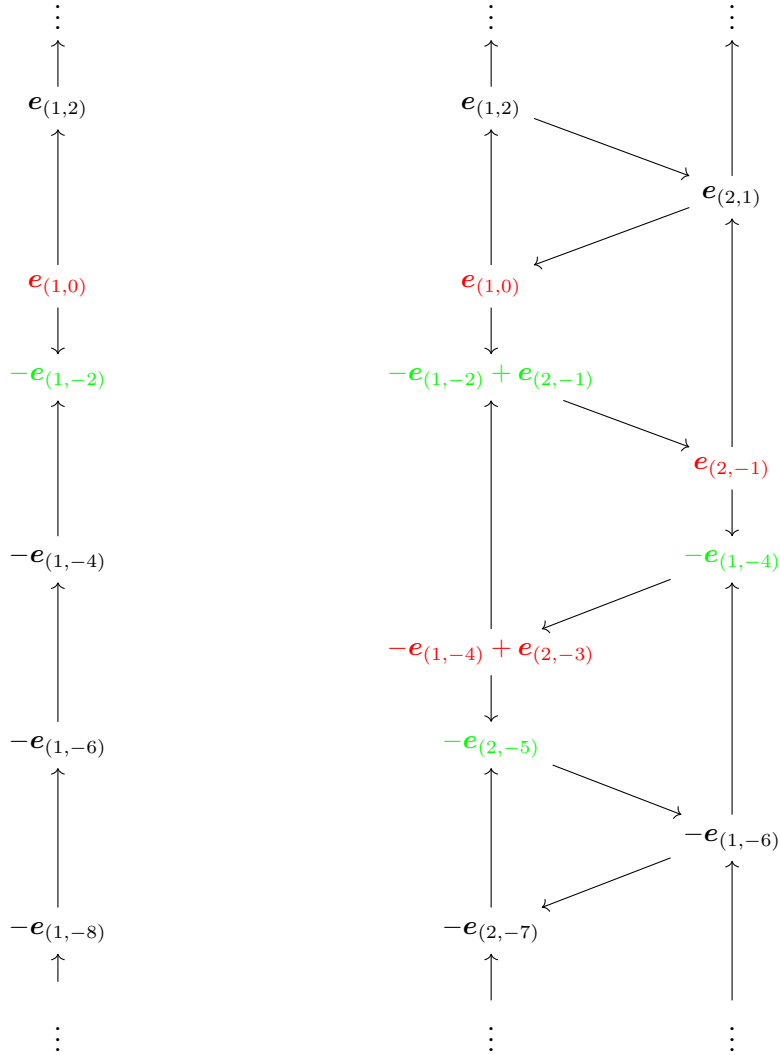
\begin{figure}[H]
        \centering
       
    \[\begin{tikzcd}
    	\vdots &&&& \vdots & \vdots \\
    	{\boldsymbol{e}_{(1,2)}} &&&& {\boldsymbol{e}_{(1,2)}} \\
    	&&&&& {\boldsymbol{e}_{(2,1)}} \\
    	{\color{red}{\boldsymbol{e}_{(1,0)}}} &&&& {\color{red}{\boldsymbol{e}_{(1,0)}}} \\
    	{\color{green}{-\boldsymbol{e}_{(1,-2)}}} &&&& {\color{green}{-\boldsymbol{e}_{(1,-2)}+\boldsymbol{e}_{(2,-1)}}} \\
    	&&&&& {\color{red}{\boldsymbol{e}_{(2,-1)}}} \\
    	{-\boldsymbol{e}_{(1,-4)}} &&&&& {\color{green}{-\boldsymbol{e}_{(1,-4)}}} \\
    	&&&& {\color{red}{-\boldsymbol{e}_{(1,-4)}+\boldsymbol{e}_{(2,-3)}}} \\
    	{-\boldsymbol{e}_{(1,-6)}} &&&& {\color{green}{-\boldsymbol{e}_{(2,-5)}}} \\
    	&&&&& {-\boldsymbol{e}_{(1,-6)}} \\
    	{-\boldsymbol{e}_{(1,-8)}} &&&& {-\boldsymbol{e}_{(2,-7)}} \\
    	\vdots &&&& \vdots & \vdots
    	\arrow[from=2-1, to=1-1]
    	\arrow[from=2-5, to=1-5]
    	\arrow[from=2-5, to=3-6]
    	\arrow[from=3-6, to=1-6]
    	\arrow[from=3-6, to=4-5]
    	\arrow[from=4-1, to=2-1]
    	\arrow[from=4-1, to=5-1]
    	\arrow[from=4-5, to=2-5]
    	\arrow[from=4-5, to=5-5]
    	\arrow[from=5-5, to=6-6]
    	\arrow[from=6-6, to=3-6]
    	\arrow[from=6-6, to=7-6]
    	\arrow[from=7-1, to=5-1]
    	\arrow[from=7-6, to=8-5]
    	\arrow[from=8-5, to=5-5]
    	\arrow[from=8-5, to=9-5]
    	\arrow[from=9-1, to=7-1]
    	\arrow[from=9-5, to=10-6]
    	\arrow[from=10-6, to=7-6]
    	\arrow[from=10-6, to=11-5]
    	\arrow[from=11-1, to=9-1]
    	\arrow[from=11-5, to=9-5]
    	\arrow[shorten <=7pt, from=12-1, to=11-1]
    	\arrow[from=12-5, to=11-5]
    	\arrow[from=12-6, to=10-6]
    \end{tikzcd}\]
        \caption{Stabilized $g$-vectors in type $A_1$ and $A_2$.}
        \label{fig:stabilized g vectors}
    \end{figure}

    Recall, for later use, \cite[Theorem 4.15]{ghl}, where an Auslander--Reiten type algorithm to compute stabilized $g$-vectors is presented.

    Otherwise, one can compute stabilized $g$-vectors using Chari's braid group action \cite{chari}.
    \begin{definition}\label{azione braid su base standard}
    For $i\in I$, we define an automorphism $\Theta_i$ on the free $\ZZ$-module $\ZZ^{(V)}$ by:
    \[\Theta_i(\boldsymbol{e}_{(j,r)}):=
    \begin{cases}
        \boldsymbol{e}_{(j,r)}\quad \mathrm{if}\  i\neq j\\
        -\boldsymbol{e}_{(i,r-2)}+\sum\limits_{k:\ c_{ik}=-1}\boldsymbol{e}_{(k,r-1)}
    \end{cases}\qquad\text{for all }(j,r)\in V.\]
    \end{definition}
    By \cite{chari}, these automorphisms generate an action of the braid group associated with the Weyl group of $\g$ on $\ZZ^{(V)}$.
    By construction of $\Gamma_c$, if we read the first indices of the green vertices of $\Gamma_c$ from top to bottom, we obtain a sequence $(i_1,...,i_N)$ such that $s_{i_1}\cdots s_{i_N}$ is a reduced expression for $w_0$, the longest element of $W$. Let $(i_1,a_1),...,(i_N,a_N)$ be the corresponding sequence of green vertices in $\Gamma_c$. Then, for $1\leq t\leq N$, let $s_t:=\#\{j<t\mid i_j=i_t\}$. Moreover, for $i\in I$, let $(i,m_i)$ denote the highest red vertex in the column $i$ of $\Gamma_c$. The next result shows that the stabilized $g$-vectors can be computed via braid group action.
    \begin{proposition}[{\cite[Proposition 4.19]{ghl}}]\label{g vector braid action}
        For a vector $\boldsymbol{v}=(v_{(i,l)})_{(i,l)\in V}\in\ZZ^{(V)}$, and $s\in\ZZ$, we consider the shifted vector $\boldsymbol{v}[s]=(v'_{(i,l)})_{(i,l)\in V}$, defined by:
         \[v'_{(i,l)}:=v_{(i,l-2s)},\  (i,l)\in V.\]
        Then, for $1\leq t\leq N$, we have
        \begin{equation}
            g^{(\infty)}_{(i_t,a_t)}=\Theta_{i_1}\Theta_{i_2}\cdots \Theta_{i_t}\left(\boldsymbol{e}_{(i_t,m_{i_t})}\right)[-s_t].
        \end{equation}
    \end{proposition}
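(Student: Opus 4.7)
The plan is to reduce the identity to Theorem \ref{teo stabilized g vectors}, by matching the composition $\Theta_{i_1}\Theta_{i_2}\cdots\Theta_{i_t}$ applied to a standard basis vector with the appropriate product of reflection matrices $\boldsymbol{T}_m$ that computes the stabilized $g$-vector according to that theorem.

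First I would make precise the role of the shift $[-s_t]$. By construction of $\Gamma_c$, reading the sequence $(i_1,a_1),\ldots,(i_N,a_N)$ of green vertices from top to bottom, each re-occurrence of an index $i$ corresponds to a downward drop by two rows in column $i$. Hence $a_t=m_{i_t}-2s_t$, and $(i_t,a_t)\in I(m)$ for $m=(a_t-l_c(i_t))/2$. Since the shift $[s]$ sends $\boldsymbol{e}_{(j,r)}$ to $\boldsymbol{e}_{(j,r+2s)}$, the translation $[-s_t]$ moves a vector supported in $I(m+s_t)$ into the block $I(m)$ where the stabilized $g$-vector lives.

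Second, by Theorem \ref{teo stabilized g vectors}, $g^{(\infty)}_{(i_t,a_t)}$ is supported in $I(m)$ and equals the $i_t$-th column of the matrix
\[\boldsymbol{T}_{-1}\boldsymbol{T}_{-2}\cdots\boldsymbol{T}_m\]
(truncated at $\boldsymbol{T}_{h_c}$ when $m\leq h_c$; the identity when $m>0$). The goal is therefore to identify this column with $\Theta_{i_1}\cdots\Theta_{i_t}(\boldsymbol{e}_{(i_t,m_{i_t})})[-s_t]$.

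The core step, and the main obstacle, is an inductive comparison on $t$. The operator $\Theta_i$ fixes $\boldsymbol{e}_{(j,r)}$ when $j\neq i$ and sends $\boldsymbol{e}_{(i,r)}$ to $-\boldsymbol{e}_{(i,r-2)}+\sum_{k:\,c_{ik}=-1}\boldsymbol{e}_{(k,r-1)}$; exactly this row operation is recorded by a single factor $\boldsymbol{T}_m$ when one mutates the green vertex in column $i$ at height $m$. The delicate point is to track, at each inductive step, precisely which row each newly produced contribution lands in: the $s_t$ earlier indices $j<t$ with $i_j=i_t$ have contributed a cumulative downward translation by $2s_t$ along the $i_t$-column, and this is exactly the offset that $[-s_t]$ restores. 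Since $s_{i_1}\cdots s_{i_N}$ is a reduced expression for $w_0$, the braid relations satisfied by the $\Theta_i$ (from \cite{chari}) are compatible with the order in which green vertices are mutated in the construction of $\Gamma_c$, so the left-to-right composition $\Theta_{i_1}\cdots\Theta_{i_t}$ mirrors the product $\boldsymbol{T}_{-1}\boldsymbol{T}_{-2}\cdots\boldsymbol{T}_m$ after the shift. Specialization at $\boldsymbol{e}_{(i_t,m_{i_t})}$ then yields the claimed formula.
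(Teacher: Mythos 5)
The paper itself does not prove this proposition: it is imported verbatim from \cite[Proposition 4.19]{ghl} and is used as a black box, so there is no internal proof to compare your attempt against. What follows evaluates the attempt on its own terms.

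Your attempt has one concretely false claim. The relation $a_t=m_{i_t}-2s_t$ asserted in the second paragraph does not hold. In type $A_1$ with the paper's normalization the red vertex is $(1,0)$ and the unique green vertex is $(1,-2)$, so $a_1=-2$ while $m_{i_1}-2s_1=0-0=0$. In type $A_2$, reading $\Gamma_c$ one finds greens $(1,-2),(2,-3),(1,-6)$ and reds $(1,0),(2,-1),(1,-4)$, so $a_3=-6$ while $m_{i_3}-2s_3=0-2=-2$. What is actually true, since each application of the procedure in Definition~\ref{procedura ghl} pushes the lower part of the relevant column down by $2$, is that the $(s_t+1)$-th green vertex in column $i_t$ sits at $a_t=m_{i_t}-4s_t-2$; this is a different offset, and in particular the shift $[-s_t]$ in the statement is not simply ``undoing'' a drop of $2s_t$ rows along column $i_t$ in the way you describe.

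Beyond that, the entire content of the proposition is concentrated in what you yourself label ``the delicate point'': verifying by induction on $t$, with careful block bookkeeping, that the appropriate column of the product $\boldsymbol{T}_{-1}\boldsymbol{T}_{-2}\cdots\boldsymbol{T}_m$ of Theorem~\ref{teo stabilized g vectors} coincides with $\Theta_{i_1}\cdots\Theta_{i_t}(\boldsymbol{e}_{(i_t,m_{i_t})})[-s_t]$. You state no induction hypothesis, do not track how a single application of $\Theta_{i_j}$ splits its output between the two adjacent diagonal blocks $I(m)$ and $I(m-1)$ (which it does in general, since the neighbor contributions at height $r-1$ can land in either block depending on the orientation of $\mathbf{Q}$ at $i_j$), and appeal to the braid relations being ``compatible with the order in which green vertices are mutated'' without turning this into an argument. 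The reduction to Theorem~\ref{teo stabilized g vectors} is a reasonable strategy, but the argument as written neither supplies the missing computation nor corrects the offset error noted above, so it does not establish the identity.
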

\begin{remark}\label{remark g vector braid action}
    Although Proposition \ref{g vector braid action} applies only to green vertices of $\Gamma_c$, using shifts as in \cite[Theorem 4.15]{ghl}, we can use the braid group action to compute all stabilized $g$-vectors. Namely, for every $(i,r)\in V$, the stabilized $g$-vector can be written as
    \[g^{(\infty)}_{(i,r)}=\Theta_{i_1}\cdots\Theta_{i_t}\left(\boldsymbol{e}_{(i_t,m_{i_t})}\right)[s]\] for well defined $1\leq t\leq N$ and $s\in \ZZ$.
\end{remark}

    \subsubsection{Variables}
    Now we recap some results from \cite[Section 5]{ghl} and \cite[Section 6]{ghl} about certain polynomial and power series ring that are related with the representation theory of $\mathcal{U}_q(\hat{\g})$ and its shifted versions. Moreover we add some clarifications about results that are ``implicit'' in \cite{ghl}. Note that in \cite[Section 5]{ghl} a lot of importance is given to a Weyl group action defined on a certain ring $\Pi'$. However we do not need it for our purposes, hence we can simplify the content of that section. Consider the ring of Laurent polynomials
    \[\mathcal{Y}':=\ZZ\big[\Psi_{i,q^r}^{\pm 1},[\lambda]\mid (i,r)\in V,\lambda\in P_{\QQ}\big],\] where, coherently with Example \ref{rappresentazione costante}, the multiplication rule in $[P_{\QQ}]$ is $[\lambda][\lambda']=[\lambda+\lambda']$, for all $\lambda,\lambda'$ in $P_{\QQ}$. 
    
    Chari braid group action of Definition \ref{azione braid su base standard} can be equivalently defined on $\mathcal{Y}'$. For all $i\in I$, we have
     \begin{equation}\label{chari braid T}
         T_i:\mathcal{Y}'\to \mathcal{Y}'
     \end{equation}
    defined by the formulas
    \[T_i(\Psi_{i,q^r}):=\Psi_{i,q^{r+2}}^{-1}\prod_{j:c_{ij}=-1}\Psi_{j,q^r},\quad T_i(\Psi_{k,q^r}):=\Psi_{k,q^r},\ (k\neq i),\quad T_i([\omega]):=[\omega],\ \omega\in P_{\QQ}.\]
    Note that this definition holds for simply-laced type Lie algebras. Chari has defined such an action for all types (and a generalization is also defined in \cite[Section 3.2]{fh23}). 

    \begin{definition}
         For $(i,r)\in V$, we define the following monomials in $\mathcal{Y}'$:
         \begin{align}
             Y_{i,q^{r+1}}&:=[\omega_i]\frac{\Psi_{i,q^{r}}}{\Psi_{i,q^{r+2}}};\\
             A_{i,q^r}&:= Y_{i,q^{r-1}}Y_{i,q^{r+1}}\prod_{j:c_{ji}=-1}Y_{j,q^r}^{-1};\\
             \widetilde{\Psi}_{i,q^r}&:= \Psi^{-1}_{i,q^r}\prod_{j:c_{ij}=-1}\Psi_{j,q^{r+1}} \label{psi tilde}
             \end{align} Note the shift of parity in the definition of the $Y$'s.
     \end{definition}

    Now we consider a \emph{complete topological} ring $\widetilde{\mathcal{Y}'}$ obtained from $\mathcal{Y}'$. The construction is explained in \cite[Section 3.7]{fh23} and we omit it. However, we will need the following.
    \begin{remark}
    Recall the ring $\mathcal{E}_{\ell}$ defined in Section \ref{section borel}. Let us define $\mathcal{E}_{\ell,\ZZ}$ to be the $[P_{\QQ}]$-topological subring of $\mathcal{E}_{\ell}$ generated by $[\Psi_{i,q^r}^{\pm 1}]$, such that $(i,r)\in V$.
        Then, we can identify $\widetilde{\mathcal{Y}'}$ with $\mathcal{E}_{\ell,\ZZ}$. Indeed, we can identify the variable $\Psi_{i,q^r}$ with the $\ell$-weight $\Psi_{i,q^r}$ and the variable $[\lambda]$, with the $\ell$-weight $[\lambda]$ (for all $(i,r)\in V$ and for all $\lambda\in P_{\QQ}$) so that the notations are coherent.
    \end{remark}
    In \cite[Section 3.7]{fh23} the authors build a ring $\Pi'$ as the direct sum of complete topological rings parameterized by the elements of the Weyl group $W$ of $\g$. In particular, the component of $\Pi'$ indexed by the identity element is precisely $\widetilde{\mathcal{Y}'}$. Let us denote by $E_e'$ the projection of $\Pi'$ on $\widetilde{\mathcal{Y}'}$. We will only need this component, thus we present the definitions and results of \cite{ghl} in an easier way. 

    Following \cite{fh23}, we will need the following elements of $\widetilde{\mathcal{Y}'}$.
    \begin{definition}
         For $(i,r)\in V$, we define 
         
        \begin{equation}
             \Sigma_{i,q^r}:=\sum_{k\geq 0}\prod_{0<j\leq k}A_{i,q^{r-2j+2}}^{-1}=1+A_{i,q^r}^{-1}(1+A_{i,q^{r-2}}^{-1}(1+...)) \in \widetilde{\mathcal{Y}'}.
         \end{equation}
     \end{definition}
    In \cite[Proposition 3.17]{fh23} (see also \cite{ghl}), for every $i\in I$, some operators $\Theta_{s_i}'$ of $\Pi'$ are defined. They generate an action of the braid group associated with $W$ on $\Pi'$. For every $i\in I$, the projection of the operator $\Theta_{s_i}'$ on $\widetilde{\mathcal{Y}'}$ is the following:
    \[(E_e'\circ\Theta_{s_i}')([\lambda]):=[\lambda],\quad (E_e'\circ\Theta_{s_i}')(\Psi_{j,q^r}):=\begin{cases}
            \Psi_{j,q^r},\ &\mathrm{if}\ i\neq j\\
            \widetilde{\Psi}_{i,q^{r-2}}\Sigma_{i,q^{r-2}},\ &\mathrm{if}\ i=j.
        \end{cases}\]
 
     This allows us do define some elements of $\widetilde{\mathcal{Y}'}$, which will play a key role. They were introduced in \cite{fh23} in terms of a Weyl group action, but we present a realization from \cite{ghl}.
         \begin{definition}\label{q variabli proiezioni}\cite[Lemma 6.5]{ghl}
             For every $w\in W$, $(i,r)\in V$ let
             \[Q_{w(\omega_i),q^r}:=(E_e'\circ\Theta_w')(\Psi_{i,q^r})\] where $\Theta'_w$ denotes the composition $\Theta_{s_{i_1}}'\cdots \Theta_{s_{i_k}}'$, if $w=s_{i_1}\cdots s_{i_k}$. The notation $Q_{w(\omega_i),q^r}$ is well defined, i.e. it depends only on the weight $w(\omega_i)$ and on $q^r$, as explained in \cite[Definition 6.3]{ghl}.
        \end{definition}

    We refer to these elements as $Q$-variables.\\[10pt]
    We denote by $K_{\ZZ}$ the $[\underline{P}]$-subalgebra of $\widetilde{\mathcal{Y}'}$ topologically generated by the elements
    \[Q_{w(\omega_i),q^r},\qquad\text{for all }(i,r)\in V,\ w\in W.\] In particular, $K_{\ZZ}$ contains all the elements of the form $\Psi_{i,q^r}$, $(i,r)\in V$.
    \begin{proposition}[{\cite[Proposition 7.1]{ghl}}]
        For every $w\in W$ and $(i,r)\in V$, the element $Q_{w(\omega_i),q^r}$ is invertible in $K_{\ZZ}$.
    \end{proposition}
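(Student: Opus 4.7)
The strategy is to exhibit, for each $w \in W$ and $(i,r) \in V$, a factorization
\[
Q_{w(\omega_i), q^r} \;=\; M_{w,i,r} \cdot \bigl(1 + x_{w,i,r}\bigr)
\]
inside $[\underline{P}] \otimes_{[P]} \widetilde{\mathcal{Y}}'$, where $M_{w,i,r}$ is a Laurent monomial in the generators $\Psi_{j,q^s}^{\pm 1}$ (with a $[\underline{P}]$-coefficient), and $x_{w,i,r}$ is topologically nilpotent in the sense that its monomial summands lie in strict negative-cone translates of the weight of $M_{w,i,r}$ (and the depth grows with powers of $x$). Granted such a factorization, invertibility is formal: $M_{w,i,r}$ is invertible in the Laurent ring, and $(1+x_{w,i,r})^{-1} = \sum_{k\geq 0}(-x_{w,i,r})^k$ converges in the complete topology.

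I would proceed by induction on $\ell(w)$. The base case $w=e$ is trivial: $Q_{\omega_i,q^r} = \Psi_{i,q^r}$, so take $M=\Psi_{i,q^r}$ and $x=0$. For a simple reflection $w = s_i$ the explicit formulas
\[
Q_{s_i(\omega_i), q^r} = \widetilde{\Psi}_{i,q^{r-1}}\,\Sigma_{i,q^{r-2}}, \qquad Q_{s_i(\omega_j), q^r} = \Psi_{j,q^r}\ (j \neq i)
\]
yield the factorization directly: $\widetilde{\Psi}_{i,q^{r-1}}$ is a Laurent monomial, and $\Sigma_{i,q^{r-2}} - 1 = \sum_{k\geq 1}\prod_{j=1}^{k} A_{i,q^{r-2j}}^{-1}$ is topologically nilpotent, since a direct computation from the definition $A_{i,q^s} = Y_{i,q^{s-1}}Y_{i,q^{s+1}}\prod_{k:c_{ki}=-1}Y_{k,q^s}^{-1}$ shows $A_{i,q^s}$ has weight $\alpha_i$, so the $k$-th summand has weight $-k\alpha_i$. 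For the inductive step, writing $w = s_j w'$ with $\ell(w')=\ell(w)-1$, I would apply the ring automorphism $\Theta_{s_j}'$ of $\Pi'$ to the inductive factorization of $\Theta_{w'}'(\Psi_{i,q^r})$: multiplicativity of $\Theta_{s_j}'$ transports the factorization, with the leading monomial $M_{w',i,r}$ either unchanged or picking up a new $\widetilde{\Psi}\,\Sigma$ pair (itself of the form handled in the base case), and the unit factor $1+x_{w',i,r}$ mapping to a series of the same controlled shape. Projecting via $E_e'$ then delivers the factorization for $w$. The exponent vector of $M_{w,i,r}$ obtained this way matches the braid computation of Proposition \ref{g vector braid action}, which serves as a combinatorial sanity check.

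To conclude, I must verify that $Q_{w(\omega_i),q^r}^{-1}$ actually lies in $K_{\ZZ}$, not merely in the ambient ring. The summands of $(1+x_{w,i,r})^{-1}$ are products of the monomials $A^{-1}_{k,q^s}$, each of which is expressible as a polynomial in the $Q$-variables (indeed it appears inside the $\Sigma$-factor of $Q_{s_k(\omega_k),q^{\cdot}}$), so the partial sums lie in the $Q$-subalgebra and the limit lies in its topological closure $K_{\ZZ}$ by definition. \textbf{The main obstacle} I anticipate is the mild circularity at the very bottom: the inverse $\Psi_{i,q^r}^{-1}$ must itself belong to $K_{\ZZ}$ for the Laurent factor $M^{-1}$ to be meaningful inductively, but this is precisely the $w = e$ instance of the proposition. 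I would resolve it by showing directly that $\Psi_{i,q^r}^{-1}$ is, up to a $[\underline{P}]$-factor, the leading monomial of $Q_{w_0(\omega_i),q^r}$; this is a braid-group computation via Proposition \ref{g vector braid action} applied to the longest element $w_0$, after which $\Psi^{-1}$ is realized as a topological limit inside $K_{\ZZ}$ and the induction closes uniformly.
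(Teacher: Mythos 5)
Your overall strategy---factor $Q_{w(\omega_i),q^r}$ as a Laurent monomial $M$ times $(1+x)$ with $x$ topologically nilpotent, and invert each factor using completeness---is reasonable, and you correctly identify that the real issue is landing the inverse back in $K_{\ZZ}$ rather than merely in the ambient complete ring. (The paper itself does not reprove this; it cites \cite[Proposition 7.1]{ghl}.) However, the step you use to close that loop does not hold: you assert that each summand of $(1+x)^{-1}$ is a product of $A^{-1}_{k,q^s}$'s, ``each of which is expressible as a polynomial in the $Q$-variables (indeed it appears inside the $\Sigma$-factor of $Q_{s_k(\omega_k),q^{\cdot}}$).'' A monomial occurring as one term of a power series equal to a $Q$-variable is not thereby a polynomial in $Q$-variables; in fact $A^{-1}_{k,q^s}$ is a Laurent monomial involving negative powers of the $\Psi$'s, so asserting $A^{-1}_{k,q^s}\in K_{\ZZ}$ is essentially the proposition itself in disguise. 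The correct move is to show the whole factor $1+x$ lies in $K_{\ZZ}$ (via $1+x = M^{-1}Q_{w(\omega_i),q^r}$, once $M^{-1}\in K_{\ZZ}$ is known), after which each partial sum of the Neumann series is a polynomial in $1+x\in K_{\ZZ}$ and the limit lies in the closed subalgebra $K_{\ZZ}$.

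This makes your flagged circularity at the base case genuinely load-bearing, and your proposed resolution is too vague to close it. Knowing that $\Psi_{i,q^r}^{-1}$ is the \emph{leading term} of $Q_{w_0(\omega_{i^*}),q^{r'}}$ does not ``realize $\Psi^{-1}$ as a topological limit inside $K_{\ZZ}$'': the truncations of that series converge to $Q_{w_0(\omega_{i^*}),q^{r'}}$, not to its leading monomial. What you actually need is that the product $Q_{\omega_i,q^r}\,Q_{w_0(\omega_{i^*}),q^{r'}}$ lies in $K_{\ZZ}$ (product of $Q$-variables) and equals $1+(\text{topologically nilpotent})$ because the leading monomials cancel exactly---this uses crucially that for $w=w_0$ the leading monomial is a \emph{single} inverse prefundamental $\Psi_{i^*,q^s}^{-1}$ with no extra $\Psi_{j}$ factors (a braid-group/$g$-vector computation). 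That product is then invertible in $K_{\ZZ}$, and $\Psi_{i,q^r}^{-1}=Q_{w_0(\omega_{i^*}),q^{r'}}\cdot\bigl(Q_{\omega_i,q^r}\,Q_{w_0(\omega_{i^*}),q^{r'}}\bigr)^{-1}\in K_{\ZZ}$, after which your induction does close. Finally, a smaller caution: your inductive step applies $\Theta'_{s_j}$ and then projects by $E'_e$, but $E'_e$ is not a ring homomorphism and does not commute with $\Theta'_{s_j}$, so ``multiplicativity of $\Theta'_{s_j}$'' alone does not justify transporting the factorization through the projection.
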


    \begin{definition}\label{def truncation}
        Let $\overline{\mathcal{M}'}$ be the multiplicative group of monomials in the variables $Q_{w(\omega_i),q^r}$, for all $(i,r)\in V$, $w\in W$, and their inverses. In particular, for each $w\in W$, $E_e'\circ \Theta'_w$ is an automorphism of $\overline{\mathcal{M}'}$.\\ Moreover, let $\mathcal{M}'$ be the multiplicative group of Laurent monomials in $\mathcal{Y}'$. Then we extend the definition of the truncation morphism from \cite{fh22} to $\overline{\mathcal{M}'}$. Let 
        \[\Xi:\overline{\mathcal{M}'}\to \mathcal{M}'\] be the truncation homomorphism which assigns to each $P\in \overline{\mathcal{M}'}$ its leading term. (In \cite{fh22} the truncation is denoted by $\Lambda$, but here we use $\Xi$ to avoid confusion with the quantization matrices).
    \end{definition}

    We have the following
    \begin{remark}\label{q var hanno hw}
         Each variable $Q_{w(\omega_i),q^r}$ has a unique factorization of the form 
         \[Q_{w(\omega_i),q^r}=\Psi_{w(\omega_i),q^r}\Sigma_{w(\omega_i),q^r},\] where $\Psi_{w(\omega_i),q^r}$ is a Laurent monomial in the variables $\Psi_{j,q^s}$ and $\Sigma_{w(\omega_i),q^r}$ is a formal power series in the variables $A_{j,b}^{-1}$ with constant term equal to $1$. By \cite[Proposition 3.6]{fh23}, $\Psi_{w(\omega_i),q^r}$ is the leading term in $Q_{w(\omega_i),q^r}$, i.e. 
         \[\Xi(Q_{w(\omega_i),q^r})=\Psi_{w(\omega_i),q^r}.\] 
     \end{remark}
    The following lemma is about a fact that is hidden in the lines of \cite{ghl}, but since we need it, we present and prove it. 
    \begin{lemma}\label{T gives leading term}
        Let $\sigma:\mathcal{Y}'\to\mathcal{Y}'$ be the automorphism defined by:
        \[\sigma(\Psi_{i,q^r})=\Psi_{i,q^{-r}}^{-1},\quad \sigma([\omega])=[\omega],\quad (i,r)\in V,\ \omega\in P_{\QQ}.\] Then, for every $w\in W$, we have
        \begin{equation}
            \Xi (Q_{w(\omega_i),q^r})=\sigma\ \circ\ T_w\ \circ \sigma(\Psi_{i,q^r}).
        \end{equation}  
    \end{lemma}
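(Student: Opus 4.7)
The plan is to proceed by induction on the length $\ell(w)$ of $w \in W$, using the multiplicative/iterative structure of both sides.

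\emph{Base case.} For $w = e$, the operator $\Theta'_e$ is the identity, so by Definition \ref{q variabli proiezioni} one has $Q_{\omega_i, q^r} = \Psi_{i, q^r}$, hence $\Xi(Q_{\omega_i, q^r}) = \Psi_{i, q^r}$. Since $\sigma$ is an involution and $T_e = \mathrm{id}$, the right-hand side also reduces to $\Psi_{i, q^r}$, and the identity holds.

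\emph{Inductive step.} Assume the claim for all $w'$ with $\ell(w') < \ell(w)$, and write $w = s_j w'$ with $\ell(w) = \ell(w') + 1$. By Remark \ref{q var hanno hw}, $\Xi(Q_{w(\omega_i), q^r}) = \Psi_{w(\omega_i), q^r}$, so the claim reduces to showing
\[\Psi_{w(\omega_i), q^r} \;=\; \sigma \circ T_j \circ \sigma \bigl( \Psi_{w'(\omega_i), q^r} \bigr).\]
Indeed, the inductive hypothesis gives $\Psi_{w'(\omega_i), q^r} = \sigma \circ T_{w'} \circ \sigma(\Psi_{i, q^r})$; combined with the factorisation $T_w = T_j T_{w'}$ (which holds for reduced expressions, since the $T_i$ realise a Weyl group action on $\mathcal{Y}'$), this yields $\sigma \circ T_w \circ \sigma(\Psi_{i, q^r})$ on the right.

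The key technical input is the single-step compatibility
\[\Xi \bigl( (E_e' \circ \Theta'_{s_j})(P) \bigr) \;=\; \sigma \circ T_j \circ \sigma \bigl( \Xi(P) \bigr) \qquad (P \in \overline{\mathcal{M}'}), \]
applied with $P = Q_{w'(\omega_i), q^r}$. Since both $\Xi$ and the action of $\sigma T_j \sigma$ are multiplicative on $\overline{\mathcal{M}'}$, and $E_e' \circ \Theta'_{s_j}$ is an algebra homomorphism on its image modulo lower-order corrections captured by $\Xi$, it suffices to verify this compatibility on the generators $\Psi_{k, q^s}$. For $k \neq j$, both $\Theta'_{s_j}$ and $T_j$ act trivially, so both sides coincide. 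For $k = j$, one uses the explicit formula $(E_e' \circ \Theta'_{s_j})(\Psi_{j, q^s}) = \widetilde{\Psi}_{j, q^{s-1}} \Sigma_{j, q^{s-2}}$: since $\Sigma_{j, q^{s-2}}$ has constant term $1$ and all other terms are monomials in $A^{-1}_{j, q^\bullet}$ strictly below the constant term with respect to the partial order defining $\Xi$, the leading term is $\widetilde{\Psi}_{j, q^{s-1}}$. One then checks directly from the definitions of $\sigma$ and $T_j$ that $\sigma \circ T_j \circ \sigma(\Psi_{j, q^s})$ produces the same Laurent monomial.

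\emph{Main obstacle.} The delicate step is precisely the verification on the generator $\Psi_{j, q^s}$: reconciling the index shifts (the shift by $-1$ on the $q$-exponent in $\widetilde{\Psi}_{j, q^{s-1}}$ versus the shift by $+2$ in the $j$-th factor of $T_j$) requires attentive bookkeeping, and one must confirm that the resulting monomial identity is compatible with the parity constraints of the vertex set $V$ (which is only one connected component of $I \times \ZZ$). Once this generator-level identity is established, the multiplicative extension to all of $\overline{\mathcal{M}'}$ is automatic, and the induction assembles the full statement via the braid relation $T_w = T_j T_{w'}$. Multiplicativity of the truncation $\Xi$ on $\overline{\mathcal{M}'}$ (which follows from the factorisation in Remark \ref{q var hanno hw}) is what allows this strategy to go through cleanly.
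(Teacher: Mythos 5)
The paper proves only the simple reflection case $w = s_i$ by direct computation and leaves the general case implicit; your proposal is more ambitious, setting up a full induction on $\ell(w)$. Your base case and your identification of the generator-level computation for $k = j$ as the key case are sound and match the paper's actual verification. However, the inductive step contains a genuine gap.

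The gap is in the ``single-step compatibility'' $\Xi\bigl((E_e'\circ\Theta'_{s_j})(P)\bigr) = \sigma\circ T_j\circ\sigma\bigl(\Xi(P)\bigr)$, which you justify by ``multiplicative extension'' from the generators. Two unproven facts are silently used. First, you treat $E_e'\circ\Theta'_{s_j}$ as an algebra homomorphism on $\overline{\mathcal{M}'}$; but $E_e'$ is a projection from the direct sum $\Pi'=\bigoplus_{u\in W}\Pi'_u$, so $E_e'\circ\Theta'_{s_j}$ is only specified on generators in the paper and is not a priori multiplicative, which breaks the ``extension to products'' step. Second, applying the compatibility to $P = Q_{w'(\omega_i),q^r}$ tacitly assumes $(E_e'\circ\Theta'_{s_j})(Q_{w'(\omega_i),q^r}) = Q_{w(\omega_i),q^r}$; yet $Q_{w(\omega_i),q^r}$ is defined by applying the full composition $\Theta'_w$ inside $\Pi'$ and projecting once at the end, so inserting the projection $E_e'$ after every simple step is not justified. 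Indeed, stating the compatibility for all $P \in \overline{\mathcal{M}'}$ is too strong: the $T_j$ realise a braid group action (not a Weyl group action as you write — a small slip), whereas the $\Theta'_{s_j}$ satisfy $(\Theta'_{s_j})^2 = \mathrm{id}$, so the two sides cannot agree when $w = s_j w'$ is not reduced (take $P = Q_{s_j(\omega_j),q^r}$ for the same $j$). This means the multiplicative-extension reasoning, which would prove the compatibility unconditionally, must fail somewhere. Closing the gap requires either the explicit formula for the leading monomial $\Psi_{w(\omega_i),q^r}$ from \cite{fh23} together with a proof that it satisfies the braid recursion along reduced decompositions, or a careful analysis of how $\Theta'_{s_j}$ moves between the components $\Pi'_u$; these are exactly the facts the paper defers to \cite{fh23} when it says ``it is a direct computation.''
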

    \begin{proof}
        We only prove the lemma for $w=s_i$. It is a direct computation.\\
        On the left hand side we have:
        \[\Xi(Q_{s_i(\omega_i),q^r})=\Xi(\Theta'_{s_i}(\Psi_{i,q^r}))=\Xi\left(\Psi_{i,q^{r-2}}^{-1}\prod_{j:c_{ij}=-1}\Psi_{j,q^{r-1}}\Sigma_{i,q^{r-2}}\right)=\Psi_{i,q^{r-2}}^{-1}\prod_{j:c_{ij}=-1}\Psi_{j,q^{r-1}}.\] On the right hand side we find:
        \[\sigma\ \circ\ T_i\ \circ \sigma(\Psi_{i,q^r})=\sigma\circ T_i(\Psi_{i,q^{-r}}^{-1})=\sigma\left(\Psi_{i,q^{-r+2}}\prod_{j:c_{ij}=-1}\Psi^{-1}_{j,q^{-r+1}}\right)=\Psi^{-1}_{i,q^{r-2}}\prod_{j:c_{ij}=-1}\Psi_{j,q^{r-1}}\ .\]
    \end{proof}
    Lemma \ref{T gives leading term} can be rephrased saying that the braid group action on $\overline{\mathcal{M}'}$ gives the leading term of the $Q$-variable.
    
    \begin{remark}\label{confronto tra operatori chari}
        Let us call $\mathcal{M}$ the multiplicative group of Laurent monomials in $\ZZ[\Psi_{i,r}^{\pm 1}\mid (i,r)\in V]$. It is a subgroup of $\mathcal{M}'$.
        There is a group isomorphism $\eta:\mathcal{M}\to  \ZZ^{(V)}$ defined by $\Psi_{i,r}\mapsto \boldsymbol{e}_{(i,r)}$. Note that $\Psi_{w(\omega_i),q^r}$ is in $\mathcal{M}$ for every $(i,r)\in V$.\\ Hence, the braid group actions in Definition \ref{azione braid su base standard} and in \eqref{chari braid T} are related as follows:
        \[\eta\ \circ\ \sigma\ \circ T_w\ \circ \sigma (\Psi_{i,q^r})=\Theta_w\ (\boldsymbol{e}_{(i,r)}).\]
    \end{remark}

    In \cite[Section 7.2]{ghl} normalized $Q$-variables are introduced. They are denoted 
    \[\underline{Q}_{w(\omega_i),q^r}\] for every $w\in W$ and $(i,r)\in V$ and they are elements of $K_{\ZZ}$. 
    We will refer to these elements as $\underline{Q}$-variables. The following relations satisfied by the $\underline{Q}$-variables are known as $QQ$-systems and they play a fundamental role in the proof of Theorem \ref{teo 7.4 ghl}. 
    \begin{theorem}[{\cite[Proposition 7.2]{ghl}}]\label{qq system}
    For $(i,r)\in V$ and $w\in W$ such that $ws_i>w$, we have:
    \begin{equation}
        \underline{Q}_{ws_i(\omega_i),q^r}\underline{Q}_{w(\omega_i),q^{r-2}}-\underline{Q}_{ws_i(\omega_i),q^{r-2}}\underline{Q}_{w(\omega_i),q^r}=\prod_{j:c_{ij}=-1}\underline{Q}_{w(\omega_j),q^{r-1}}.
    \end{equation}
    \end{theorem}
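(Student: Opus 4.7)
The plan is to establish the identity in two steps: first for the base case $w = e$, and then extend it by Weyl group equivariance to the general case $ws_i > w$.

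For the base case $w = e$, I would exploit the explicit realization of the relevant $Q$-variables inside $\widetilde{\mathcal{Y}}'$. From the definition of $\Theta'_{s_i}$ (and Remark \ref{q var hanno hw}) we have
\begin{align*}
Q_{\omega_i, q^r} &= \Psi_{i,q^r}, \\
Q_{s_i(\omega_i), q^r} &= \widetilde{\Psi}_{i,q^{r-1}}\, \Sigma_{i,q^{r-2}}, \\
Q_{\omega_j, q^{r-1}} &= \Psi_{j,q^{r-1}} \quad (j \neq i).
\end{align*}
Combined with the defining recursion $\Sigma_{i,q^{s}} = 1 + A_{i,q^{s}}^{-1}\Sigma_{i,q^{s-2}}$ and the explicit formula for $A_{i,q^r}$, the identity reduces to an explicit computation in $\widetilde{\mathcal{Y}}'$. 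The key telescoping is that the difference $\Sigma_{i,q^{r-2}} - (\text{shifted copy})\,\Sigma_{i,q^{r-4}}$ produces precisely the missing factor $\prod_{j: c_{ij}=-1}\Psi_{j,q^{r-1}}$ from $\widetilde{\Psi}_{i,\,\cdot}$, up to a scalar in $[\underline{P}]$ absorbed by the normalization $Q \mapsto \underline{Q}$.

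For the general case, I would apply $\Theta'_w$ to the $w=e$ identity. The hypothesis $ws_i > w$ is exactly the reduced-word condition that ensures $\Theta'_w \circ \Theta'_{s_i} = \Theta'_{ws_i}$, so the first term on the left transforms into $Q_{ws_i(\omega_i), q^r}$. The other $Q$-variables transform according to $\Theta'_w(\Psi_{j,q^s}) = Q_{w(\omega_j), q^s}$ by definition. Passing then to the normalized variables $\underline{Q}$ yields the stated $QQ$-relation.

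The main obstacle I anticipate is tracking the normalizing scalars in $[\underline{P}]$ that distinguish $\underline{Q}$ from $Q$: these corrections depend on the weight $w(\omega_i)$, and one must verify that they combine correctly on both sides of the relation. This amounts to an identity in the group algebra $[\underline{P}]$ that follows from the reflection formulas $s_i(\omega_i) = \omega_i - \alpha_i$ and $s_i(\omega_j) = \omega_j$ for $j \neq i$. Concretely, on the left-hand side the total weight carried by the product $\underline{Q}_{ws_i(\omega_i),\cdot}\,\underline{Q}_{w(\omega_i),\cdot}$ is $w(\omega_i) + ws_i(\omega_i) = w(2\omega_i - \alpha_i) = \sum_{j: c_{ij}=-1} w(\omega_j)$, which matches exactly the weight on the right-hand side. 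Once this weight bookkeeping is checked, the algebraic identity from the $w=e$ step propagates unchanged.
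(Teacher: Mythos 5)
The paper does not actually prove this statement; it is cited verbatim from \cite[Prop.~7.2]{ghl} and \cite{fh23}, and the only verification in the paper is the explicit $A_1$ computation in Example~\ref{q variabili A1}. So I can only assess your outline on its own terms against the paper's framework.

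Your base case is fine: the telescoping via $\Sigma_{i,q^s}=1+A_{i,q^s}^{-1}\Sigma_{i,q^{s-2}}$ together with the Laurent-monomial expression for $A_{i,q^{r-2}}^{-1}$ is exactly what makes the $\Sigma_{i,q^{r-4}}$ terms cancel, and the $[\underline{P}]$-bookkeeping via $s_i(\omega_i)=\omega_i-\alpha_i$ is correct. This is the same mechanism carried out in Example~\ref{q variabili A1} for $A_1$.

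Two concrete issues with the general-$w$ step.

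First, your stated rationale for the hypothesis $ws_i>w$ is off. The paper (following \cite{fh23}) asserts that the $\Theta'_{s_i}$ generate a genuine \emph{Weyl group} action on $\Pi'$, not merely a braid group action. Consequently $\Theta'_w\circ\Theta'_{s_i}=\Theta'_{ws_i}$ holds unconditionally, with no reduced-word condition needed. The hypothesis $ws_i>w$ plays a different role: it fixes the sign/ordering of the $2\times 2$ determinant on the left-hand side. If $ws_i<w$, interchanging $w$ and $ws_i$ negates the determinant while leaving the right-hand side unchanged, so without the hypothesis the formula would be wrong by a sign.

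Second, and more substantially, the step ``apply $\Theta'_w$ to the $w=e$ identity'' glosses over the projection $E'_e$. The variable $Q_{w(\omega_i),q^r}$ is \emph{defined} as $(E'_e\circ\Theta'_w)(\Psi_{i,q^r})$, i.e.\ the $e$-component of $\Theta'_w(\Psi_{i,q^r})\in\Pi'$, not $\Theta'_w(\Psi_{i,q^r})$ itself. The base-case identity lives in $\widetilde{\mathcal{Y}}'$, and to transport it you need, at minimum, (i) that $E'_e\circ\Theta'_w$ is a \emph{ring} homomorphism where you apply it (the identity involves a difference, so a group homomorphism on $\overline{\mathcal{M}'}$ is not enough), and (ii) that the projections compose: $(E'_e\circ\Theta'_w)\bigl((E'_e\circ\Theta'_{s_i})(\Psi_{i,q^r})\bigr)=(E'_e\circ\Theta'_{ws_i})(\Psi_{i,q^r})$, i.e.\ that the inner $E'_e$ can be dropped. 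Neither of these is automatic, and (ii) in particular is precisely the nontrivial compatibility between the Weyl group action on $\Pi'$ and the projection onto $\widetilde{\mathcal{Y}}'$ that the proof in \cite{fh23} has to address. As written, your argument treats $\Theta'_w$ as if it were an endomorphism of $\widetilde{\mathcal{Y}}'$ sending $Q_{u(\omega_i),\cdot}$ to $Q_{wu(\omega_i),\cdot}$, which is exactly the content one must establish, not assume.
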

    \begin{example}\label{q variabili A1}
        In type $A_1$, i.e. $\g=\sl_2$, we have $I=\{1\}$, so we can drop the index $1$; the Weyl group consists of two elements, namely the identity $e$ and the simple reflection $s$. We have
        \[Q_{\omega,q^r}=\Psi_{q^r},\qquad Q_{s(\omega),q^r}=\Theta'_s(\Psi_{q^r})=\Psi_{q^{r-2}}^{-1}\Sigma_{q^{r-2}}.\] The normalized variables are:
        \[\underline{Q}_{\omega,q^r}=\Big[-\frac{r}{2}\omega\Big]\Psi_{q^r},\qquad \underline{Q}_{s(\omega),q^r}=\Big[\frac{r-2}{2}\omega\Big]\Psi_{q^{r-2}}^{-1}\Sigma_{q^{r-2}}.\] The $QQ$-system reads:
        \begin{align*}
    \underline{Q}_{s(\omega),q^r}\underline{Q}_{\omega,q^{r-2}}-\underline{Q}_{s(\omega),q^{r-2}}\underline{Q}_{\omega,q^r} &= \Big[-\frac{r-2}{2}\omega\Big]\Psi_{q^{r-2}}\Big[\frac{r-2}{2}\omega\Big]\Psi_{q^{r-2}}^{-1}\Sigma_{q^{r-2}}-\\&\quad -\Big[-\frac{r}{2}\omega\Big]\Psi_{q^r}\Big[\frac{r-4}{2}\omega\Big]\Psi_{q^{r-4}}\Sigma_{q^{r-4}}\\
    &=\Sigma_{q^{r-2}}-[-\alpha]\Psi_{q^r}\Psi_{q^{r-4}}^{-1}\Sigma_{q^{r-4}}\\
    &=1+A_{q^{r-2}}^{-1}\Sigma_{q^{r-4}}-A_{q^{r-2}}^{-1}\Sigma_{q^{r-4}}=1,
        \end{align*}
        where we have used the fact that the simple root $\alpha$ is equal to $2\omega$, $A_{q^{r-2}}=[\alpha]\Psi_{q^{r-4}}\Psi_{q^r}^{-1}$ and that $\Sigma_{a}=1+A_{a}^{-1}\Sigma_{aq^{-2}}$.
    \end{example}

    \subsubsection{Cluster structure on $K_0(\mathcal{O}^{\sh}_{\ZZ})$}
    Consider the quiver $\Gamma_c$ that serves as initial seed for the cluster algebra $\mathcal{A}_{w_0}$. Then the initial cluster variables $x_{(i,r)}$, $(i,r)\in V$ are parameterized by their stabilized $g$-vectors and by Proposition \ref{g vector braid action} and Remark \ref{remark g vector braid action}, there exist $s\in \ZZ$ and $t\in\{0,..,N\}$ such that these $g$-vectors can be written as
    \[g^{(\infty)}_{(i,r)}=\Theta_{i_1}\cdots\Theta_{i_t}\left(\boldsymbol{e}_{(i,m_i)}\right)[s].\] Let $L$ be the ring of Laurent polynomials in the initial cluster variables of $\mathcal{A}_{w_0}$; by Laurent phenomenon, $\mathcal{A}_{w_0}\hookrightarrow L$.
    \begin{theorem}[{\cite[Theorem 7.4]{ghl}}]\label{teo 7.4 ghl}
        For $(i,r)\in V$, let $x_{(i,r)}$ be the initial cluster variable of $\mathcal{A}_{w_0}$ with stabilized $g$-vector 
        \[g^{(\infty)}_{(i,r)}=\Theta_{i_1}\cdots\Theta_{i_t}\left(\boldsymbol{e}_{(i,m_i)}\right)[s]\]
    (as in Remark \ref{remark g vector braid action}). Then the assignment
        \begin{equation}
          x_{(i,r)}  \ \mapsto\ \underline{Q}_{s_{i_1}\cdots s_{i_t}(\omega_i),q^{m_i+2s}}
        \end{equation}
        extends to an injective ring homomorphism $F:L\to K_{\ZZ}$ and the topological closure of $[P_{\QQ}]\otimes_{\ZZ}F(\mathcal{A}_{w_0})$ is equal to $K_{\ZZ}$.
    \end{theorem}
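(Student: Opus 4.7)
The statement splits naturally into three tasks: constructing the map $F$, proving it is injective, and establishing the density assertion.

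Since $L$ is the Laurent polynomial ring on the algebraically independent generators $\{x_{(i,r)}\}_{(i,r)\in V}$, a ring homomorphism $F:L\to K_{\ZZ}$ is determined by prescribing units of $K_{\ZZ}$ as the images of the $x_{(i,r)}$. The $\underline{Q}$-variables of the statement lie in $K_{\ZZ}$ and are units there by \cite[Proposition 7.1]{ghl}, so $F$ extends uniquely. To establish injectivity, I would apply the truncation morphism $\Xi$ of Definition \ref{def truncation} to each $F(x_{(i,r)})$. By Remark \ref{q var hanno hw}, $\Xi(Q_{w(\omega_i),q^r}) = \Psi_{w(\omega_i),q^r}$, a Laurent monomial in the $\Psi$-variables. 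Combining Lemma \ref{T gives leading term} with Remark \ref{confronto tra operatori chari} and Proposition \ref{g vector braid action}, the exponent vector of this leading monomial (up to a $[\underline{P}]$-factor coming from the normalization $Q\mapsto\underline{Q}$) is precisely the stabilized $g$-vector $g^{(\infty)}_{(i,r)}$. By Theorem \ref{teo stabilized g vectors}, these vectors are the columns of the block-diagonal matrix $G^{(\infty)}$, whose diagonal blocks are products of reflection matrices and therefore invertible over $\QQ$. Linear independence of the $g^{(\infty)}_{(i,r)}$ then forces the leading Laurent monomials of the $F(x_{(i,r)})$ to be algebraically independent, and hence the $F(x_{(i,r)})$ themselves to be algebraically independent in $K_{\ZZ}$. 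This proves injectivity.

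For the density of $[\underline{P}]\otimes_\ZZ F(\mathcal{A}_{w_0})$ in $K_{\ZZ}$, note that $K_{\ZZ}$ is by construction topologically generated over $[\underline{P}]$ by the $\underline{Q}_{w(\omega_i),q^r}$ for $w\in W$ and $(i,r)\in V$. It therefore suffices to realize each such $\underline{Q}$-variable as a cluster variable of $\mathcal{A}_{w_0}$. The bridge between the two sides is the $QQ$-system of Theorem \ref{qq system}: the identity
\[\underline{Q}_{ws_i(\omega_i),q^r}\underline{Q}_{w(\omega_i),q^{r-2}} = \underline{Q}_{ws_i(\omega_i),q^{r-2}}\underline{Q}_{w(\omega_i),q^r} + \prod_{j:\,c_{ij}=-1}\underline{Q}_{w(\omega_j),q^{r-1}}\]
has exactly the binomial shape of a cluster exchange relation, with the factors on the right-hand side supported on the Dynkin neighbors of $i$. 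My strategy is to identify the infinite sequence of mutations given by the quivers $\Gamma_c^{(m)}$ from Remark \ref{remark sullamutazione infinita} with successive applications of the $QQ$-system, using the stabilized $g$-vectors as bookkeeping to decide which cluster variable corresponds to which $\underline{Q}$-variable at each stage. An induction on $m$, and on the position of the mutated vertex inside each $\Gamma_c^{(m)}$, then shows that every $\underline{Q}_{w(\omega_i),q^r}$ eventually appears as a cluster variable of $\mathcal{A}_{w_0}$, yielding the claimed density.

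\textbf{Main obstacle.} The hardest part is the combinatorial verification inside the density step: for each pair $(w,(i,r))$ one must exhibit an explicit finite mutation sequence from $\Sigma_c$ reaching a seed whose exchange relation at a designated vertex is precisely the $QQ$-relation introducing $\underline{Q}_{w(\omega_i),q^r}$, and then check that the new cluster variable is in fact $\underline{Q}_{w(\omega_i),q^r}$ rather than some other element with the same leading term. The injectivity established in the previous step is essential here, because it ensures that the identification of cluster variables with $\underline{Q}$-variables is unambiguous once the leading Laurent monomials agree. The bulk of the technical work lies in carrying out this matching uniformly for every $w\in W$, well beyond the single reduced expression used to build $\Gamma_c$.
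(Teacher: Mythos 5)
This theorem is imported verbatim from \cite[Theorem 7.4]{ghl}; the present paper restates it without proof, so there is no in-paper argument to compare against. Your decomposition and toolkit (truncation, stabilized $g$-vectors, $QQ$-systems as exchange relations) are the right ones from \cite{ghl}.

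Two gaps remain. On injectivity: multiplicative independence of the leading monomials $\Xi(Q_v)$ does not by itself give algebraic independence of the $\underline{Q}_v$. You need the standard partial order on $\ell$-weight monomials under which each $\underline{Q}_v$ equals its leading monomial times $1$ plus strictly lower terms, and then the maximal-leading-term argument: in a putative Laurent relation among the $\underline{Q}_v$, pick a maximal occurring leading monomial $\prod_v\Xi(Q_v)^{n_v}$ (in that partial order); its coefficient in the expansion is exactly the corresponding Laurent coefficient, since lower-order corrections of other monomials cannot reach it, forcing all coefficients to vanish. Without this order and the ``leading monomial times $1+(\text{lower})$'' structure of the $\underline{Q}_v$, the inference from independent $g$-vectors to algebraic independence does not go through. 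On density: the green-vertex mutations $\Gamma_c\to\Gamma_c^{(1)}\to\cdots$ only push the irregular band downward, so they realize $\underline{Q}_{w(\omega_i),q^r}$ with $w\neq e,w_0$ only for $r$ in a downward-shrinking window; you also need the inverse mutations at red vertices (giving $\Gamma_c^{(-m)}$) to reach large $r$, and you should verify that the truncations of the single adapted reduced word for $w_0$ already sweep out the full Weyl orbit of each $\omega_i$. Both points are repairable, but they are precisely where the substance of the proof lies, and your sketch does not carry them out.
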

    It is explained in \cite[Remark 7.6]{ghl} that the homomorphism $F$ is well-defined, i.e. it does not depend on the choice of the Coxeter element, or equivalently of the initial seed of $\mathcal{A}_{w_0}$.

    In Section 3 we have recalled the definition of the category $\mathcal{O}^{\sh}$. Now we consider a subcategory of it.
    \begin{definition}\cite[Definition 9.13]{ghl}
        Let $\mathcal{O}^{\sh}_{\ZZ}$ be the full subcategory of $\mathcal{O}^{\sh}$ whose simple constituents are of the form $L[\Psi]$, where $\Psi$ is a Laurent monomial in the variables $[\lambda]$, $\lambda\in P_{\QQ}$, and $\Psi_{i,q^r}$, $(i,r)\in V$.
    \end{definition}
    Thus, the Grothendieck ring $ K_0(\mathcal{O}^{\sh}_{\ZZ})$ is the $[P_{\QQ}]$-topological subspace of $ K_0(\mathcal{O}^{\sh})$ generated by the simple classes $L[\Psi]$ with $\Psi\in \ZZ[\Psi_{i,q^r}^{\pm}]$. Recall the definition of $\mathcal{E}_{\ell,\ZZ}$. Then we get a result analogous to Proposition \ref{iso q car}:
    \begin{proposition}[{\cite[Proposition 9.14]{ghl}}]\label{q car iniettivo in o shift}
        $ K_0(\mathcal{O}^{\sh}_{\ZZ})$ is a subring of $ K_0(\mathcal{O}^{\sh})$ and the homomorphism $\chi_q$ restricts to an isomorphism
        \[\chi_q: K_0(\mathcal{O}^{\sh}_{\ZZ})\xrightarrow{\simeq}\mathcal{E}_{\ell,\ZZ}.\]
    \end{proposition}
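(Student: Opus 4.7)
The ambient $q$-character map $\chi_q : K_0(\mathcal{O}^{sh}) \xrightarrow{\simeq} \mathcal{E}_{\ell}$ of Proposition \ref{iso q car} is already a ring isomorphism, so the plan is to identify $K_0(\mathcal{O}^{sh}_{\ZZ})$ with the preimage $\chi_q^{-1}(\mathcal{E}_{\ell,\ZZ})$. Note first that $\mathcal{E}_{\ell,\ZZ}$ is a $[P]$-subring of $\mathcal{E}_{\ell}$ by construction: its topological generators $[\Psi_{i,q^r}^{\pm 1}]$ and $[\lambda]$ are manifestly closed under the product $[\Psi][\Psi']=[\Psi\Psi']$. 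Once the bijection $\chi_q : K_0(\mathcal{O}^{sh}_{\ZZ}) \xrightarrow{\simeq} \mathcal{E}_{\ell,\ZZ}$ is established, the first assertion that $K_0(\mathcal{O}^{sh}_{\ZZ})$ is a subring of $K_0(\mathcal{O}^{sh})$ will follow by transport of structure through $\chi_q$.

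The first main step is the containment $\chi_q(K_0(\mathcal{O}^{sh}_{\ZZ}))\subseteq \mathcal{E}_{\ell,\ZZ}$. By continuity and linearity of $\chi_q$, it suffices to treat each simple class $[L(\Psi)]$ with $\Psi$ a Laurent monomial in the $[\lambda]$ and $\Psi_{i,q^r}$. The key input, analogous to Frenkel-Mukhin's theorem for finite-dimensional quantum affine representations and adapted in \cite{sqaahernandez} to the shifted category $\mathcal{O}^{sh}$, is that every $\ell$-weight $\Psi'$ occurring in $\chi_q(L(\Psi))$ is of the form $\Psi'=\Psi\cdot\prod_{(i,s)} A_{i,q^s}^{-n_{i,s}}$ with $n_{i,s}\in\NN$ almost all zero. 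Since each $A_{i,q^s}$ is itself a Laurent monomial in the $\Psi_{j,q^r}$ and $[\lambda]$ by its definition in terms of the $Y$-variables, every such $\Psi'$ is again of the required form, and hence $\chi_q(L(\Psi))\in\mathcal{E}_{\ell,\ZZ}$.

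For surjectivity of $\chi_q|_{K_0(\mathcal{O}^{sh}_{\ZZ})}$ onto $\mathcal{E}_{\ell,\ZZ}$, I use triangular inversion. Given $c=\sum_{\Psi} c(\Psi)[\Psi]\in\mathcal{E}_{\ell,\ZZ}$, recall that $\chi_q(L(\Psi))$ has leading term $[\Psi]$ and all other contributions strictly smaller in the partial order generated by the $A_{i,q^s}$. Order the support of $c$ accordingly: at each stage, pick a maximal $\Psi$ in the current support, subtract $c(\Psi)\chi_q(L(\Psi))$ (which lies in $\mathcal{E}_{\ell,\ZZ}$ by the previous step), and iterate. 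The support condition on $c$, together with the fact that multiplication by $A_{i,q^s}^{-1}$ lowers the weight by a simple root, ensures that only finitely many stages contribute to each fixed weight component, so the process converges in the topology on $K_0(\mathcal{O}^{sh})$ inherited from $\mathcal{E}_{\ell}$ to an element $v=\sum c(\Psi)[L(\Psi)]$ of $K_0(\mathcal{O}^{sh}_{\ZZ})$ with $\chi_q(v)=c$. Injectivity of the restriction is automatic from the injectivity of $\chi_q$ on the full ring.

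The main obstacle is the $A$-monomial description of $\ell$-weights in the shifted setting used in the second paragraph. For finite-dimensional modules over $\mathcal{U}_q(\hat{\g})$ this is Frenkel-Mukhin, and for the Borel category $\mathcal{O}^{\mathfrak{b}}$ it is established in \cite{hj12}; the extension to all of $\mathcal{O}^{sh}$ requires care, typically either by compatibility with the fusion product or by reduction to the Borel case for antidominant shifts (where $\mathcal{U}_q(\hat{\mathfrak{b}})\subset \mathcal{U}_q^{\mu}(\hat{\g})$). Once this is granted, the rest of the proof reduces cleanly to the triangular structure of $q$-characters combined with the fact that $\chi_q$ is already a ring isomorphism on the full Grothendieck ring.
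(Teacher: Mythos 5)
Note first that the paper does not prove this proposition: it is quoted verbatim from \cite[Proposition 9.14]{ghl}, with the only "justification" in the text being the remark that $K_0(\mathcal{O}^{sh}_{\ZZ})$ is by definition the $[P]$-topological span of the classes $[L(\Psi)]$ with $\Psi$ a Laurent monomial in the $\Psi_{i,q^r}$ and $[\lambda]$, so that the statement becomes "analogous" to Proposition~\ref{iso q car}. There is therefore no internal proof to compare against, and I am evaluating your attempt on its own.

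Your strategy---identifying $K_0(\mathcal{O}^{sh}_{\ZZ})$ with $\chi_q^{-1}(\mathcal{E}_{\ell,\ZZ})$ under the ambient isomorphism of Proposition~\ref{iso q car}---is the right one, and the triangular-inversion argument for surjectivity is standard and valid once the containment is established. The ring structure transported through $\chi_q$ then gives the "subring" assertion, as you say. The substantive content, which you correctly single out as the main obstacle, is the containment $\chi_q(L(\Psi))\in\mathcal{E}_{\ell,\ZZ}$ for $\Psi$ a Laurent monomial in the $\Psi_{i,q^r}$ and $[\lambda]$. This is exactly the $A$-monomial (Frenkel--Mukhin type) description of $\ell$-weights of simples in $\mathcal{O}^{sh}$, together with the observation that the bipartite parity of $V$ is preserved under multiplication by the relevant $A_{i,q^s}^{-1}$. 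In \cite{ghl} and \cite{sqaahernandez} this input is available, e.g. via the explicit $q$-characters of negative prefundamental representations and compatibility with the fusion product, or via restriction to $\mathcal{U}_q(\hat{\mathfrak{b}})$ when the shift is antidominant and the fact that the simple objects are subquotients of fusion products of prefundamentals and constant representations. If you make that citation precise, your argument closes. One small caution on the surjectivity step: the finiteness needed to make the triangular inversion converge in the topology on $\mathcal{E}_\ell$ is not purely a weight-cone argument (many distinct $\ell$-weights can share the same image under $\varpi$); it is more economical to observe that this convergence is already packaged in the surjectivity of $\chi_q$ in Proposition~\ref{iso q car}, so you only need to check that the top term of the resulting preimage has the right form and iterate, rather than reprove convergence from scratch.
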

    All the generators of $K_{\ZZ}$ are contained in $\mathcal{E}_{\ell,\ZZ}$, hence we can identify $K_{\ZZ}$ with $\mathcal{E}_{\ell,\ZZ}$. So, using this, Proposition \ref{q car iniettivo in o shift} and Theorem \ref{teo 7.4 ghl}, we have the following result.
    \begin{theorem}[{\cite[Theorem 9.15]{ghl}}]\label{iso ghl cluster}
        There is an injective ring homomorphism
        \[I:\mathcal{A}_{w_0}\to K_0(\mathcal{O}^{\sh}_{\ZZ})\] and the topological closure of $[P_{\QQ}]\otimes_{\ZZ}I(\mathcal{A}_{w_0})$ is the entire topological ring $K_0(\mathcal{O}^{\sh}_{\ZZ})$.
    \end{theorem}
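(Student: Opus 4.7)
The plan is to assemble the statement directly from Theorem~\ref{teo 7.4 ghl} and Proposition~\ref{q car iniettivo in o shift}, exploiting the identification of $K_{\ZZ}$ with $[\underline{P}]\otimes_{[P]}\mathcal{E}_{\ell,\ZZ}$ mentioned in the paragraph just before the statement. The hard work has already been done upstream: (i) the cluster-theoretic injection $F:L\to K_{\ZZ}$ realizes initial cluster variables of $\mathcal{A}_{w_0}$ as normalized $\underline Q$-variables, and (ii) $\chi_q$ identifies $K_0(\mathcal{O}^{sh}_{\ZZ})$ with $\mathcal{E}_{\ell,\ZZ}$ as topological rings. What remains is a careful chain of identifications.

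First, I would restrict $F$ from Theorem~\ref{teo 7.4 ghl} to the subring $\mathcal{A}_{w_0}\hookrightarrow L$ (a subring by the quantum Laurent phenomenon); since $F$ is injective, so is its restriction $F|_{\mathcal{A}_{w_0}}$. Next I would verify that $K_{\ZZ}$ may be identified with $[\underline{P}]\otimes_{[P]}\mathcal{E}_{\ell,\ZZ}$: each generator $\underline{Q}_{w(\omega_i),q^r}$ of $K_{\ZZ}$ is a product of an element of $[\underline{P}]$ with the $Q$-variable $Q_{w(\omega_i),q^r}$, and by Remark~\ref{q var hanno hw} the latter lives in $\mathcal{E}_{\ell,\ZZ}$. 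This already shows that $K_{\ZZ}\subseteq[\underline{P}]\otimes_{[P]}\mathcal{E}_{\ell,\ZZ}$; the reverse inclusion follows because all $\Psi_{i,q^r}$ are themselves $Q$-variables (of the form $Q_{\omega_i,q^r}$), so $\mathcal{E}_{\ell,\ZZ}\subseteq K_{\ZZ}$.

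Then I would scalar-extend the $q$-character isomorphism of Proposition~\ref{q car iniettivo in o shift} by $[\underline{P}]$ over $[P]$ to obtain
\[
[\underline{P}]\otimes_{[P]}\mathcal{E}_{\ell,\ZZ} \;\xrightarrow{\,\simeq\,}\; [\underline{P}]\otimes_{[P]} K_0(\mathcal{O}^{sh}_{\ZZ}),
\]
and compose with $F|_{\mathcal{A}_{w_0}}$ to produce the desired injection $I:\mathcal{A}_{w_0}\to [\underline{P}]\otimes_{[P]}K_0(\mathcal{O}^{sh}_{\ZZ})$. For the density claim, Theorem~\ref{teo 7.4 ghl} already asserts that the topological closure of $[\underline{P}]\otimes_{\ZZ}F(\mathcal{A}_{w_0})$ is $K_{\ZZ}$; transporting this statement along the chain of isomorphisms above yields that the topological closure of $[\underline{P}]\otimes_{[P]}I(\mathcal{A}_{w_0})$ is the entire $[\underline{P}]\otimes_{[P]}K_0(\mathcal{O}^{sh}_{\ZZ})$.

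The only point that requires some care, rather than being a genuine obstacle, is reconciling the two tensor-product conventions ($\otimes_{\ZZ}$ appearing in Theorem~\ref{teo 7.4 ghl} versus $\otimes_{[P]}$ in the target of $I$). This amounts to checking that the $[P]$-action on $\mathcal{E}_{\ell,\ZZ}$ induced by multiplication by constant $\ell$-weights $[\lambda]$ corresponds, under $\chi_q^{-1}$, to multiplication by the classes of the one-dimensional constant representations $L([\lambda])$ of Example~\ref{rappresentazione costante}; this is immediate from the definition of the ring structure on $\mathcal{E}_{\ell,\ZZ}$ and from the fact that $\chi_q(L([\lambda]))=[\lambda]$.
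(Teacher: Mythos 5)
Your proof is correct and is essentially the paper's own argument: the paragraph preceding the theorem identifies $K_{\ZZ}$ with $[\underline{P}]\otimes_{[P]}\mathcal{E}_{\ell,\ZZ}$ and invokes Theorem~\ref{teo 7.4 ghl} together with Proposition~\ref{q car iniettivo in o shift}, which is exactly the chain of identifications you spell out. One minor slip worth fixing: $\mathcal{A}_{w_0}$ is a classical cluster algebra, so the inclusion $\mathcal{A}_{w_0}\hookrightarrow L$ follows from the ordinary Laurent phenomenon, not the quantum one.
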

    \begin{remark}\label{immagine della q var}
        Note that 
        \[\chi_q(L_{i,q^r}^+)=\Psi_{i,q^r}=Q_{\omega
        _i,q^r},\] and by definition 
        \[\underline{Q}_{\omega_i,q^r}=\left[-\frac{r}{2}\omega_i\right]Q_{\omega_i,q^r}.\]
        Thus in Theorem \ref{iso ghl cluster}, the variable $\underline{Q}_{\omega_i,q^r}$ of $\mathcal{A}_{w_0}$, is sent to the class $\left[-\frac{r}{2}\omega_i\right][L_{i,q^r}^+]$ in $K_0(\mathcal{O}^{\sh}_{\ZZ})$.
    \end{remark}
    For completeness, we point out the following
    \begin{remark}
       Let $\mathcal{O}^{\sh,f}_{\ZZ}$ be the full subcategory of $\mathcal{O}^{\sh}_{\ZZ}$ whose objects are finite-length modules. In \cite[Corollary 6.4]{hz25}, Hernandez and Zhang prove that $ K_0(\mathcal{O}^{\sh,f}_{\ZZ})$ is a subring of $ K_0(\mathcal{O}^{\sh}_{\ZZ})$. Moreover, \cite[Conjecture 9.16]{ghl} states that the images through $I$ of all cluster variables in $\mathcal{A}_{w_0}$ are simple representations in $\mathcal{O}^{\sh}_{\ZZ}$. Thus, in \cite[Conjecture 6.5]{hz25} the authors conjecture that $I([\underline{P}]\otimes_{\ZZ}\mathcal{A}_{w_0})$ is in fact the entire $ K_0(\mathcal{O}^{\sh,f})$. At the moment the Conjecture is proven only for $\g=\sl_2$.
    \end{remark}

    \begin{notation}\label{notation v}
        By Theorem \ref{teo 7.4 ghl}, we can assign to each cluster variable in $\mathcal{A}_{w_0}$ in the initial seed $\Sigma_c$ a unique $\underline{Q}$-variable in $K_{\ZZ}$ that depends on the stabilized $g$-vector of such cluster variable. In order to make the notation more readable, we will denote $\underline{Q}_v$ the $\underline{Q}$-variable associated with the initial cluster variable at vertex $v\in V$ and $g^{(\infty)}_v$ its stabilized $g$-vector. We denote $Q_v$ the corresponding non-normalized $Q$-variable. More in general we will denote an element of the set $V$ by $v$ or by $(i,r)$, depending on the need.
    \end{notation}
    In the next lemma we clarify how the embedding $\mathcal{A}_e\hookrightarrow\mathcal{A}_{w_0}$ works.
    Thanks to Theorem \ref{teo 7.4 ghl}, we consider the set $\{\underline{Q}_v\}_{v\in V}$ as initial cluster variables for $\mathcal{A}_{w_0}$.
    \begin{lemma}\label{zzz}
        Let $\{z_{(i,r)}\}_{(i,r)\in V}$ be the initial cluster of $\mathcal{A}_e$ for the initial seed $\Sigma_e$. The assignment
        \begin{equation}
            z_{(i,r)}\mapsto \underline{Q}_{\omega_i,q^r} 
        \end{equation} 
        extends uniquely to an embedding of cluster algebras \[i:\mathcal{A}_e\hookrightarrow\mathcal{A}_{w_0}.\]
    \end{lemma}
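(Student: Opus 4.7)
The plan is to use the iterated green mutations $\Sigma_c^{(k)}$ from Remark \ref{remark sullamutazione infinita} and Equation \eqref{successione di quiver} as a bridge between $\Sigma_c$ and $\Sigma_e$: in a first step each $\underline{Q}_{\omega_i,q^r}$ will be identified as an initial cluster variable of $\Sigma_c^{(k)}$ for all $k$ sufficiently large; in a second step any finite mutation sequence starting from $\Sigma_e$ will be lifted to one starting from $\Sigma_c^{(k)}$ for $k$ large, and the morphism $i$ will be obtained by the induced substitution $z_v\mapsto \underline{Q}_{\omega_i,q^r}$ in Laurent expressions of cluster variables of $\mathcal{A}_e$.

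The key claim of the first step is that for each $(i,r)\in V$ there exists $k_0\geq 0$ such that for every $k\geq k_0$, the cluster variable of $\Sigma_c^{(k)}$ at the vertex $(i,r)$ is exactly $\underline{Q}_{\omega_i,q^r}$. Indeed, pick $k_0$ large enough that $(i,r)$ lies strictly above the irregular (red/green) part of $\Gamma_c^{(k_0)}$: since $\Sigma_c^{(k_0)}$ is a seed of $\mathcal{A}_{w_0}$ whose quiver is a downward translation of $\Gamma_c$, Theorem \ref{teo 7.4 ghl} applied to it identifies the cluster variable at $(i,r)$ as $\underline{Q}_{\omega_i,q^r}$, and at subsequent steps $k\geq k_0$ the vertex $(i,r)$ is untouched by the green mutations $\mu_{\mathrm{grn}}^{(k)}$, so this value persists. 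In particular, $\underline{Q}_{\omega_i,q^r}\in\mathcal{A}_{w_0}$ for every $(i,r)\in V$, and any finite subset of $\{\underline{Q}_{\omega_i,q^r}\}_{(i,r)\in V}$ lies in the initial cluster of a common $\Sigma_c^{(k)}$, hence is algebraically independent over $\ZZ$.

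For the second step, let $x$ be any cluster variable of $\mathcal{A}_e$, obtained from $\Sigma_e$ by a finite mutation sequence $\mu$ supported on a finite set $S\subset V$. For $k$ large enough that $S$ lies above the irregular part of $\Gamma_c^{(k)}$, the full subquiver of $\Gamma_c^{(k)}$ on $S$ agrees with that of $\Gamma_e$ on $S$ (this is the precise meaning of $\lim_{m\to\infty}\Gamma_c^{(m)}=\Gamma_e$), and by the first step the corresponding initial cluster variables of $\Sigma_c^{(k)}$ are $\{\underline{Q}_{\omega_i,q^r}\}_{(i,r)\in S}$. Performing $\mu$ on $\Sigma_c^{(k)}$ then yields a cluster variable of $\mathcal{A}_{w_0}$ whose Laurent expression in these initial variables is the substitution of the Laurent expression of $x$ in $\{z_v\}_{v\in S}$. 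By the algebraic independence noted above the output is independent of the (sufficiently large) choice of $k$, so this assignment defines the required ring morphism $i:\mathcal{A}_e\to\mathcal{A}_{w_0}$; uniqueness follows from the algebraic independence of $\{z_v\}_{v\in V}$ in $\mathcal{A}_e$, and injectivity from that of $\{\underline{Q}_{\omega_i,q^r}\}_{(i,r)\in V}$ in $\mathcal{A}_{w_0}$. The real content of the argument is concentrated in the first step, where the stabilization of $g$-vectors (Theorem \ref{teo stabilized g vectors}) and the parametrization of cluster variables of $\mathcal{A}_{w_0}$ by $\underline{Q}$-variables (Theorem \ref{teo 7.4 ghl}) do the work; once the regular part of $\Sigma_c^{(k)}$ is identified with $\Sigma_e$ at the level of cluster variables, the lifting of mutations is purely formal.
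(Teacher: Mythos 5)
Your proof is correct and follows essentially the same approach as the paper: both arguments hinge on the observation that any finite subfamily of the $\underline{Q}_{\omega_i,q^r}$ lies in a common seed $\Sigma_c^{(k)}$ for $k$ large enough, so that finite mutation sequences on $\Sigma_e$ can be mirrored verbatim in $\mathcal{A}_{w_0}$. The only cosmetic difference is that you build the morphism directly by substitution and deduce injectivity from algebraic independence of the target variables, whereas the paper phrases the same idea by passing through a morphism of fraction fields; you are also a bit more explicit than the paper in verifying, via Theorems \ref{teo stabilized g vectors} and \ref{teo 7.4 ghl}, that the cluster variable of $\Sigma_c^{(k)}$ sitting at $(i,r)$ above the irregular part is indeed $\underline{Q}_{\omega_i,q^r}$.
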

    \begin{proof} 
        We first verify that the $\underline{Q}_{\omega_i,q^r}$ are algebraically independent in $\mathcal{A}_{w_0}$. Indeed, each finite subfamily of these cluster variables is contained in a cluster seed of $\mathcal{A}_{w_0}$ (one of the initial seeds 
        considered in \cite{ghl}, that is with an initial quiver of the form $\Gamma_c^{(m)}$ as in \eqref{successione di quiver}). As the cluster variables in a given seed are algebraically independent, this implies this first statement.
        
        In particular, the subalgebra of $\mathcal{A}_e$ generated by the $z_{(i,r)}$ embeds in $\mathcal{A}_{w_0}$ with the assignment given in the statement. Let us denote by $i$ this embedding. It extends uniquely to a morphism $\tilde{i}$ from the fraction field of $\mathcal{A}_e$ to the fraction field of $\mathcal{A}_{w_0}$. Hence, by the algebraic independence of the variables $\underline{Q}_{\omega_i,q^r}$, we have established the uniqueness of $i$ on $\mathcal{A}_e$.
        
        Now, to prove the existence, we have to show that the image of $\mathcal{A}_e$ by $\tilde{i}$ lies inside $\mathcal{A}_{w_0}$. Consider a cluster variable $\chi$ of $\mathcal{A}_e$. It is obtained from our standard initial seed by a finite number of mutations, which involve only a finite number of vertices. Hence, as in the first paragraph of the current proof, we may consider an initial seed of $\mathcal{A}_{w_0}$ which contains all the corresponding initial cluster variables of the form $\underline{Q}_{\omega_i,q^r}$. Moreover, the arrows between the corresponding vertices are the same for the initial seed of $\mathcal{A}_e$ and for the initial seed of $\mathcal{A}_{w_0}$. Consequently, the exchange relations for our finite sequence of mutations are the same in $\mathcal{A}_e$ as in $\mathcal{A}_{w_0}$. This implies that $\tilde{i}(\chi)$ is a cluster variable of $\mathcal{A}_{w_0}$. Hence the result follows. 
    \end{proof}

    An illustration is given in the Figure \ref{fig:xxx}.
    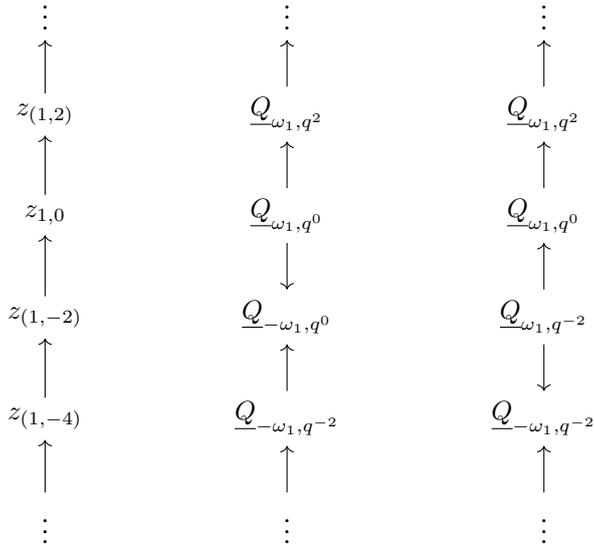
\begin{figure}[H]
        \centering
        
    \[\begin{tikzcd}
    	\vdots && \vdots && \vdots \\
    	{z_{(1,2)}} && {\underline{Q}_{\omega_1,q^2}} && {\underline{Q}_{\omega_1,q^2}} \\
    	{z_{1,0}} && {\underline{Q}_{\omega_1,q^0}} && {\underline{Q}_{\omega_1,q^0}} \\
    	{z_{(1,-2)}} && {\underline{Q}_{-\omega_1,q^0}} && {\underline{Q}_{\omega_1,q^{-2}}} \\
    	{z_{(1,-4)}} && {\underline{Q}_{-\omega_1,q^{-2}}} && {\underline{Q}_{-\omega_1,q^{-2}}} \\
    	\vdots && \vdots && \vdots
    	\arrow[from=2-1, to=1-1]
    	\arrow[from=2-3, to=1-3]
    	\arrow[from=2-5, to=1-5]
    	\arrow[from=3-1, to=2-1]
    	\arrow[from=3-3, to=2-3]
    	\arrow[from=3-3, to=4-3]
    	\arrow[from=3-5, to=2-5]
    	\arrow[from=4-1, to=3-1]
    	\arrow[from=4-5, to=3-5]
    	\arrow[from=4-5, to=5-5]
    	\arrow[from=5-1, to=4-1]
    	\arrow[from=5-3, to=4-3]
    	\arrow[from=6-1, to=5-1]
    	\arrow[from=6-3, to=5-3]
    	\arrow[from=6-5, to=5-5]
    \end{tikzcd}\]
        \caption{From left to right: the initial seed $\Gamma_e$ for $\mathcal{A}_e$, the seed with quiver $\Gamma_c$ for $\mathcal{A}_{w_0}$, the seed with quiver $\Gamma_c^{(1)}$ for $\mathcal{A}_{w_0}$. }
        \label{fig:xxx}
    \end{figure}
    We conclude the section with some considerations about \cite[Section 9.2]{ghl} that are not explicit in \cite{ghl}. Let $\mathcal{C}^{\sh}_{\ZZ}$ be the subcategory of $\mathcal{O}^{\sh}_{\ZZ}$ of finite-dimensional representations.
    
    Then we can consider the Grothendieck ring $ K_0(\mathcal{C}^{\sh}_{\ZZ})$ as a subring of $ K_0(\mathcal{O}^{\sh}_{\ZZ})$.

    The following result is a consequence of \cite[Theorem 8.1]{sqaahernandez}.
    \begin{theorem}[{\cite[Corollary 8.6]{sqaahernandez}}]\label{cor her}
        There is a unique ring isomorphism
        \[K_0(\mathcal{O}^{\mathfrak{b},+}_{\ZZ})\simeq K_0(\mathcal{C}^{\sh}_{\ZZ})\hat{\otimes}_{\ZZ}\mathcal{E}\]
    such that for every $(i,r)\in V$:    
    \begin{equation}\label{lll}
        [L_{i,q^r}^{\mathfrak{b},+}]\mapsto  [L_{i,q^r}^+].
    \end{equation}
    \end{theorem}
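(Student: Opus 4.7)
The plan is to pass through $q$-characters to identify both sides with suitable subrings of the $\ell$-weight ring $\mathcal{E}_{\ell}$, and to construct the isomorphism by matching simple classes together with constant-weight twists.

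On the shifted side, Proposition \ref{q car iniettivo in o shift} gives $\chi_q: K_0(\mathcal{O}^{sh}_{\ZZ}) \xrightarrow{\sim} \mathcal{E}_{\ell,\ZZ}$, and the subring $K_0(\mathcal{C}^{sh}_{\ZZ})$ is identified with the subring of $\mathcal{E}_{\ell,\ZZ}$ generated by $q$-characters of finite-dimensional simples. Crucially, the one-dimensional prefundamental $L_{i,q^r}^+$ has $\chi_q(L_{i,q^r}^+) = [\Psi_{i,q^r}]$, so this subring already contains all monomials $[\Psi_{i,q^r}^{\pm 1}]$. On the Borel side, the analogous $q$-character morphism is injective, and the infinite-dimensional $L_{i,q^r}^{\mathfrak{b},+}$ has $\chi_q(L_{i,q^r}^{\mathfrak{b},+}) = [\Psi_{i,q^r}] \cdot \sigma_{i,r}$, where $\sigma_{i,r} \in \mathcal{E}$ is a formal sum of constant $\ell$-weights that captures the infinite-dimensional tail.

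I would then define the candidate isomorphism $\Phi: K_0(\mathcal{O}^{\mathfrak{b},+}_{\ZZ}) \to K_0(\mathcal{C}^{sh}_{\ZZ}) \hat{\otimes}_{\ZZ} \mathcal{E}$ by sending $[L_{i,q^r}^{\mathfrak{b},+}] \mapsto [L_{i,q^r}^+] \otimes 1$ and the constant class $[\lambda] \in \mathcal{E} \hookrightarrow K_0(\mathcal{O}^{\mathfrak{b},+}_{\ZZ})$ to $1 \otimes [\lambda]$, then extending multiplicatively and continuously. Well-definedness and the ring-homomorphism property should follow from injectivity of both $q$-characters together with the fact that product decompositions of simples in both Grothendieck rings are computed via the same multiplication in $\mathcal{E}_{\ell}$ at the level of characters: if $[V]\cdot[W] = \sum_{\Psi} m_\Psi [L(\Psi)]$ on the Borel side, the multiplicities $m_\Psi$ are determined by $\chi_q(V)\chi_q(W)$, and the same numerical determination governs the corresponding product on the shifted side. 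An inverse is described symbolically by sending $[L_{i,q^r}^+]\otimes 1 \mapsto [L_{i,q^r}^{\mathfrak{b},+}]$ and $1\otimes[\lambda]\mapsto[\lambda]$, and uniqueness follows because the set $\{[L_{i,q^r}^{\mathfrak{b},+}]\}_{(i,r)\in V}$ together with $\mathcal{E}$ topologically generates $K_0(\mathcal{O}^{\mathfrak{b},+}_{\ZZ})$ by Theorem \ref{teorema cluster hl16}.

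The main obstacle I expect is topological: one must ensure that the infinite formal sums on the Borel side, which arise from the infinite-dimensionality of the $L_{i,q^r}^{\mathfrak{b},+}$, correspond under $\Phi$ to convergent elements of the completed tensor product $K_0(\mathcal{C}^{sh}_{\ZZ}) \hat{\otimes}_{\ZZ} \mathcal{E}$. This reduces to verifying that the weight supports involved lie in a finite union of weight cones, so that the two completion topologies match; this is encoded in the very definition of $\mathcal{E}$ and the integrality condition $(i,r)\in V$. A careful weight-support bookkeeping, building on the cone-support definition of $\mathcal{E}_\ell$ recalled in Section \ref{section borel}, should settle this compatibility.
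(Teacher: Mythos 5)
The paper does not give a proof of this statement at all: it is quoted verbatim from \cite[Corollary 8.3]{sqaahernandez}, and the surrounding text only says that it ``is a consequence of \cite[Theorem~8.1]{sqaahernandez}.'' So there is no in-paper argument to compare against; one can only ask whether your sketch would succeed as a standalone proof. In its present form it would not, because the crucial step is asserted rather than established.

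The gap is concentrated in the sentence claiming that ``the same numerical determination governs the corresponding product on the shifted side.'' The two $q$-character morphisms are indeed injective, so on each side the simple-multiplicities of a product are \emph{determined} by the $q$-character of the product. But injectivity alone gives no relation between the two decompositions, precisely because the simple objects labelled by the same $\ell$-weight $\Psi$ have drastically different $q$-characters on the two sides: $L_{i,q^r}^{\mathfrak{b},+}$ is infinite-dimensional with $\chi_q = [\Psi_{i,q^r}]\cdot\sigma_{i,r}$ while $L_{i,q^r}^{+}$ is one-dimensional with $\chi_q = [\Psi_{i,q^r}]$. Therefore your proposed map $\Phi$ does \emph{not} commute with the two $q$-characters, and one cannot transport the multiplicity data from one side to the other by matching elements of $\mathcal{E}_{\ell}$. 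Proving that the multiplicities nevertheless agree is exactly the nontrivial representation-theoretic content of \cite[Theorem~8.1]{sqaahernandez}, so invoking it here would be circular, and asserting it as a ``fact'' leaves the proof empty at its load-bearing step.

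If you want a self-contained argument using only results available in this paper, the route is through the shared cluster structure rather than through naive $q$-character comparison: Theorem~\ref{teorema cluster hl16} identifies $K_0(\mathcal{O}^{\mathfrak{b},+}_{\ZZ})$ with $\mathcal{A}_e\hat{\otimes}_{\ZZ}\mathcal{E}$, sending the initial cluster variable $z_{i,r}$ to $[L_{i,q^r}^{\mathfrak{b},+}]$, and the last remark of Section~\ref{section on cluster structure on gr rings} records that $K_0(\mathcal{C}^{sh}_{\ZZ})\simeq\mathcal{A}_e$, sending $z_{i,r}$ to $[L_{i,q^r}^{+}]$. Composing these two identifications (and completing) yields the stated isomorphism and the assignment~\eqref{lll} immediately; uniqueness follows because the $z_{i,r}$ together with $\mathcal{E}$ topologically generate the source. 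Note that even this route is not a new proof: both cluster identifications are themselves theorems of \cite{hl16o} and \cite{sqaahernandez}, which is why the paper simply cites the result. Your observation about cone-supported weights and matching completion topologies is correct as far as it goes, but it addresses only the easier, formal side of the problem.
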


    Consequently, we obtain an injective ring morphism
    \begin{equation}\label{aaa}
        K_0(\mathcal{O}^{\mathfrak{b},+}_{\ZZ})\hookrightarrow K_0(\mathcal{O}^{\sh}_{\ZZ}).
    \end{equation}
    We conclude by putting together all the main results of this Section in one theorem, which was not explicitly stated in previous works. 
    \begin{theorem}\label{teo diag comm}
        The following diagram of morphism of algebras is commutative:
        \begin{figure}[H]
        \centering
        \[\begin{tikzcd}
    	\mathcal{A}_e\hat{\otimes}_{\ZZ}\mathcal{E} &&& \mathcal{A}_{w_0}\hat{\otimes}_{\ZZ}\mathcal{E}\\
    	\\
    	K_0(\mathcal{O}^{\mathfrak{b},+}_{\ZZ}) &&&  K_0(\mathcal{O}^{\sh}_{\ZZ})
    	\arrow[hook, from=1-1, to=1-4]
    	\arrow["\simeq"', from=1-1, to=3-1]
    	\arrow["\simeq", from=1-4, to=3-4]
    	\arrow[hook, from=3-1, to=3-4]
    \end{tikzcd}\]
    \label{fig:diagramma clasico}
    \end{figure}
    The top horizontal arrow is due to Lemma \ref{zzz}; the vertical left arrow comes from Theorem \ref{teorema cluster hl16}; the vertical right one comes from Theorem \ref{iso ghl cluster} and Remark \ref{immagine della q var}, and the bottom horizontal one from inclusion \eqref{aaa}.
    \end{theorem}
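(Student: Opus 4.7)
The plan is to reduce the commutativity of the square to a check on a topologically dense generating set, then track explicit formulas along both paths. The four algebras at the corners are topological completions and the four arrows are continuous $[\underline{P}]$-linear ring homomorphisms; accordingly, it suffices to verify commutativity on the initial cluster $\{z_{(i,r)}\}_{(i,r)\in V}$ of $\mathcal{A}_e$, together with the constants $[\lambda]$ (on which all arrows act as the identity on the $[\underline{P}]$-factor), since the $[\underline{P}]$-topological span of these elements fills the top-left corner.

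Then I would chase each $z_{(i,r)}$ around the two paths. Going top then right, Lemma \ref{zzz} gives $z_{(i,r)} \mapsto \underline{Q}_{\omega_i,q^r}$, and Theorem \ref{iso ghl cluster} together with Remark \ref{immagine della q var} sends $\underline{Q}_{\omega_i,q^r}$ to $[-\tfrac{r}{2}\omega_i][L_{i,q^r}^+]$ in $[\underline{P}] \otimes_{[P]} K_0(\mathcal{O}^{sh}_{\ZZ})$. Going left then bottom, Theorem \ref{teorema cluster hl16} sends $z_{(i,r)}$ to $[L_{i,q^r}^{\mathfrak{b},+}]$, and Theorem \ref{cor her} together with the inclusion \eqref{aaa} then sends this to $[L_{i,q^r}^+]$.

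The core step is to match the two resulting images in $[\underline{P}] \otimes_{[P]} K_0(\mathcal{O}^{sh}_{\ZZ})$. This reduces to showing that the weight twist $[-\tfrac{r}{2}\omega_i]$, coming from the normalization $\underline{Q}_{\omega_i,q^r}=[-\tfrac{r}{2}\omega_i]Q_{\omega_i,q^r}$, is exactly what is absorbed by the isomorphism of Theorem \ref{cor her} after passing to the $[\underline{P}]$-tensored version. To make this explicit I would apply the $q$-character morphism to both images and use its injectivity (Propositions \ref{iso q car} and \ref{q car iniettivo in o shift}) to reduce the equality to an identity in $[\underline{P}] \otimes_{[P]} \mathcal{E}_{\ell,\ZZ}$, which can be read off directly from the explicit highest $\ell$-weight of the prefundamental representations. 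Continuity and ring multiplicativity then propagate the equality from initial generators to the whole diagram.

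The step I expect to be the main obstacle is precisely this last normalization matching: one has to carefully interpret the isomorphism $K_0(\mathcal{O}^{\mathfrak{b},+}_{\ZZ}) \simeq K_0(\mathcal{C}^{sh}_{\ZZ}) \hat{\otimes}_{\ZZ}\mathcal{E}$ in its $[\underline{P}]$-tensored version, so that the weight shift between the class $[L_{i,q^r}^{\mathfrak{b},+}]$ (underlying weight zero) and the class $[L_{i,q^r}^+]$ (underlying weight $\omega_i$) is correctly identified with the factor $[-\tfrac{r}{2}\omega_i]$ that distinguishes $\underline{Q}_{\omega_i,q^r}$ from $Q_{\omega_i,q^r}$. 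Once this identification is pinned down, the rest of the argument is a routine extension by density.
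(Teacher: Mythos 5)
Your skeleton is the same as the paper's: reduce to the initial cluster variables $z_{(i,r)}$ by the Laurent phenomenon, then chase them around both sides of the square. The gap is in your treatment of the normalization.

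If you take the top arrow to be exactly the map of Lemma \ref{zzz}, $z_{(i,r)}\mapsto \underline{Q}_{\omega_i,q^r}$, then as you yourself compute, the two paths yield $[-\tfrac{r}{2}\omega_i][L_{i,q^r}^+]$ and $[L_{i,q^r}^+]$ respectively. These are genuinely \emph{different} elements of $[\underline{P}]\otimes_{[P]}K_0(\mathcal{O}^{sh}_{\ZZ})$, and applying $\chi_q$ does not reconcile them: injectivity of $\chi_q$ would reduce the claimed equality to $[-\tfrac{r}{2}\omega_i]\Psi_{i,q^r}=\Psi_{i,q^r}$ in $[\underline{P}]\otimes_{[P]}\mathcal{E}_{\ell,\ZZ}$, which is false. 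Nor is the twist ``absorbed'' by the morphism of Theorem \ref{cor her}, which sends $[L_{i,q^r}^{\mathfrak{b},+}]$ to $[L_{i,q^r}^{+}]$ plainly; and one cannot simply replace it by $[L_{i,q^r}^{\mathfrak{b},+}]\mapsto[-\tfrac{r}{2}\omega_i][L_{i,q^r}^{+}]$ without checking this still extends to a ring morphism (the factor depends on $r$, not only on the shift weight $\omega_i$, so it is not a constant character twist). In short, your ``absorption'' step is precisely where the argument fails.

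The fix the paper uses, left implicit in its phrase ``a normalized version of the one in Lemma \ref{zzz}'', is to take a different top arrow: $z_{(i,r)}\mapsto Q_{\omega_i,q^r}=\bigl[\tfrac{r}{2}\omega_i\bigr]\underline{Q}_{\omega_i,q^r}$, the un-normalized $Q$-variable. With this choice both paths land on $[L_{i,q^r}^+]$, since $\chi_q(L_{i,q^r}^+)=\Psi_{i,q^r}=Q_{\omega_i,q^r}$ (Remark \ref{immagine della q var}). This rescaled top arrow is indeed a ring morphism, as it can be obtained by composing the other three arrows of the square; the diagram is thereby defining it, not testing it against an a priori fixed map. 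What you should add to close the gap: acknowledge that the embedding $\mathcal{A}_e\hookrightarrow\mathcal{A}_{w_0}$ of Remark \ref{remark sullamutazione infinita} is only determined up to the $[\underline{P}]$-rescaling ambiguity once one tensors with $[\underline{P}]$, and that the theorem's top arrow is the rescaled choice $z_{(i,r)}\mapsto Q_{\omega_i,q^r}$ for which both paths coincide. With that correction, the rest of your density and $q$-character argument goes through.
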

    
    \begin{proof}
       We establish the commutativity of the diagram. Thanks to Laurent phenomenon for cluster algebras, it is sufficient to prove the commutativity for the initial cluster variables of the seed $\Sigma_e$ of $\mathcal{A}_e$. Following the top and vertical-right arrows, for all $(i,r)\in V$ we have:
       \[z_{(i,r)}\mapsto \underline{Q}_{\omega_i,q^r}\mapsto \Big[-\frac{r}{2}\omega_i\Big][L^+_{i,q^r}],\] where the first assignment is a normalized version of the one in Lemma \ref{zzz} and the second is explained in Remark \ref{immagine della q var}. Following the vertical left and bottom arrows, we have:
       \[z_{(i,r)}\mapsto \Big[-\frac{r}{2}\omega_i\Big][L_{i,q^r}^{\mathfrak{b},+}]\mapsto \Big[-\frac{r}{2}\omega_i\Big][L_{i,q^r}^+],\] where the first assignment is the definition of the morphism in Theorem \ref{teorema cluster hl16} and the second is the same as \eqref{lll}. 
    \end{proof}

    \begin{remark} The category $\mathcal{O}^{\mathfrak{b},+}_{\ZZ}$ contains finite-dimensional representations of the ordinary quantum affine algebras, for which cluster structures have also been intensively studied. We refer to \cite{hl16o} for the compatibility between the cluster structure of these two categories.
    \end{remark}
    \begin{remark} 
    It is established in \cite{sqaahernandez} that $K_0(\mathcal{C}_{\mathbb{Z}}^{\sh})$ is in fact isomorphic to $\mathcal{A}_e$. Consequently, the construction recalled in Section \ref{qgrb} also provides a quantization of $K_0(\mathcal{C}_{\mathbb{Z}}^{\sh})$.
    \end{remark}

    \section{The quantum Grothendieck ring for \texorpdfstring{$\mathcal{O}^{\sh}_{\ZZ}$}{Osh}}\label{the quantum grothendieck ring}

    \newcommand{\rosso}[1]{\textcolor{red}{#1}}
    \newcommand{\verde}[1]{\textcolor{green}{#1}}
    This section is the core of the paper. We construct a quantum deformation of $K_0(\mathcal{O}^{\sh}_{\ZZ})$. It will be defined as a quantum cluster algebra, with quiver for an initial seed of the form $\Gamma_c$. Thus, the first objective is to define a compatible based quantum torus. This will allow us to define the quantum cluster algebra $\mathcal{A}_{t,w_0}$. Moreover, we want this construction to be coherent with the existing quantum Grothendieck ring $K_t(\mathcal{O}^{\mathfrak{b},+}_{\ZZ})$. In other words, we want to obtain a $t$-deformed version of the diagram in Figure \ref{fig:diagramma clasico}, that is:
    \[\begin{tikzcd}
    	\mathcal{A}_{t,e}\hat{\otimes}_{\ZZ}\mathcal{E} &&& \mathcal{A}_{t,w_0}\hat{\otimes}_{\ZZ}\mathcal{E} \\
    	&&& {\text{}} \\
    	K_t(\mathcal{O}^{\mathfrak{b},+}_{\ZZ}) &&& K_t(\mathcal{O}^{\sh}_{\ZZ})
    	\arrow[hook, from=1-1, to=1-4]
    	\arrow["\simeq"', from=1-1, to=3-1]
    	\arrow["{\text{}}", from=1-4, to=3-4]
    	\arrow[hook, from=3-1, to=3-4]
    \end{tikzcd}\]
    where the left vertical arrow is due to Definition \ref{def quantum gr ring lea}, while the other arrows will appear in our construction.

    \subsection{Mutation of infinite matrices}
    Recall that we have fixed the quiver $\Gamma_c$ as initial quiver for the cluster algebra $\mathcal{A}_{w_0}$ and its vertex set is $V$.
    In order to quantize $\mathcal{A}_{w_0}$ we have to define a compatible quantization matrix, namely a skew-symmetric integer matrix $\Lambda_c$ such that, if $B_c$ denotes the exchange matrix for $\Gamma_c$,
    \[B_c^T\Lambda_c=\mathrm{diag}\left((d_i)_{i\in V}\right)\] where the $(d_i)_{i\in V}$ are non-zero integers, all with the same sign (see Definition \ref{def compatibile}). Then, for $t$ a formal variable, the based quantum torus will be defined as the $\ZZ[t^{\pm\frac{1}{2}}]$-algebra with a distinguished set of generators made of the initial cluster variables $(x_{(i,r)})_{(i,r)\in V}$ and product $\ast$ that verifies the following $t$-commutation relations:
    \[x_{(i,r)}\ast x_{(j,s)}=t^{(\Lambda_c)_{(i,r),(j,s)}}x_{(j,s)}\ast x_{(i,r)},\qquad\text{for all }(i,r),(j,s)\in V.\]  
    Recall that $\mathcal{A}_e$ and $\mathcal{A}_{w_0}$ are different cluster algebras (see Remark \ref{remark sullamutazione infinita}), although they are related by an infinite sequence of green mutations. Thus, we cannot deduce $\Lambda_c$ from $\Lambda_e$ by the usual process of (quantization) matrix mutation. A crucial technical point of our definition will be the use of stabilized $g$-vectors.

    Before giving the definition of $\Lambda_c$, we generalize to the case of infinite quivers the following lemma, which is proved in the context of cluster categories by Palu in \cite[Theorem 12]{palu}. For completeness, we give a proof in terms of cluster algebras.
    
    \begin{lemma}\label{formula g matrix}
     Let $B$ be a $n\times n$ skew-symmetric matrix and $\mathcal{A}$ the cluster algebra with initial seed $(B,\boldsymbol{x})$. Let $B'$ be the matrix obtained after a  finite sequence of  mutations. Moreover, let $G$ be the $G$-matrix  for the cluster variables in seed $(B', \boldsymbol{x'})$ with respect to the reference seed $(B,\boldsymbol{x})$. Then, the following formula holds:
        \begin{equation}
            B=GB'G^T.
        \end{equation}
    \end{lemma}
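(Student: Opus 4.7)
My plan is to argue by induction on the length $\ell$ of the finite sequence of mutations relating $(B,\boldsymbol{x})$ to $(B',\boldsymbol{x}')$. Because the mutation sequence is finite, it touches only finitely many vertices; consequently only finitely many columns of $G$ differ from the corresponding standard basis vectors, and $B'$ and $B$ agree outside a finite block of rows and columns. In particular, every entry of the product $GB'G^T$ is a finite sum, so the infinite-rank setting causes no analytic difficulty and the problem reduces to a finite-rank matrix identity.

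The base case $\ell=0$ is immediate, since then $G=\mathrm{Id}$ and $B'=B$, so $GB'G^T=B$. For the inductive step, assume the identity $B=GB'G^T$ holds after $\ell$ mutations and perform one further mutation at a vertex $k$; write $B''$ and $G''$ for the new exchange matrix and the new $G$-matrix. I plan to invoke the matrix form of mutation due to Nakanishi--Zelevinsky (tropical duality): for a sign $\varepsilon\in\{+1,-1\}$ chosen to be the tropical sign of the relevant $c$-vector at $k$, there is an elementary matrix $E_k^{\varepsilon}$ (with entries explicit in terms of the $k$-th column of $B'$ and of $\varepsilon$) satisfying
\[ B''=(E_k^{\varepsilon})^{T}\,B'\,E_k^{\varepsilon},\qquad G''=G\,(E_k^{\varepsilon})^{-T}. \]
Sign-coherence of $c$-vectors is what makes a single consistent choice of $\varepsilon$ legal here.

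Substituting in the desired identity and using that $E_k^{\varepsilon}$ is invertible, one computes
\[ G''B''(G'')^{T}=G(E_k^{\varepsilon})^{-T}(E_k^{\varepsilon})^{T}B'E_k^{\varepsilon}(E_k^{\varepsilon})^{-1}G^{T}=GB'G^{T}=B, \]
where the final equality is the inductive hypothesis. This closes the induction and yields the lemma.

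The delicate point, and what I expect to be the main obstacle, is the precise bookkeeping of the sign $\varepsilon$ and the verification that the two mutation rules (for $B$ and for $G$) are compatible in the clean matrix form stated above. This requires invoking sign-coherence of $c$-vectors, which is nontrivial but well-established for skew-symmetric cluster algebras such as the ones arising from the quivers $\Gamma_c$ and $\Gamma_e$; alternatively, one can give a categorical proof following Palu's argument for cluster categories, and check that it extends to the present infinite-rank setting using the finiteness observation of the first paragraph.
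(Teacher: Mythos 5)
Your proof is correct, but it takes a genuinely different route from the paper. The paper's proof is non-inductive: it introduces the principal-coefficient extension, lets $C'$ be the bottom $C$-matrix after the mutation sequence, invokes Nakanishi's global tropical duality $G^T=(C')^{-1}$, and then uses the single Fomin--Zelevinsky identity $B'G^T=(C')^TB$ (Eq.~6.14 of \cite{fz4}); left-multiplying by $G=((C')^T)^{-1}$ gives $GB'G^T=B$ in one line. Your proof instead proceeds by induction on the length of the mutation sequence, using the elementary-matrix (one-step) form of the Nakanishi--Zelevinsky mutation rules for $B$ and for $G$, with the sign $\varepsilon$ fixed to be the tropical sign of the relevant $c$-vector so that the auxiliary $c$-vector term in the $g$-vector recursion vanishes. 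Both approaches ultimately lean on tropical duality (hence on sign coherence of $c$-vectors), but the paper packages it as two ready-made identities while you unfold it step by step; your version is more self-contained and closer to Palu's categorical argument, at the cost of the sign bookkeeping you flag as the delicate point (which does work out: the elementary matrix $P=J_k+[\varepsilon B']_+^{k\bullet}$ is an involution, one has $\mu_k(B')=P^TB'P$ for either sign, and with $\varepsilon$ the tropical sign one also has $G''=GP^{-T}$). One small remark: your opening paragraph about the infinite-rank setting is not needed here --- the lemma as stated is for finite $n\times n$ matrices --- though the finiteness observation you make is exactly the content of the paper's subsequent Proposition~\ref{formula g-matrix infinita}, so you have anticipated correctly how the extension to $\mathcal{A}_{w_0}$ goes.
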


    \begin{proof}
    Recall that $g$-vectors are defined for principal coefficients cluster algebras (see Section 10 in \cite{fz4}). Indeed, we can denote $\tilde{B}$ the principal extension of $B$, that is the $2n\times n$ matrix with top part given by $B$ and bottom part given by the identity matrix of size $n$. Applying the same sequence of mutations in the hypothesis to $\tilde{B}$, we obtain $\tilde{B}'$. The bottom $n\times n$ submatrix of $\tilde{B}'$ is denoted $C'$. 
    By \cite[Theorem 4.1]{nakanishi} we have $G^T=(C')^{-1}$.
    On the other hand, \cite[Equation 6.14]{fz4} states that 
    \begin{equation}
        B'G^T=(C')^TB.
    \end{equation}
    So, multiplying both sides on the left by $\left((C')^{T}\right)^{-1}=G$ we obtain the result.
    \end{proof}

    Now we generalize Lemma \ref{formula g matrix} to the case of an infinite rank cluster algebra. To simplify notations, we consider specifically our cluster algebra $\mathcal{A}_{w_0}$. 
    The key to pass from finite to infinite matrices is that every vertex in $\Gamma_c$ has only finitely many incident arrows (let us call these types of quivers ``locally finite''), thus the mutation at each vertex involves only a finite portion of the quiver, or equivalently of the exchange matrix. Thus, the proof can be adapted to all infinite quivers that are locally finite.

    \begin{proposition}\label{formula g-matrix infinita}
      Let $B_c$ be the exchange matrix of the quiver $\Gamma_c$ and $\Sigma_c$ the associated seed. Let $\Gamma_c'$, $B_c'$ and $\Sigma'_c$ be respectively the quiver, the exchange matrix and the seed obtained after a finite number of mutations. Let $G$ be the $G$-matrix for the cluster variables in the seed $\Sigma_c$ with respect to the reference seed $\Sigma_c'$. Then, the following formula holds:
        \begin{equation}
            B'_c=GB_cG^T.
        \end{equation}
    \end{proposition}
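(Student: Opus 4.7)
The plan is to reduce to the finite-rank Lemma \ref{formula g matrix} by isolating a finite sub-quiver that absorbs the entire mutation process. Denote the prescribed finite mutation sequence by $\mu_{v_\ell} \circ \cdots \circ \mu_{v_1}$, set $S = \{v_1, \ldots, v_\ell\}$, and let $B_c^{(t)}$ be the intermediate exchange matrix after the first $t$ mutations, with $B_c^{(0)} = B_c$ and $B_c^{(\ell)} = B_c'$. I would define the finite set
\[
F := S \cup \bigcup_{t = 0}^{\ell - 1} \{u \in V : (B_c^{(t)})_{v_{t+1}, u} \neq 0\} \subset V.
\]
Since each $B_c^{(t)}$ has finitely many non-zero entries in any given row (local finiteness of $\Gamma_c$ is preserved by mutation) and the sequence is finite, $F$ is finite by construction.

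The next step is to establish two structural facts. First, the mutation rule for $B$-entries is local, so the updates triggered by $\mu_{v_{t+1}}$ touch only pairs $(i,j)$ with $i = v_{t+1}$, $j = v_{t+1}$, or both $i, j$ belonging to the neighbourhood of $v_{t+1}$ in $B_c^{(t)}$; by the definition of $F$ all such indices lie in $F$, so $B_c$ and $B_c'$ coincide outside the $F \times F$ block. Second, recalling the mutation rule for $g$-vectors in principal coefficients and the fact that the $C$-matrix remains the identity outside $F \times F$ throughout the process, I would verify that the column of $G$ indexed by any $v \notin S$ is the standard basis vector $\boldsymbol{e}_v$ (the cluster variable $x_v$ is never mutated), while the columns indexed by $v \in S$ are supported in $F$. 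Consequently $G$ has the block-diagonal form
\[
G = \begin{pmatrix} G_F & 0 \\ 0 & \mathrm{Id}_{V \setminus F} \end{pmatrix}.
\]

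With this decomposition in place, I would apply Lemma \ref{formula g matrix} to the finite cluster algebra defined by the restricted skew-symmetric matrix $(B_c)_{F \times F}$: the same mutation sequence makes sense there and yields precisely $(B_c')_{F \times F}$, so the finite-rank statement reads
\[
(B_c')_{F \times F} = G_F \, (B_c)_{F \times F} \, G_F^T.
\]
To conclude, I would expand $G B_c G^T$ block by block and match it against $B_c'$. The $F \times F$ block is handled by the displayed identity; the $(V \setminus F) \times (V \setminus F)$ block matches because this block of $B_c$ is unchanged; and for the off-diagonal blocks the key observation is that any arrow $w \to u$ with $w \in F$ and $u \in V \setminus F$ forces $w \in F \setminus S$ (otherwise $u$ would be a neighbour of a mutated vertex and hence belong to $F$), so the $w$-th column of $G_F$ equals $\boldsymbol{e}_w$ and the matching reduces to the trivial identity $(B_c)_{w,u} = (B_c')_{w,u}$.

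The principal obstacle is conceptual rather than computational: one has to verify that $F$ is large enough to absorb every neighbour of every mutated vertex \emph{throughout} the mutation sequence, not merely in the initial quiver $\Gamma_c$, because mutations can create new arrows. Together with the local finiteness of $\Gamma_c$, this is precisely what guarantees that $F$ can be kept finite while all the block-decomposition arguments above remain valid.
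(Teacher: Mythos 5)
Your proof is correct and follows the same basic strategy as the paper: isolate a finite sub-quiver absorbing the mutation process, observe the $G$- and $C$-matrices are block-diagonal with an identity block outside that finite region, and invoke the finite-rank Lemma \ref{formula g matrix} on the finite block. The paper first treats a single mutation (with $W_{(i,r)}$ the closed neighbourhood of the mutated vertex) and then passes to a finite sequence by replacing $W_{(i,r)}$ with a union $W_{(i_1,r_1)}\cup\cdots\cup W_{(i_n,r_n)}$ of neighbourhoods taken in the initial quiver $\Gamma_c$. As you correctly flag at the end of your writeup, that union as literally written is not large enough in general, since mutations can create new arrows: the neighbourhood of $(i_k,r_k)$ in the quiver at step $k-1$ may strictly contain its neighbourhood in $\Gamma_c$. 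Your definition of $F$, which collects the neighbourhoods of each $v_{t+1}$ in the \emph{intermediate} exchange matrix $B_c^{(t)}$, closes this gap cleanly, and your off-diagonal-block argument (that an arrow from $F$ to $V\setminus F$ must start in $F\setminus S$, hence hits an identity column of $G_F$) is a nice way to make the block cancellation explicit. So the approach is the same, but your version is a sharper and more careful account of the reduction-to-finite-rank step than the one in the paper.
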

    \begin{proof}  
    We fix a vertex of $\Gamma_c$, say $(i,r)\in V$ and we consider $\mu_{(i,r)}$, the quiver mutation at this vertex. The resulting quiver is $\Gamma_c'=\mu_{(i,r)}(\Gamma_c)$, with exchange matrix $B_c'=\mu_{(i,r)}(B_c)$. In $\Gamma_c$ the vertex $(i,r)$ is connected with a finite subset of vertices in $V$. Let us call $W_{(i,r)}$ such subset.\\
    Hence, recalling the mutation rule (see \cite[Definition 4.2]{fz1}), we see that the only entries of $B_c$ that possibly change after mutation $\mu_{(i,r)}$ are those contained in the submatrix $\hat{B}_c=(\hat{b}_{h,k})_{h,k\in W_{(i,r)}}$. In other words, the effects of the mutation can be seen by looking only at the finite full subquiver of $\Gamma_c$ with vertex set $W_{(i,r)}$ that has $\hat{B}_c$ as exchange matrix. We denote $\hat{B}_c'$ the the matrix obtained from $\hat{B}_c$ by mutation at $(i,r)$. Thus, the mutated matrix $B_c'$ is obtained from $B_c$ substituting $\hat{B}_c$ with $\hat{B}_c'$.  Here is the situation graphically:
        \[
    B_c=\begin{pmatrix}
      
        \ast & \ast 
      & 0 \\
      \ast & 
        \hat{B}_c
      & \ast \\
      0 & 
        \ast 
       & 
        \ast
     
    \end{pmatrix},\qquad B_c'=\begin{pmatrix}
      
        \ast & \ast & 0 \\
      \ast & \hat{B}_c' & \ast \\
      0 & \ast & \ast
      
    \end{pmatrix}
    \] where the asterisks denote parts of the matrix that are unchanged, while the zeros are due to the locally finiteness of $\Gamma_c$. Now, we know how $B_c$ and $\hat{B}_c$ are related by mutation, thanks to Lemma~\ref{formula g matrix}. Let $\hat{G}$ be the $G$-matrix for the cluster variables in the seed associated with $\hat{B}_c$ with respect to the reference seed associated with $\hat{B}_c'$. So, Lemma \ref{formula g matrix} tells us $\hat{B}_c'=\hat{G}\hat{B}_c\hat{G}^T.$
    Let us consider the principal coefficients extension of $B_c$ and let $C$ be the corresponding coefficient matrix. Hence, $C=\Id_{V\times V}$. The mutation $\mu_{(i,r)}$ acts on $C$ by modifying only its $W_{(i,r)}\times W_{(i,r)}$ submatrix. 
    If we denote by $\hat{C}'$ the mutated coefficient matrix for the finite full subquiver with vertex set $W_{(i,r)}$, then the entire mutated coefficient matrix $C'$ is a three blocks diagonal matrix of the form
    \[C'=\begin{pmatrix}
      I & 0 & 0 \\
      0 & \hat{C}' & 0 \\
      0 & 0 & I
    \end{pmatrix}.\]
    Thus, the $G$-matrix $G$ for the cluster variables in $\Sigma_c$ with respect to the reference seed $\Sigma_c'$ is a three-blocks diagonal matrix, where the central block is $\hat{G}=((\hat{C}')^T)^{-1}$ and the other two are identity blocks. 
    \[G=\begin{pmatrix}
      I & 0 & 0 \\
      0 & \hat{G} & 0 \\
      0 & 0 & I
    \end{pmatrix}
    \]
    So we can conclude that
    \[B'_c= \begin{pmatrix}
     
        \ast & \ast & 0 \\
      \ast & \hat{G}\hat{B}_c\hat{G}^T & \ast \\
      0 & \ast &  \ast\\
      
    \end{pmatrix}=\begin{pmatrix}
      I & 0 & 0 \\
      0 & \hat{G} & 0 \\
      0 & 0 & I
    \end{pmatrix}\begin{pmatrix}
      
          \ast & \ast & 0 \\
      \ast & \hat{B}_c & \ast \\
      0 & \ast & \ast
     
    \end{pmatrix}\begin{pmatrix}
      I & 0 & 0 \\
      0 & \hat{G}^T & 0 \\
      0 & 0 & I
    \end{pmatrix}=GB_cG^T.\]
    \newline In order to pass to a finite sequence of mutations $\mu:=\mu_{(i_1,r_1)}\cdots \mu_{(i_n,r_n)}$, for all $(i_1,r_1),\dots,(i_n,r_n)\in V$, we can proceed in similar way. Let $W$ be the union of subsets $W_{(i_1,r_1)}\cup\cdots \cup W_{(i_n,r_n)}$. The effect of $\mu$ can be seen on a finite submatrix of size $|W|\times |W|$ extracted from the matrix $B_c$. Then applying again Lemma \ref{formula g matrix}, we get the result.
    \end{proof}

    In the next Theorem we extend the result of Proposition \ref{formula g-matrix infinita} to the case where the reference seed is the limit reference seed $\Sigma_e$, associated to the basic infinite quiver $\Gamma_e$. In this case, the $G$-matrix is well defined and it is the limit $G$-matrix $G^{(\infty)}$ of Theorem \ref{teo stabilized g vectors}. One difficulty is that this time we consider a reference seed obtained after \emph{infinitely many} mutations. 
    \begin{theorem}\label{teo convergenza}
    Let $B_c$ be the exchange matrix for $\Gamma_c$ and $B_e$ the exchange matrix for $\Gamma_e$. Then we have
    \begin{equation}
        B_e=G^{(\infty)}B_c\left(G^{(\infty)}\right)^T.
    \end{equation}
    \end{theorem}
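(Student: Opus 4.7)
My plan is to promote Proposition \ref{formula g-matrix infinita} to the limit $k \to \infty$, exploiting the fact that although $\Gamma_e$ is reached from $\Gamma_c$ only through infinitely many mutations, every individual matrix entry stabilizes after finitely many of them.

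First I would apply Proposition \ref{formula g-matrix infinita} to the finite composition of mutations $\mu_{grn}^{(k-1)} \circ \cdots \circ \mu_{grn}^{(0)}$ carrying $\Sigma_c$ to $\Sigma_c^{(k)}$. Each single step $\mu_{grn}^{(j)}$ is in fact a finite product of mutations: indeed, by the description recalled just before Proposition \ref{g vector braid action}, the green vertices of $\Gamma_c^{(j)}$ are in bijection with the letters of a reduced expression of $w_0$, so they number $N < \infty$. Writing $B_c^{(k)}$ for the exchange matrix of $\Gamma_c^{(k)}$, Proposition \ref{formula g-matrix infinita} yields
\[
B_c^{(k)} = G^{(k)} B_c (G^{(k)})^T \qquad \text{for every } k \geq 0.
\]

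Next I would pass to the limit entry by entry. For the left-hand side, the iterative construction $\Gamma_c^{(k+1)} = \mu_{grn}^{(k)}(\Gamma_c^{(k)})$ shifts the irregular red/green subquiver one row downward at each step, while its complement agrees, as a labeled subquiver, with the corresponding portion of $\Gamma_e$. Consequently for any fixed pair $(v,w) \in V \times V$, once $k$ is large enough that the irregular part has dropped below the levels of both $v$ and $w$, the arrows of $\Gamma_c^{(k)}$ touching $v$ or $w$ coincide with those of $\Gamma_e$, so $(B_c^{(k)})_{v,w} = (B_e)_{v,w}$. For the right-hand side, Theorem \ref{teo stabilized g vectors} asserts entry-wise stabilization $G^{(k)} \to G^{(\infty)}$; moreover it shows that the $v'$-th column of $G^{(k)}$ is supported in the finite set $I(m)$ containing $v'$, so the matrix is block-diagonal in the blocks $I(m)$. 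Combined with the local finiteness of $\Gamma_c$ (each row and column of $B_c$ has finite support), the $(v,w)$-entry of $G^{(k)} B_c (G^{(k)})^T$ is a finite sum whose individual summands stabilize. It therefore converges to the corresponding entry of $G^{(\infty)} B_c (G^{(\infty)})^T$. Equating the two limits gives the desired identity.

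The main point requiring care is the entry-wise stabilization on the left-hand side: one must check that translating the irregular subquiver downward not only changes labels below a given level but actually leaves the local combinatorics around any fixed vertex pair identical to that of $\Gamma_e$. This reduces to an inspection of Definition \ref{procedura ghl} and of the one-step translation described in Remark \ref{remark sullamutazione infinita}, which together ensure that above the irregular zone $\Gamma_c^{(k)}$ literally coincides with $\Gamma_e$ on the shared vertex labels. With this in hand, the convergence argument reduces to a routine bookkeeping of finite sums.
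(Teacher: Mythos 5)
Your proof is correct and follows essentially the same strategy as the paper: apply Proposition \ref{formula g-matrix infinita} to the finite mutation sequence from $\Sigma_c$ to $\Sigma_c^{(k)}$ to get $B_c^{(k)} = G^{(k)} B_c (G^{(k)})^T$, then let $k \to \infty$ using the stabilization from Theorem \ref{teo stabilized g vectors} and Remark \ref{remark sullamutazione infinita}. Your write-up is in fact more careful than the paper's, which simply asserts that the limit of the product is the product of the limits, whereas you spell out the entry-wise stabilization of $B_c^{(k)}$ (the irregular zone drops below any fixed vertex pair) and justify exchanging limit and product via the block-diagonal structure of $G^{(k)}$ together with the local finiteness of $B_c$.
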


    \begin{proof}
    With the notation from Section \ref{ghl construction}, let us call $B^{(m)}$ the exchange matrix for the quiver $\Gamma_c^{(m)}$ and $G^{(m)}$ the $G$-matrix for the variables in the initial seed $\Sigma_c$ with respect to $\Sigma_c^{(m)}$, i.e. the $(i,k)$-th column of $G^{(m)}$ is the $g$-vector $g^{(m)}_{(i,k)}$. For all $m\in \NN$, we can apply Proposition \ref{formula g-matrix infinita} to say that 
    \begin{equation}\label{prodotto m-mo}
        B^{(m)}=G^{(m)}B_c(G^{(m)})^T.
    \end{equation} Now, in the limit $m\to \infty$, that is the "limit" for infinite sequence of green mutations explained in Remark \ref{remark sullamutazione infinita}, $G^{(m)}\to G^{(\infty)}$ by Theorem \ref{teo stabilized g vectors}, while $B^{(m)}\to B_e$ since $\Gamma_c^{(m)}\to \Gamma_e$ (by the same Remark \ref{remark sullamutazione infinita}).
    Thus, we can consider the limit $m\to \infty$ of the product \eqref{prodotto m-mo} (which is just the product of the limits since they are all convergent) and obtain
    \[B_e=G^{(\infty)}B_c\left(G^{(\infty)}\right)^T.\]
    \end{proof}
    We stress the fact that all the infinite matrices considered so far ($B_e$, $B_c$, $G^{(\infty)}$) have finitely many non-zero elements for each of their columns and rows, hence the matrix products are well defined. 
    \begin{corollary}\label{G invertibile}
        The stabilized $G$-matrix $G^{(\infty)}$ is invertible. As a consequence, we have
        \begin{equation}\label{formula g matrici infinite}
            B_c=\left(G^{(\infty)}\right)^{-1}B_e\left(G^{(\infty)}\right)^{-T},
        \end{equation}where for a matrix $A$, we denote $A^{-T}:=(A^T)^{-1}$. 
    \end{corollary}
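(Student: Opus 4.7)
The plan is to read off invertibility of $G^{(\infty)}$ directly from its block structure provided by Theorem \ref{teo stabilized g vectors}, and then multiply the identity of Theorem \ref{teo convergenza} on both sides by the appropriate inverses.

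By Theorem \ref{teo stabilized g vectors}, $G^{(\infty)}$ is block diagonal with blocks $G^{(\infty)}(m)$, $m\in\ZZ$, indexed by the finite subsets $I(m)\subset V$. For $m>0$ the block is $\mathrm{Id}_n$; for $h_c\le m\le -1$ it is the finite product $\boldsymbol{T}_{-1}\boldsymbol{T}_{-2}\cdots\boldsymbol{T}_m$; and for $m\le h_c$ it is $\boldsymbol{T}_{-1}\boldsymbol{T}_{-2}\cdots\boldsymbol{T}_{h_c}$. Since each $\boldsymbol{T}_k$ is a product of reflection matrices (see \cite[Section 4.3.2]{ghl}), each is invertible, hence every finite block $G^{(\infty)}(m)$ is invertible. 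Therefore the infinite block diagonal matrix $G^{(\infty)}$ admits a two-sided inverse $(G^{(\infty)})^{-1}$, obtained by inverting each block. Note that $(G^{(\infty)})^{-1}$ is also locally finite (even block diagonal with finite blocks), so all matrix products involving it are unambiguously defined.

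Starting from Theorem \ref{teo convergenza},
\[
B_e=G^{(\infty)}B_c\bigl(G^{(\infty)}\bigr)^T,
\]
we multiply on the left by $(G^{(\infty)})^{-1}$ and on the right by $(G^{(\infty)})^{-T}$; the associativity of these products is guaranteed by the local finiteness of $B_c$, $B_e$, $G^{(\infty)}$ and $(G^{(\infty)})^{-1}$. This yields
\[
B_c=\bigl(G^{(\infty)}\bigr)^{-1}B_e\bigl(G^{(\infty)}\bigr)^{-T},
\]
which is formula \eqref{formula g matrici infinite}.

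There is no serious obstacle here: the only point worth stressing is that the local finiteness assumption on all matrices involved (each row and column has finitely many nonzero entries) makes the infinite matrix algebra behave exactly like the finite-dimensional one for these specific multiplications, so the block-wise inversion genuinely provides a two-sided inverse.
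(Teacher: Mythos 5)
Your proof is correct and follows the same route as the paper: block-diagonal invertibility from the factorization of each block into invertible $\boldsymbol{T}_k$'s, then multiplying the identity of Theorem \ref{teo convergenza} on both sides by the inverses. Your additional remarks about local finiteness and associativity make explicit what the paper leaves implicit, but the underlying argument is identical.
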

    \begin{proof}
        The matrix $G^{(\infty)}$ is of block diagonal form, so it is enough to prove that each block is invertible. By Theorem \ref{teo stabilized g vectors}, each block is given by a product of matrices $\mathbf{T}_i$ and each of these matrices is invertible, by construction. Hence, Equation \eqref{formula g matrici infinite} can be obtained from Theorem \ref{teo convergenza}.
    \end{proof}
    
    \begin{example}\label{esempio A1 1}
        We consider the case $\g$ of type $A_1$. Following the convention from the previous section, the initial quiver $\Gamma_c$ is as in Figure \ref{fig:due quiver per A1}.
       
     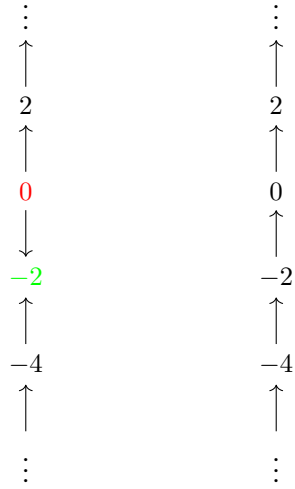
\begin{figure}[h]
         \centering
    
    \[\begin{tikzcd}
    	\vdots &&& \vdots \\
    	2 &&& 2 \\
    	{\color{red}{0}} &&& 0 \\
    	\color{green}{-2} &&& {-2} \\
    	{-4} &&& {-4} \\
    	\vdots &&& \vdots
    	\arrow[from=2-1, to=1-1]
    	\arrow[from=2-4, to=1-4]
    	\arrow[from=3-1, to=2-1]
    	\arrow[from=3-1, to=4-1]
    	\arrow[from=3-4, to=2-4]
    	\arrow[from=4-4, to=3-4]
    	\arrow[from=5-1, to=4-1]
    	\arrow[from=5-4, to=4-4]
    	\arrow[from=6-1, to=5-1]
    	\arrow[from=6-4, to=5-4]
    \end{tikzcd}\]
         \caption{$\Gamma_{c}$ on the left and $\Gamma_e$ on the right.}
         \label{fig:due quiver per A1}
     \end{figure}

        Thus, the exchange matrices $B_e$ and $B_c$ are the following:
        \begin{equation}
            B_e=\begin{pmatrix}
            \ddots & \ddots & \ddots \\
                   &-1 & 0 & 1  \\
                          &&-1 & 0 & 1 \\
                               &&&-1 & 0 & 1\\
                               &&&&-1 & 0 & 1  \\
                              
                                &&&&&\ddots & \ddots & \ddots
        \end{pmatrix}\quad B_c= 
        \begin{pmatrix}
            \ddots & \ddots & \ddots \\
                   
                          &-1 & 0 & 1 \\
                               &&\color{green}-1 & \textcolor{green}{0} & \color{green}-1\\
                               &&& \color{red}1 & \textcolor{red}{0} & \color{red}1\\
                               &&&& -1 & 0 & 1 \\
                              
                                     &&&&&\ddots & \ddots & \ddots
                              
        \end{pmatrix}
        \end{equation} Namely, $B_e$ is skew symmetric, with the super-diagonal made of $1$'s, the sub-diagonal made of $-1$'s and all other entries equal to $0$; $B_c$ is skew-symmetric and it is equal to $B_e$ except for the $-2$-th row (in green) and the $0$-th row (in red). On the other hand, for each $k\in 2\ZZ$, the stabilized $g$-vector are:
        \begin{alignat*}{2}
            g^{(\infty)}_k&=\mathbf{e}_k&\quad\text{if }k\geq 0;\\
            g^{(\infty)}_k&=-\mathbf{e}_k&\quad\text{if }k<0.
        \end{alignat*}
        Hence the $G$-matrix looks like
        \begin{equation}\label{g matrix sl2}
            G^{(\infty)}=\begin{pmatrix}
                \ddots & \\
                       & -1 & \\
                            && -1 & \\
                            &&& -1\\
                                  &&&& \color{red} 1 & \\
                                                 &&&&& 1 & \\
                                                 &&&&&& 1 & \\
                                                      &&&&&&& \ddots
                    
            \end{pmatrix}
        \end{equation}
        where we marked in red the entry $G^{(\infty)}_{0,0}$. In this case we have that $G^{(\infty)}=\left(G^{(\infty)}\right)^{-1}=\left(G^{(\infty)}\right)^T$, so it is a easy computation to verify that formula \eqref{formula g matrici infinite} holds.
       
    \end{example}
    \subsection{Compatible pair}
    Now we are ready to define:
    \begin{equation}\label{definizione lambda c}
        \Lambda_c:= \left(G^{(\infty)}\right)^T \Lambda_e G^{(\infty)}.
    \end{equation}
    Note that $\Lambda_e$ has infinite non-zero entries in each row and column, but thanks to the form of $G^{\infty}$ the product above is well defined.
    \begin{proposition}\label{quantum cluster ade}
        The pair $\left(\Lambda_c,B_c\right)$ is compatible.
    \end{proposition}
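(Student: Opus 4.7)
The plan is to compute $B_c^T \Lambda_c$ directly using the formula for $\Lambda_c$ together with the two key ingredients already established: the matrix identity of Theorem \ref{teo convergenza}/Corollary \ref{G invertibile} relating $B_c$ and $B_e$ via $G^{(\infty)}$, and the compatibility identity $B_e^T \Lambda_e = -2\,\mathrm{Id}$ from Proposition \ref{compatibilita lea}. Along the way I also need to check that $\Lambda_c$ is skew-symmetric, which is immediate from $\Lambda_c = (G^{(\infty)})^T \Lambda_e G^{(\infty)}$ and skew-symmetry of $\Lambda_e$: transposing gives $\Lambda_c^T = (G^{(\infty)})^T \Lambda_e^T G^{(\infty)} = -\Lambda_c$.

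For the main computation, write $G = G^{(\infty)}$ to lighten notation. By Corollary \ref{G invertibile}, $B_c = G^{-1} B_e G^{-T}$. Taking the transpose and using that $(G^{-T})^T = G^{-1}$ and $(G^{-1})^T = G^{-T}$, I get $B_c^T = G^{-1} B_e^T G^{-T}$. Then
\[
B_c^T \Lambda_c \;=\; G^{-1} B_e^T G^{-T} \cdot G^T \Lambda_e G \;=\; G^{-1}\, (B_e^T \Lambda_e)\, G,
\]
and Proposition \ref{compatibilita lea} converts the middle factor into $-2\,\mathrm{Id}$, so $B_c^T \Lambda_c = -2\,\mathrm{Id}$. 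Since all cluster variables of $\mathcal{A}_{w_0}$ associated with vertices of $V$ are exchangeable, this is exactly the compatibility condition of Definition \ref{def compatibile}, with common sign $d_j = -2$ for every $j \in V$.

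The only genuine subtlety is ensuring that all products of infinite matrices involved are well-defined, i.e.\ that each entry is a finite sum. This is where the block-diagonal structure of $G^{(\infty)}$ from Theorem \ref{teo stabilized g vectors} is crucial: $G$ and $G^{-1}$ have only finitely many non-zero entries in each row and column, and the same holds for $B_c$ and $B_e$ since the quivers $\Gamma_c$ and $\Gamma_e$ are locally finite. I would briefly flag this point to justify the associativity rearrangement in the displayed computation (the cancellation $G^{-T} \cdot G^T = \mathrm{Id}$ and $G^{-1} \cdot G = \mathrm{Id}$ in particular). Once this bookkeeping is in place, the identification $B_c^T \Lambda_c = -2\,\mathrm{Id}$ is the whole proof, and in fact yields the quantitative strengthening that the compatibility pair has constant ``scale'' $-2$, exactly matching the one from \cite{b21} for $(\Lambda_e, B_e)$.
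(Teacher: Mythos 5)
Your proof is correct and follows essentially the same route as the paper: conjugating $B_e^T\Lambda_e = -2\,\mathrm{Id}$ by $G^{(\infty)}$ using Corollary \ref{G invertibile}. Your extra remarks on skew-symmetry of $\Lambda_c$ and on the well-definedness of the infinite matrix products are sound and make explicit some bookkeeping the paper leaves implicit, but they do not change the argument.
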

    \begin{proof}
        Thanks to our definition of $\Lambda_c$, to Equation \eqref{formula g matrix} and to the compatibility of $\left(\Lambda_e,B_e\right)$ (see Proposition \ref{compatibilita lea}) the proof of compatibility is straightforward:
        \begin{align*}
            \left(B_c\right)^T\Lambda_c &= \left(\left(G^{(\infty)}\right)^{-1}B_e(G^{(\infty)})^{-T}\right)^{T}\left(G^{(\infty)})^T\Lambda_e(G^{\infty})\right)\\
            &=\left(G^{(\infty)}\right)^{-1}B_e^T\Lambda_e G^{(\infty)}\\
            &=\left(G^{(\infty)}\right)^{-1}(-2\Id_{V\times V})\ G^{(\infty)}\\
            &=-2\Id_{V\times V}.
        \end{align*} 
    \end{proof}

    \subsection{Quantum tori}
    We use freely the Notation \ref{notation v}.
    
    \begin{definition}
        The quantum torus $T_{t,c}$ is the $\ZZ[t^{\pm \frac{1}{2}}]$-algebra generated by the $Q_v^{\pm 1}$, for $v\in V$, with non-commutative product $\ast_c$ and $t$-commutation relations
        \[Q_v\ast_c Q_{v'}=t^{(\Lambda_c)_{v,v'}}Q_{v'}\ast_c Q_v,\quad \forall v,v'\in V.\]
    \end{definition}
    
    \begin{remark} \label{commutative monomial}
        When dealing with quantum tori, like $(T_{t,c},\ast_c)$ or $(T_{t,e},\ast_e)$ (see Definition \ref{quasi toro lea}), it is useful to consider the so-called \emph{commutative} monomials. Note that for every $v,w$ in $V$, the expression
        \[t^{\frac{1}{2}(\Lambda_c)_{w,v}}Q_v\ast_c Q_w=t^{\frac{1}{2}(\Lambda_c)_{v,w}}Q_w\ast_c Q_v\] is invariant under permutation of $v$ and $w$. Thus, we denote 
        \[Q_vQ_w:=t^{\frac{1}{2}(\Lambda_c)_{w,v}}Q_v\ast_cQ_w\] and we call it a \emph{commutative monomial}. More in general, the non commutative product of elements in the quantum torus depends heavily on the order, so to get rid of this technicality, it is useful to consider commutative monomials of several variables. Let us denote by $\overset{\rightarrow}{\ast}$ the ordered product of elements in $T_{t,c}$. Then, for a family of integers $(n_v)_{v\in V}$ with only finitely many non-zero components, we define
        \[\prod_{v\in V'}Q_v^{n_v}:=t^{\frac{1}{2}\sum_{v<u}n_vn_u(\Lambda_c)_{u,v}}\ \overset{\rightarrow}{\ast}_cQ_v^{n_v}.\] Then, we can consider either the commutative or non-commutative product of commutative monomials. Similarly one can define commutative monomials for $T_{t,e}$, see \cite[Equation (5.11)]{b21}. 
    \end{remark}

    \begin{definition}
        We extend the quantum torus $T_{t,c}$ to 
        \[\mathcal{T}_{t,c}:=[\underline{P}]\otimes_{[P]}(\mathcal{E}\hat{\otimes}_{\ZZ}T_{t,c}).\]  Note that as $\mathcal{E}$-algebra, $\mathcal{T}_{t,c}$ has another set of generators made of the initial $\underline{Q}$-variables.
    \end{definition}
    
    \begin{definition}\label{def quantum cluster nostra}
        Let $\mathcal{A}_{t,w_0}$ be the quantum cluster algebra associated with the mutation class of $(\Lambda_c,B_c)$. Then, by the quantum Laurent phenomenon, $\mathcal{A}_{t,w_0}\subset \mathcal{T}_{t,c}$.
    \end{definition}

    \begin{example}\label{esempio A1 2}
        We continue Example \ref{esempio A1 1} by computing the quantization matrix $\Lambda_c$. To do so, first we need $\Lambda_e$ and it can be computed using the function $f$ defined in Example \ref{f}. One can verify that the result is 
         \[\Lambda_e=\begin{pmatrix}
             & \vdots & \vdots & \vdots & \vdots & \vdots &  \\
    \dots & \color{blue}0 & -1 & 0 & -1 & 0 & \dots \\
    \dots & 1 & \color{blue}0 & -1 & 0 & -1 & \dots\\
    \dots & 0 & 1 & \color{blue}0 & -1 & 0 & \dots \\
    \dots & 1 & 0 & 1 & \color{blue}0 & -1 & \dots \\
    \dots & 0 & 1 & 0 & 1 & \color{blue}0 & \dots \\
     & \vdots & \vdots & \vdots & \vdots & \vdots & 
        \end{pmatrix}\]where we paint in blue the main diagonal, in order to highlight the structure of the matrix: the entries in the upper triangular part are alternating $0$'s and $-1$'s (thus in the lower triangular part there are alternating $0$'s and $1$'s). Recalling $G^{(\infty)}$ from \eqref{g matrix sl2} and using the definition in \eqref{definizione lambda c}, we obtain
    
        \begin{equation}\label{quantization matrix sl2} \Lambda_{c}=
      \left(
       \begin{array}{ccccc|cccccc}
       
      &  & & & & \overset{0}{\downarrow} & & & & & \\
       
    0 & -1 & 0 & -1 & 0 & 1 & 0 & 1 & 0 & 1 & 0 \\
    1 & 0 & -1 & 0 & -1 & 0 & 1 & 0 & 1 & 0 & 1 \\
    0 & 1 & 0 & -1 & 0 &  1 & 0 & 1 & 0 & 1 & 0 \\
    1 & 0 & 1 & 0 & -1 & 0 & 1 & 0  & 1 & 0 & 1\\
    0 & 1 & 0 & 1 & 0 & 1 & 0 & 1 & 0 & 1 & 0 \\ \hline
    -1 & 0 & -1 & 0 & -1 & 0 & -1 & 0 & -1 & 0 & -1\\
    0 & -1 & 0 & -1 & 0 & 1 & 0 & -1 & 0 & -1 & 0 \\
    -1 & 0 & -1 & 0 & -1 & 0 & 1 & 0 & -1 & 0 & -1 \\
    0 & -1 & 0 & -1 & 0 & 1 & 0 & 1 & 0 & -1 & 0 \\
    -1 & 0 & -1 & 0 & -1 & 0 & 1 & 0 & 1 & 0 & -1 \\
    0 & -1 & 0 & -1 & 0 & 1 & 0 & 1 & 0 & 1  & 0
     
       \end{array}
      \right)
    \end{equation}
    We have highlighted the $0$-th column. Notice that the matrix can be divided in four blocks, where the top-left and bottom-right ones have the same form as the matrix $\Lambda_e$, while the top-right and the bottom-left ones have opposite signs, compared to the corresponding blocks in $\Lambda_e$. 
    \end{example}

    The definition in \eqref{definizione lambda c} is not explicit enough if one wants to compute $t$-commutations between initial cluster variables in $\mathcal{A}_{t,w_0}$. To solve this problem, in the next theorem we make a key observation about the quantum tori $T_{t,c}$ and $T_{t,e}$. First, we need a lemma about $Q$-variables and stabilized $g$-vectors. 
    
    Recall the truncation map (Definition \ref{def truncation}), Remark \ref{confronto tra operatori chari} and the group isomorphism $\eta:\mathcal{M}\to\ZZ^{(V)}$. Moreover recall the Notation \ref{notation v}.

    \begin{lemma}\label{g vettori corrispondon a hw}
         For every $v\in V$, the image through $\eta$ of the leading term of $Q_v$ is precisely $g^{(\infty)}_{v}$; in other words:
        \[(\eta\ \circ\ \Xi)\left(Q_v\right)=g^{(\infty)}_v.\]
    \end{lemma}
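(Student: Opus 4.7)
The plan is to chain together the identifications already established in the excerpt so that every ingredient on the left-hand side of the claim migrates step by step into the data describing $g^{(\infty)}_v$ on the right-hand side. Fix $v\in V$. By Theorem \ref{teo 7.4 ghl} and the parametrization adopted in Notation \ref{notation v}, there exist $i\in I$, $w=s_{i_1}\cdots s_{i_t}\in W$ and $s\in\ZZ$ (coming from Proposition \ref{g vector braid action} and Remark \ref{remark g vector braid action}) such that
\[
Q_v = Q_{w(\omega_i),\,q^{m_i+2s}}
\qquad\text{and}\qquad
g^{(\infty)}_v = \Theta_{i_1}\cdots\Theta_{i_t}\bigl(\boldsymbol{e}_{(i,m_i)}\bigr)[s].
\]
So the whole question is to compute $(\eta\circ\Xi)(Q_{w(\omega_i),q^{m_i+2s}})$ and match it with this shifted braid image.

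Next I would apply Lemma \ref{T gives leading term} to rewrite the truncation as
\[
\Xi\bigl(Q_{w(\omega_i),q^{m_i+2s}}\bigr) = \sigma\circ T_w\circ\sigma\bigl(\Psi_{i,q^{m_i+2s}}\bigr),
\]
and then Remark \ref{confronto tra operatori chari}, which is precisely the translation between Chari's braid action on $\mathcal{Y}'$ and the action $\Theta_w$ on $\ZZ^{(V)}$, to conclude
\[
(\eta\circ\Xi)(Q_v) \;=\; \eta\circ\sigma\circ T_w\circ\sigma\bigl(\Psi_{i,q^{m_i+2s}}\bigr) \;=\; \Theta_w\bigl(\boldsymbol{e}_{(i,m_i+2s)}\bigr).
\]
At this point only one thing remains: verify that $\Theta_w(\boldsymbol{e}_{(i,m_i+2s)}) = \Theta_w(\boldsymbol{e}_{(i,m_i)})[s]$. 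This is a direct check from Definition \ref{azione braid su base standard}: each generator $\Theta_j$ acts on $\boldsymbol{e}_{(k,r)}$ by formulas that depend only on the relative second coordinate, so $\Theta_j$ commutes with the shift operation $[s]$ (since shifting the second coordinate by $2s$ throughout a formula like $-\boldsymbol{e}_{(i,r-2)}+\sum_{k:c_{ik}=-1}\boldsymbol{e}_{(k,r-1)}$ gives the same thing as applying $\Theta_j$ after the shift). Iterating, $\Theta_{i_1}\cdots\Theta_{i_t}$ commutes with $[s]$, which delivers the desired identity.

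I do not expect a real obstacle: all the nontrivial content has been packaged upstream in Lemma \ref{T gives leading term}, Proposition \ref{g vector braid action} and Theorem \ref{teo 7.4 ghl}. The only point deserving a little care is bookkeeping: making sure that the shift $s$ and the choice of highest red vertex $m_i$ appearing in Theorem \ref{teo 7.4 ghl} match on both sides, and that the commutation between $\Theta_w$ and the shift $[s]$ is invoked in the correct direction. Once that is settled the chain of equalities closes and the lemma follows.
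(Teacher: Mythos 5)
Your proposal is correct and follows essentially the same argument as the paper: both rely on Theorem \ref{teo 7.4 ghl} to identify $Q_v$ with $Q_{w(\omega_i),q^{m_i+2s}}$, chain Lemma \ref{T gives leading term} with Remark \ref{confronto tra operatori chari}, and then close the gap by observing that each $\Theta_j$ commutes with the shift operation $[s]$ (the paper states $\Theta_w(\boldsymbol{u}[s])=\Theta_w(\boldsymbol{u})[s]$ at the outset, and you verify it at the end). The only cosmetic difference is that the paper runs the chain of equalities starting from $g^{(\infty)}_v$ while you run it starting from $(\eta\circ\Xi)(Q_v)$.
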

    \begin{proof}
    First one can check that for all $s\in \ZZ$, $w\in W$ and $\boldsymbol{u}\in\ZZ^{(V)}$, $\Theta_w(\boldsymbol{u}[s])=\Theta_w(\boldsymbol{u})[s]$. Then, as in Theorem \ref{teo 7.4 ghl}, suppose $g^{(\infty)}_v=\Theta_{i_1}\cdots \Theta_{i_t}\left(\boldsymbol{e}_{(i,m_i)}\right)[s]$. Let us set $w=s_{i_1}\cdots s_{i_t}$. Thus $Q_v=Q_{w(\omega_i),q^{m_i+2s}}$. Using Lemma \ref{T gives leading term} and Remark \ref{confronto tra operatori chari}, we have:
    \begin{align*}
        g^{(\infty)}_v &= \Theta_w(\boldsymbol{e}_{(i,m_i)})[s]\\
        &=\Theta_w(\boldsymbol{e}_{(i,m_i)}[s])\\
        &=\eta\circ\sigma\circ T_w\circ \sigma(\Psi_{i,q^{m_i+2s}})\\
        &=\eta \circ \Xi(Q_{{w(\omega_i),q^{m_i+2s}}})\\
        &= \eta\circ \Xi(Q_v).
    \end{align*}
    \end{proof}
    \begin{remark}\label{notazione leading term}
         Since the columns of the matrix $G^{(\infty)}$ are the stabilized $g$-vectors $g^{(\infty)}_v$, $v\in V$, thanks to Lemma \ref{g vettori corrispondon a hw}, we will sometimes denote the leading term of $Q_v$ by $\Xi(Q_v)=\prod_{v'}\Psi_{v'}^{G^{(\infty)}_{v',v}}\in \mathcal{M}$.
    \end{remark}
    \begin{example}
         Let us consider type $A_2$. Following \cite[Example 6.4]{ghl}, we have:
         \begin{align*}
             Q_{\omega_1,q^r}&=\Psi_{1,q^r}\  &&\Rightarrow\  (\eta\ \circ\ \Xi)\left(Q_{\omega_1,q^r}\right)&&=\boldsymbol{e}_{(1,r)}\\
             Q_{s_1(\omega_1),q^r}&=\Psi_{1,q^{r-2}}^{-1}\Psi_{2,q^{r-1}}\Sigma_{1,q^{r-2}}&&\Rightarrow\ (\eta\ \circ\ \Xi)\left(Q_{s_1(\omega_1),q^r}\right)&&= -\boldsymbol{e}_{(1,r-2)}+\boldsymbol{e}_{(2,r-1)}\\
             Q_{s_2s_1(\omega_1),q^r}&= \Psi_{2,q^{r-3}}^{-1}\Sigma_{21,q^{r-2}}&&\Rightarrow\ (\eta\ \circ\ \Xi)\left(Q_{s_2s_1(\omega_1),q^r}\right)&&= -\boldsymbol{e}_{(2,r-3)},
             \end{align*}
             where 
             \[\Sigma_{21,q^{r-2}}=\sum_{0\leq l\leq k}\left(\prod_{i=0}^{k-1}A_{2,q^{r-3-2i}}^{-1}\prod_{j=0}^{l-1}A_{1,q^{r-2-2j}}^{-1}\right),\] and where we have used the isomorphism $\eta$ between $\mathcal{M}$ and $\ZZ^{(V)}$. We also have
             \[Q_{s_2(\omega_1),q^r}=Q_{\omega_1,q^r},\quad Q_{s_1s_2(\omega_1),q^r}=Q_{s_1(\omega_1),q^r},\quad Q_{s_2s_1s_2(\omega_1),q^r}=Q_{s_1s_2s_1(\omega_1),q^r}=Q_{s_2s_1(\omega_1),q^r}\] and using the symmetry of the root system of type $A_2$, we can find the expressions for $Q$-variables of type $Q_{w(\omega_2),q^r}$ by switching $1$ and $2$ above. 
    
            \begin{figure}[H]\label{q var come cluster var}
                \centering
             
    \[\begin{tikzcd}
    	\vdots \\
    	{\underline{Q}_{\omega_1,q^2}} & \vdots &&&& \vdots \\
    	& {\underline{Q}_{w_2,q}} &&&& {\underline{Q}_{\omega,q^4}} \\
    	{\underline{Q}_{\omega_1,q^0}} &&&&& {\underline{Q}_{\omega,q^2}} \\
    	{\underline{Q}_{s_1(\omega_1),q^0}} &&&&& {\underline{Q}_{\omega,q^0}} \\
    	& {\underline{Q}_{s_1(\omega_2),q^{-1}}} &&&& {\underline{Q}_{s(\omega),q^0}} \\
    	& {\underline{Q}_{s_1s_2(\omega_2),q^{-1}}} &&&& {\underline{Q}_{s(\omega),q^{-2}}} \\
    	{\underline{Q}_{s_1s_2(\omega_1),q^{-2}}} &&&&& {\underline{Q}_{s(\omega),q^{-4}}} \\
    	{\underline{Q}_{s_1s_2s_1(\omega_1),q^{-2}}} &&&&& {\underline{Q}_{s(\omega),q^{-6}}} \\
    	& {\underline{Q}_{s_1s_2s_1(\omega_2),q^{-3}}} &&&& {\underline{Q}_{s(\omega),q^{-8}}} \\
    	{\underline{Q}_{s_1s_2s_1(\omega_1),q^{-4}}} &&&&& {\underline{Q}_{s(\omega),q^{-10}}} \\
    	& \vdots &&&& \vdots \\
    	\vdots
    	\arrow[shorten >=6pt, from=2-1, to=1-1]
    	\arrow[from=2-1, to=3-2]
    	\arrow[from=3-2, to=2-2]
    	\arrow[from=3-2, to=4-1]
    	\arrow[from=3-6, to=2-6]
    	\arrow[from=4-1, to=2-1]
    	\arrow[from=4-1, to=5-1]
    	\arrow[from=4-6, to=3-6]
    	\arrow[from=5-1, to=6-2]
    	\arrow[from=5-6, to=4-6]
    	\arrow[from=5-6, to=6-6]
    	\arrow[from=6-2, to=3-2]
    	\arrow[from=6-2, to=7-2]
    	\arrow[from=7-2, to=8-1]
    	\arrow[from=7-6, to=6-6]
    	\arrow[from=8-1, to=5-1]
    	\arrow[from=8-1, to=9-1]
    	\arrow[from=8-6, to=7-6]
    	\arrow[from=9-1, to=10-2]
    	\arrow[from=9-6, to=8-6]
    	\arrow[from=10-2, to=7-2]
    	\arrow[from=10-2, to=11-1]
    	\arrow[from=10-6, to=9-6]
    	\arrow[from=11-1, to=9-1]
    	\arrow[from=11-1, to=12-2]
    	\arrow[from=11-6, to=10-6]
    	\arrow[from=12-2, to=10-2]
    	\arrow[from=12-6, to=11-6]
    	\arrow[from=13-1, to=11-1]
    \end{tikzcd}\]
    \caption{The image by $F$ of an initial seed for $\mathcal{A}_{w_0}$ in type $A_2$ on the left and type $A_1$ on the right.}
    \label{fig:seed con q variables}
    \end{figure}
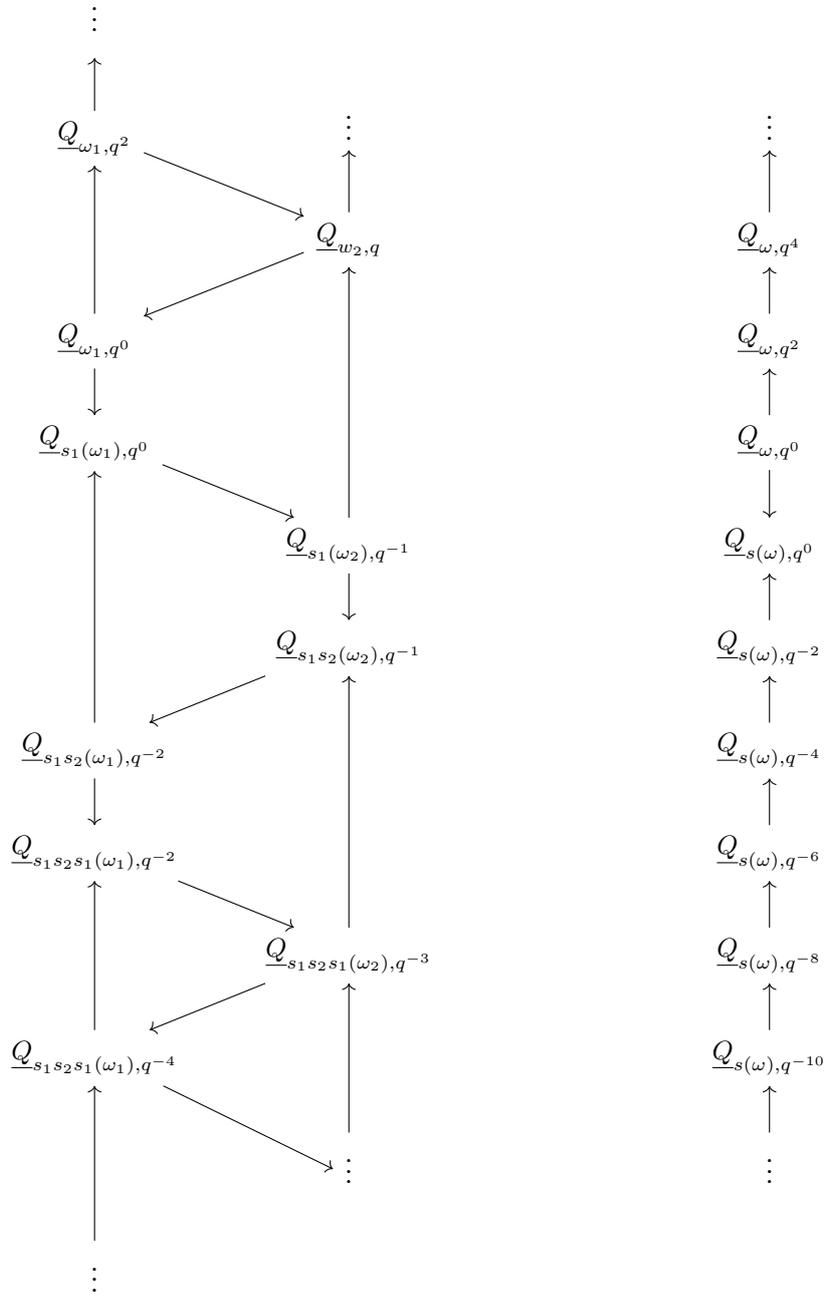 
    Theorem \ref{teo 7.4 ghl} tells us to see the initial cluster variables for $\mathcal{A}_{w_0}$ as in Figure \ref{fig:seed con q variables}. Then one can observe that replacing each $\underline{Q}$-variable in Figure \ref{fig:seed con q variables} with the leading term of the non-normalized $Q$-variable, we recover the quiver on the right of Figure \ref{fig:stabilized g vectors}.
    \end{example}

    Now we want to compare the quantum tori $(T_{t,e},\ast_e)$ from Definition \ref{quasi toro lea} and $(T_{t,c},\ast_c)$. To do so, we identify $T_{t,e}$ with a deformed subalgebra of $\mathcal{Y}'$, namely we take as generators the elements $\Psi_{i,q^r}^{\pm 1}$, for $(i,r)\in V$, that for brevity we might denote $\Psi_v^{\pm 1}$, for $v\in V$. The matrix $\Lambda_e$ is the same defined in Equation~\eqref{lambda e}. 
    \begin{theorem}\label{iso tori}
        There is an isomorphism of $\ZZ\big[t^{\pm\frac{1}{2}}\big]$-algebras
        \[\mathcal{G}:T_{t,c}\to T_{t,e}\] induced by the assignment
        \[Q_v\mapsto \prod_{v'\in V}  \Psi_{v'}^{G^{(\infty)}_{v',v}},\] where $\prod_{v'\in V}\Psi_{v'}^{G^{(\infty)}_{v',v}}$ is a commutative monomial, as defined in Remark \ref{commutative monomial}. Thus, we have also
        \[\mathcal{T}_{t,c}\simeq\mathcal{T}_{t,e}.\]
    \end{theorem}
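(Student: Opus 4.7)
The plan is to exhibit $\mathcal{G}$ as an algebra homomorphism by verifying it preserves the defining $t$-commutation relations, and then to produce an explicit two-sided inverse built from $(G^{(\infty)})^{-1}$. The proposed images $\mathcal{G}(Q_v) = \prod_{v'} \Psi_{v'}^{G^{(\infty)}_{v',v}}$ are well-defined commutative monomials of $T_{t,e}$ because Theorem \ref{teo stabilized g vectors} guarantees that each column of $G^{(\infty)}$ has finite support.

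First I would verify the $t$-commutation relations. Applying the conventions of Remark \ref{commutative monomial} to $T_{t,e}$, any two commutative monomials $M_a := \prod_{v'} \Psi_{v'}^{a_{v'}}$ and $M_b := \prod_{v'} \Psi_{v'}^{b_{v'}}$ satisfy $M_a \ast_e M_b = t^{a^T \Lambda_e b}\, M_b \ast_e M_a$, where $a,b$ are viewed as finitely-supported columns indexed by $V$. Specializing $a = g^{(\infty)}_v$ and $b = g^{(\infty)}_w$, i.e.\ the $v$-th and $w$-th columns of $G^{(\infty)}$, one obtains
\[\mathcal{G}(Q_v) \ast_e \mathcal{G}(Q_w) \;=\; t^{\,(g^{(\infty)}_v)^T \Lambda_e\, g^{(\infty)}_w}\, \mathcal{G}(Q_w) \ast_e \mathcal{G}(Q_v) \;=\; t^{(\Lambda_c)_{v,w}}\, \mathcal{G}(Q_w) \ast_e \mathcal{G}(Q_v),\]
where the second equality is exactly the definition \eqref{definizione lambda c} of $\Lambda_c$ read off entry by entry. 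Since these are the defining relations of $T_{t,c}$, the assignment extends uniquely to a $\ZZ[t^{\pm 1/2}]$-algebra homomorphism $\mathcal{G}$.

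To obtain bijectivity I would construct the inverse explicitly. By Corollary \ref{G invertibile} the matrix $G^{(\infty)}$ is invertible; moreover it is block-diagonal with finite blocks, so $(G^{(\infty)})^{-1}$ also has finite-support columns. Set $\mathcal{H} \colon T_{t,e} \to T_{t,c}$ on generators by $\Psi_v \mapsto \prod_{v'} Q_{v'}^{\,(G^{(\infty)})^{-1}_{v',v}}$, interpreted as a commutative monomial. The same computation with the roles of the two tori swapped -- using $\Lambda_e = (G^{(\infty)})^{-T} \Lambda_c (G^{(\infty)})^{-1}$, which is just \eqref{definizione lambda c} inverted -- shows that $\mathcal{H}$ respects $t$-commutations, hence extends to an algebra homomorphism. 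That $\mathcal{G} \circ \mathcal{H}$ and $\mathcal{H} \circ \mathcal{G}$ are the identity on generators then reduces to the matrix identities $G^{(\infty)} (G^{(\infty)})^{-1} = (G^{(\infty)})^{-1} G^{(\infty)} = \mathrm{Id}$, combined with the combinatorics of composing commutative monomials.

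The second assertion $\mathcal{T}_{t,c} \simeq \mathcal{T}_{t,e}$ then follows by extending scalars along $\mathcal{G}$: tensoring with $\mathcal{E}$ over $\ZZ[t^{\pm 1/2}]$ and then with $[\underline{P}]$ over $[P]$ produces the isomorphism of completed tori. The main technical obstacle I anticipate is purely bookkeeping, namely keeping track of the half-integer $t$-prefactors produced when one unfolds a commutative monomial into an ordered product $\overrightarrow{\ast_e}$ and verifying that these factors cancel when $\mathcal{G}$ and $\mathcal{H}$ are composed; no convergence or finiteness difficulty arises, because every column of both $G^{(\infty)}$ and $(G^{(\infty)})^{-1}$ is finitely supported.
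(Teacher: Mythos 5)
Your proposal is correct and follows essentially the same approach as the paper's proof: the heart of both arguments is the computation showing that $\mathcal{G}(Q_v)\ast_e \mathcal{G}(Q_w) = t^{(g^{(\infty)}_v)^T\Lambda_e g^{(\infty)}_w}\mathcal{G}(Q_w)\ast_e \mathcal{G}(Q_v) = t^{(\Lambda_c)_{v,w}}\mathcal{G}(Q_w)\ast_e \mathcal{G}(Q_v)$, with bijectivity coming from the invertibility of $G^{(\infty)}$ (Corollary \ref{G invertibile}). You merely make explicit the inverse homomorphism $\mathcal{H}$ that the paper leaves implicit, which is a reasonable elaboration rather than a different route.
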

    \begin{proof}
        The definition of $\mathcal{G}$ on the generators makes sense thanks to Lemma \ref{g vettori corrispondon a hw} and Remark \ref{notazione leading term}. Moreover, since $G^{(\infty)}$ is invertible, $\mathcal{G}$ is an isomorphism. We are left to prove that $\mathcal{G}$ is well defined on the $t$-commutation relations. So, let us fix $v,v'\in V$ and compute:
        \begin{align*}
            \mathcal{G}(Q_v)\ast_e \mathcal{G}(Q_{v'})&= \left(\prod_{u\in V}\left(\Psi_u^{G^{(\infty)}_{u,v}}\right)\right)\ast_e \left(\prod_{w\in V}\left(\Psi_w^{G^{(\infty)}_{w,v'}}\right)\right)\\
            &=t^{\sum_{u,w\in V}G^{(\infty)}_{u,v}G^{(\infty)}_{w,v'}(\Lambda_e)_{u,w}} \mathcal{G}(Q_{v'}) \ast_e  \mathcal{G}(Q_v)\\
            &=t^{\sum_{u,w\in V}(G^{(\infty)})^T_{v,u}(\Lambda_e)_{u,w}G^{(\infty)}_{w,v'}}\mathcal{G}(Q_{v'}) \ast_e  \mathcal{G}(Q_v)\\
            &= t^{(G^{(\infty)^T}\Lambda_eG^{(\infty)})_{v,v'}}\mathcal{G}(Q_{v'}) \ast_e  \mathcal{G}(Q_v)\\
            &=t^{(\Lambda_c)_{v,v'}}\mathcal{G}(Q_{v'}) \ast_e  \mathcal{G}(Q_v).
        \end{align*}
        This concludes the proof.
    \end{proof}
    
    \begin{remark}\label{rk commutazione q var}
        This Theorem enables us to understand how two $Q$-variables of the form $Q_v$, $Q_{v'}$ $t$-commute, by considering their highest weight monomials and then using  $\Lambda_e$. Then, we obtain the relations for the $\underline{Q}$-variables as well, since they are a normalized version of the $Q$-variables.
    \end{remark}

    \begin{example}
        We continue Examples \ref{esempio A1 1} and \ref{esempio A1 2} in type $A_1$. By Theorem \ref{teo 7.4 ghl} and Example \ref{esempio A1 1} (the computation of stabilized $g$-vectors) we know that the initial cluster variables are 
        \[\underline{Q}_{\omega,q^r},\ \forall r\geq 0,\qquad \underline{Q}_{s(\omega),q^r}\ \forall r\leq -2,\qquad (r\in2\ZZ).\] Now, in order to understand how they $t$-commute, we look at the leading terms of the corresponding $Q$-variables, that are respectively:
        \[\Psi_{q^r},\ \forall r\geq 0,\qquad \Psi_{q^{r-2}}^{-1},\ \forall r\leq -2,\qquad (r\in 2\ZZ).\] Therefore, by the observations in Remark \ref{rk commutazione q var}, the cluster variables $t$-commute as the leading terms. 
        Concretely, we have that for all $s,r\in 2\ZZ$, $s\geq  r,$ setting $m:=\frac{s-r}{2}$, using the function $f$ from Example~\ref{f}, we have:
        \begin{align*}
            \underline{Q}_{\omega,q^s}\ast_c\underline{Q}_{\omega,q^r}&=t^
            {f(m)}\underline{Q}_{\omega,q^r}\ast_c \underline{Q}_{\omega,q^s}\\
            \underline{Q}_{s(\omega),q^s}\ast_c \underline{Q}_{s(\omega),q^r}&=t^{f(m)}\underline{Q}_{s(\omega),q^r}\ast_c \underline{Q}_{s(\omega),q^s}\\
            \underline{Q}_{\omega,q^s}\ast_c \underline{Q}_{s(\omega),q^r}&=t^{-f(m)}\underline{Q}_{s(\omega),q^r}\ast_c \underline{Q}_{\omega,q^s}.
            \end{align*} One can compare this with the matrix $\Lambda_c$ obtained in Example \ref{esempio A1 2} and observe that the first two type of $t$-commutations are encoded in the diagonal blocks, while the third relation correspond to the bottom-left block. We will say more about type $A_1$ in Section \ref{section q oscillator}.
    \end{example}
    \begin{example}\label{esempio A2}
           We treat now type $A_2$. See Figure \ref{fig:2 quiver A2} for the quiver $\Gamma_e$ and $\Gamma_c$. Applying Equation~\eqref{lambda e} to our case (see \cite[Example 3.23]{b21} for the Cartan datum) we obtain the matrix $\Lambda_e$, that we represent here as a table to make it more readable.\\
           
          \begin{table}[H]
          $\Lambda_e=$\\[5pt]
            \begin{tabular}{l|l|l|l|l|l|l|l|l|l|l|l}
        
             & (1,-8) & (2,-7) & (1,-6) & (2,-5) & (1,-4) & (2,-3) & (1,-2) & (2,-1) & (1,0) & (2,1) & (1,2)  \\ \hline
        (1,-8)  & 0      & 0      & -1     & -1     & -1     & 0      & 0      & 0      & -1    & -1  & -1   \\ \hline
        (2,-7)  & 0      & 0      & 0      & -1     & -1     & -1     & 0      & 0      & 0     & -1  & -1  \\ \hline
        (1,-6)  & 1      & 0      & 0      & 0      & -1     & -1     & -1     & 0      & 0     & 0   & -1  \\ \hline
        (2,-5)  & 1      & 1      & 0      & 0      & 0      & -1     & -1     & -1     & 0     & 0   & 0  \\ \hline
        (1,-4)  & 1      & 1      & 1      & 0      & 0      & 0      & -1     & -1     & -1    & 0   & 0  \\ \hline
        (2,-3)  & 0      & 1      & 1      & 1      & 0      & 0      & 0      & -1     & -1    & -1  & 0  \\ \hline
        (1,-2)  & 0      & 0      & 1      & 1      & 1      & 0      & 0      & 0      & -1    & -1  & -1  \\ \hline
        (2,-1)  & 0      & 0      & 0      & 1      & 1      & 1      & 0      & 0      & 0     & -1  & -1  \\ \hline
        (1,0)   & 1      & 0      & 0      & 0      & 1      & 1      & 1      & 0      & 0     & 0   & -1  \\ \hline
        (2,1)   & 1      & 1      & 0      & 0      & 0      & 1      & 1      & 1      & 0     & 0   & 0  \\ \hline
        (1,2)   & 1      & 1      & 1      & 0      & 0      & 0      & 1      & 1      & 1     & 0   & 0
        \\
        \end{tabular}
        \end{table}
    
    Moreover, following \cite[Example 4.13]{ghl} we can compute the matrix of stabilized $g$-vectors $G^{(\infty)}$.\\
    
    \begin{table}[H]
    $G^{(\infty)}=$\\[5pt]
    \begin{tabular}{l|l|l|l|l|l|l|l|l|l|l|l}
    
            & (1,-8) & (2,-7) & (1,-6) & (2,-5) & (1,-4) & (2,-3) & (1,-2) & (2,-1) & (1,0) & (2,1) & (1,2) \\ \hline
    
    (1,-8)  & 0      & -1     &        &        &        &        &        &        &       &     &      \\ \hline
    (2,-7)  & -1     & 0      &        &        &        &        &        &        &       &     &  \\ \hline
    (1,-6)  &        &        & 0      & -1     &        &        &        &        &       &     & \\ \hline
    (2,-5)  &        &        & -1     & 0      &        &        &        &        &       &     &  \\ \hline
    (1,-4)  &        &        &        &        & -1     & -1     &        &        &       &     &  \\ \hline
    (2,-3)  &        &        &        &        & 1      & 0      &        &        &       &     &  \\ \hline
    (1,-2)  &        &        &        &        &        &        & -1     & 0      &       &     &   \\ \hline
    (2,-1)  &        &        &        &        &        &        & 1      & 1      &       &     &   \\ \hline
    (1,0)   &        &        &        &        &        &        &        &        & 1     & 0   & \\ \hline
    (2,1)   &        &        &        &        &        &        &        &        & 0     & 1   &  \\ \hline
    (1,2)   &        &        &        &        &        &        &         &       &       &     & 1 \\
    \end{tabular}
    \end{table}
    where the other blocks that we cannot represent are of the form
    \[\begin{pmatrix}
        0 & -1\\
        -1 & 0
    \end{pmatrix}\] on the top left corner and the identity on the bottom right one.
    Thus, using Definition \ref{definizione lambda c}, we can compute $\Lambda_c$, a finite part of which is of the form\\
    
    \begin{table}[H]
    $\Lambda_c=$\\[5pt]
    \begin{tabular}{l|l|l|l|l|l|l|l|l|l|l|l}
           & (1,-8) & (2,-7) & (1,-6) & (2,-5) & (1,-4) & (2,-3) & (1,-2) & (2,-1) & (1,0) & (2,1) & (1,2) \\ \hline
    (1,-8) & 0      & 0      & -1     & 0      & 0      & -1     & 0      & 0      & 0     & 1     & 1     \\ \hline
    (2,-7) & 0      & 0      & -1     & -1     & -1     & -1     & 0      & 0      & 1     & 1     & 1     \\ \hline
    (1,-6) & 1      & 1      & 0      & 0      & 1      & 0      & 0      & 1      & 0     & 0     & 0     \\ \hline
    (2,-5) & 0      & 1      & 0      & 0      & 0      & -1     & -1     & 0      & 0     & 0     & 1     \\ \hline
    (1,-4) & 0      & 1      & -1     & 0      & 0      & 0      & -1     & 0      & 0     & -1    & 0     \\ \hline
    (2,-3) & 1      & 1      & 0      & 1      & 0      & 0      & 0      & 1      & 1     & 0     & 0     \\ \hline
    (1,-2) & 0      & 0      & 0      & 1      & 1      & 0      & 0      & 0      & 1     & 0     & 0     \\ \hline
    (2,-1) & 0      & 0      & -1     & 0      & 0      & -1     & 0      & 0      & 0     & -1    & -1    \\ \hline
    (1,0)  & 0      & -1     & 0      & 0      & 0      & -1     & -1     & 0      & 0     & 0     & -1    \\ \hline
    (2,1)  & -1     & -1     & 0      & 0      & 1      & 0      & 0      & 1      & 0     & 0     & 0     \\ \hline
    (1,2)  & -1     & -1     & 0      & -1     & 0      & 0      & 0      & 1      & 1     & 0     & 0    
    \end{tabular}
    \end{table}
    
    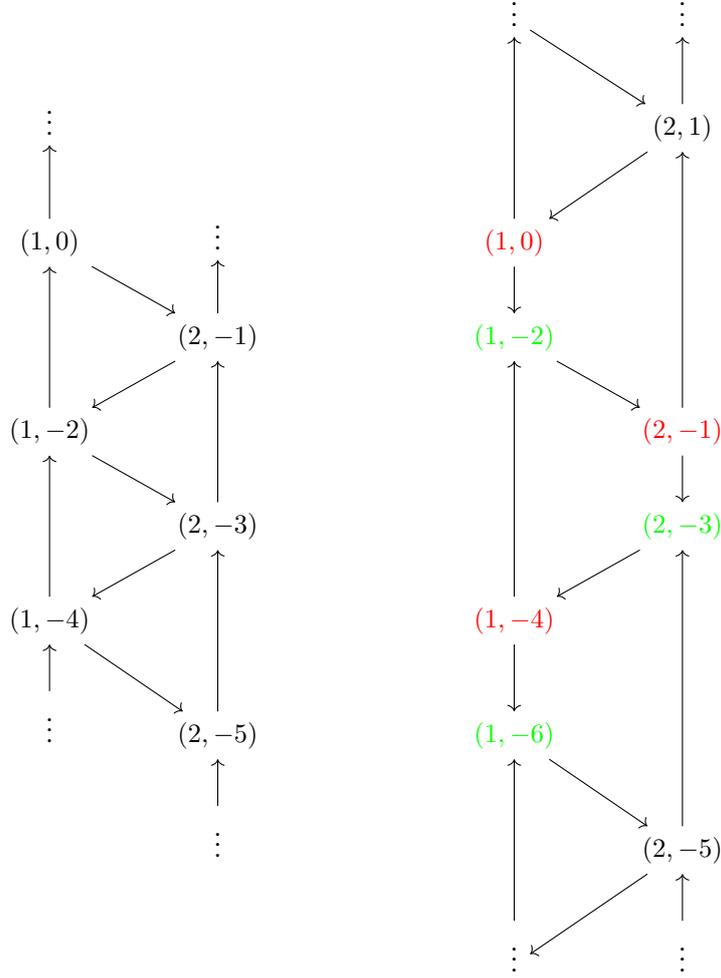
\begin{figure}[H]
        \centering
       \[\begin{tikzcd}
    	&&&& \vdots & \vdots \\
    	\vdots &&&&& {(2,1)} \\
    	{(1,0)} & \vdots &&& {\color{red}{(1,0)}} \\
    	& {(2,-1)} &&& {\color{green}{(1,-2)}} \\
    	{(1,-2)} &&&&& {\color{red}{(2,-1)}} \\
    	& {(2,-3)} &&&& {\color{green}{(2,-3)}} \\
    	{(1,-4)} &&&& {\color{red}{(1,-4)}} \\
    	\vdots & {(2,-5)} &&& {\color{green}{(1,-6)}} \\
    	& \vdots &&&& {(2,-5)} \\
    	&&&& \vdots & \vdots
    	\arrow[from=1-5, to=2-6]
    	\arrow[from=2-6, to=1-6]
    	\arrow[from=2-6, to=3-5]
    	\arrow[from=3-1, to=2-1]
    	\arrow[from=3-1, to=4-2]
    	\arrow[from=3-5, to=1-5]
    	\arrow[from=3-5, to=4-5]
    	\arrow[from=4-2, to=3-2]
    	\arrow[from=4-2, to=5-1]
    	\arrow[from=4-5, to=5-6]
    	\arrow[from=5-1, to=3-1]
    	\arrow[from=5-1, to=6-2]
    	\arrow[from=5-6, to=2-6]
    	\arrow[from=5-6, to=6-6]
    	\arrow[from=6-2, to=4-2]
    	\arrow[from=6-2, to=7-1]
    	\arrow[from=6-6, to=7-5]
    	\arrow[from=7-1, to=5-1]
    	\arrow[from=7-1, to=8-2]
    	\arrow[from=7-5, to=4-5]
    	\arrow[from=7-5, to=8-5]
    	\arrow[from=8-1, to=7-1]
    	\arrow[from=8-2, to=6-2]
    	\arrow[from=8-5, to=9-6]
    	\arrow[from=9-2, to=8-2]
    	\arrow[from=9-6, to=6-6]
    	\arrow[from=9-6, to=10-5]
    	\arrow[from=10-5, to=8-5]
    	\arrow[from=10-6, to=9-6]
    \end{tikzcd}\]
        \caption{The quivers $\Gamma_e$ on the left and $\Gamma_c$ on the right.}
        \label{fig:2 quiver A2}
    \end{figure}
    
    \end{example}

    \begin{definition}\label{def quantum groth}
        We define the quantum Grothendieck ring for the category $\mathcal{O}^{\sh}_{\ZZ}$ to be
        \[K_t(\mathcal{O}^{\sh}_{\ZZ}):=\mathcal{E}\hat{\otimes}_{\ZZ}\mathcal{A}_{t,w_0}.\]  
    \end{definition}
    In particular, we have a natural embedding $$K_t(\mathcal{O}^{\sh}_{\ZZ})\subset\mathcal{T}_{t,c}.$$

    In the next theorem we prove that the embedding \eqref{aaa} admits a quantum deformation. 
    \begin{theorem}\label{inclusione anelli}
        There are two injective ring morphisms
        \begin{align}
            \mathcal{I}^+_t\ &:\ K_t(\mathcal{O}^{\mathfrak{b},+}_{\ZZ})\to K_t(\mathcal{O}^{\sh}_{\ZZ})\\
            \mathcal{I}^-_t\ &:\ K_t(\mathcal{O}^{\mathfrak{b},-}_{\ZZ})\to K_t(\mathcal{O}^{\sh}_{\ZZ}),   
        \end{align}
        where the categories $\mathcal{O}^{\mathfrak{b},\pm}_{\ZZ}$ and their quantum Grothendieck rings have been recalled in Section \ref{section on cluster structure on gr rings}.
    \end{theorem}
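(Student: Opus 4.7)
The plan is to lift the classical embedding $\mathcal{A}_e\hookrightarrow\mathcal{A}_{w_0}$ of Lemma \ref{zzz} to a morphism of quantum cluster algebras $\mathcal{A}_{t,e}\hookrightarrow\mathcal{A}_{t,w_0}$ and then pass to the topological completions of Definitions \ref{def quantum gr ring lea} and \ref{def quantum groth}. The starting point is the isomorphism $\mathcal{G}\colon T_{t,c}\xrightarrow{\sim} T_{t,e}$ of Theorem \ref{iso tori}, together with the observation that by Theorem \ref{teo stabilized g vectors} one has $g^{(\infty)}_{(i,r)}=\boldsymbol{e}_{(i,r)}$ whenever $r\gg 0$; consequently $\mathcal{G}$ sends the initial $Q$-variable $Q_{\omega_i,q^r}$ directly to the generator $\Psi_{i,q^r}\in T_{t,e}$ for all such $(i,r)$, and after normalization the corresponding $\underline{Q}_{\omega_i,q^r}$ matches (up to a scalar in $[\underline{P}]$) the initial generator $z_{(i,r)}$ of $\mathcal{A}_{t,e}$ appearing in Lemma \ref{zzz}. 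The morphism $\mathcal{I}_t^-$ is then to be obtained by composing $\mathcal{I}_t^+$ with a quantum lift of the classical isomorphism $K_0(\mathcal{O}^{\mathfrak{b},+}_{\ZZ})\simeq K_0(\mathcal{O}^{\mathfrak{b},-}_{\ZZ})$ of \cite{hl16o}.

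Mirroring the proof of Lemma \ref{zzz}, any quantum cluster variable $\chi$ of $\mathcal{A}_{t,e}$ is obtained from the initial seed $(\Lambda_e,B_e)$ by a finite sequence $\mu$ of mutations affecting only a finite subset $W\subset V$ of vertices. I would then argue that for $m\gg 0$ the quantum seed $(\Lambda_c^{(m)},B_c^{(m)})$ of $\mathcal{A}_{t,w_0}$, obtained from $(\Lambda_c,B_c)$ by $m$ rounds of green mutations as in \eqref{successione di quiver}, agrees with $(\Lambda_e,B_e)$ on $W$. Since the quantum exchange relations depend only on the restriction of the compatible pair to the vertices involved, applying the same sequence $\mu$ inside $\mathcal{A}_{t,w_0}$ produces a quantum cluster variable whose image under $\mathcal{G}$ is exactly $\chi$. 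This shows that $\mathcal{G}^{-1}$ restricts to an injective algebra morphism $\mathcal{A}_{t,e}\hookrightarrow\mathcal{A}_{t,w_0}$; extending by $[\underline{P}]$ and completing yields $\mathcal{I}_t^+$.

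The main obstacle is a quantum counterpart of the matrix convergence of Theorem \ref{teo convergenza}, namely the claim that $\Lambda_c^{(m)}\bigl|_W=\Lambda_e\bigl|_W$ for $m$ large. I would address this by repeating the telescoping argument of Theorem \ref{teo convergenza} at the level of quantization matrices: because mutation transforms compatible pairs consistently via the Berenstein--Zelevinsky formulas, the identity $\Lambda_c=(G^{(\infty)})^T\Lambda_e G^{(\infty)}$ of Definition \ref{definizione lambda c} propagates to an identity $\Lambda_c^{(m)}=(H^{(m)})^T\Lambda_e H^{(m)}$, where $H^{(m)}$ is the stabilized $G$-matrix computed with respect to the shifted reference seed $\Sigma_c^{(m)}$. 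The block-diagonal stabilization from Theorem \ref{teo stabilized g vectors} then entails $H^{(m)}\bigl|_W=\mathrm{Id}\bigl|_W$ for $m\gg 0$, giving the required agreement. Once $\mathcal{I}_t^+$ is in place, Bittmann's construction in \cite{b21} applied verbatim with the category $\mathcal{O}^{\mathfrak{b},-}_{\ZZ}$ (using the same basic quiver $\Gamma_e$ and compatible pair $(\Lambda_e,B_e)$) produces a ring isomorphism $K_t(\mathcal{O}^{\mathfrak{b},+}_{\ZZ})\simeq K_t(\mathcal{O}^{\mathfrak{b},-}_{\ZZ})$ lifting the classical one, and composing with $\mathcal{I}_t^+$ provides $\mathcal{I}_t^-$.
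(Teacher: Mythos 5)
Your overall plan is aligned with what the paper does: invoke Theorem \ref{iso tori} to relate the ambient quantum tori and then run a quantum version of the argument of Lemma \ref{zzz}, and you correctly isolate the new ingredient that the classical argument does not supply -- namely that the mutated quantization matrix $\Lambda_c^{(m)}$ must agree with $\Lambda_e$ on the finite portion $W$ of vertices involved. That observation is exactly the point the paper glosses over with ``the arguments are the same as for classical cluster algebras,'' and your telescoping identity $\Lambda_c^{(m)}=(H^{(m)})^T\Lambda_e H^{(m)}$ is a reasonable way to package it, although as stated it is not an automatic consequence of Definition \ref{definizione lambda c}: the Berenstein--Zelevinsky mutation rule for $\Lambda$ uses elementary $E$-matrices, not $G$-matrices directly, so the step where you identify the product of $E$-matrices with $H^{(m)}$ needs either a citation or an argument along the lines of Lemma \ref{formula g matrix}/Proposition \ref{formula g-matrix infinita}.

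There is, however, a genuine gap in the conclusion you draw. You claim that ``applying the same sequence $\mu$ inside $\mathcal{A}_{t,w_0}$ produces a quantum cluster variable whose image under $\mathcal{G}$ is exactly $\chi$,'' and hence that ``$\mathcal{G}^{-1}$ restricts to an injective algebra morphism $\mathcal{A}_{t,e}\hookrightarrow\mathcal{A}_{t,w_0}$.'' This is false, and is already visible in type $A_1$: the cluster variable of $\Sigma_c^{(1)}$ at the vertex $(1,-2)$, obtained by green mutation from $\Sigma_c$, is
\[
Q_{-2}^{(1)}=M(-\boldsymbol{e}_{-2}+\boldsymbol{e}_0+\boldsymbol{e}_{-4})+M(-\boldsymbol{e}_{-2}),
\]
and since $\mathcal{G}(Q_0)=\Psi_{q^0}$, $\mathcal{G}(Q_{-2})=\Psi_{q^{-2}}^{-1}$, $\mathcal{G}(Q_{-4})=\Psi_{q^{-4}}^{-1}$ (by the stabilized $g$-vectors of Example \ref{esempio A1 1}), one gets $\mathcal{G}(Q_{-2}^{(1)})=\Psi_{q^{-2}}\Psi_{q^{0}}\Psi_{q^{-4}}^{-1}+\Psi_{q^{-2}}$, which has the correct leading term $\Psi_{q^{-2}}$ but is not equal to $\Psi_{q^{-2}}=z_{(1,-2)}$. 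Correspondingly, $\mathcal{G}^{-1}(z_{(1,-2)})=Q_{(1,-2)}^{-1}$, the inverse of a non-frozen initial cluster variable, which does not lie in $\mathcal{A}_{t,w_0}$. The morphism $\mathcal{I}_t^+$ is not a restriction of the monomial isomorphism $\mathcal{G}^{-1}$: as in the classical Lemma \ref{zzz}, it should be constructed as an abstract ring morphism sending each $z_{(i,r)}$ to the corresponding cluster variable of $\mathcal{A}_{t,w_0}$ in a suitable seed $\Sigma_c^{(m)}$, with Theorem \ref{iso tori} and your $\Lambda$-matrix agreement used only to verify that the $t$-commutation and quantum exchange relations are respected, not to identify the two cluster algebras inside a common torus.
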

    \begin{proof}
        We explain the proof for the first morphism $\mathcal{I}^+_t$, since the other one can be handled similarly. We have already seen that the ambient quantum tori $\mathcal{T}_{t,c}$ and $\mathcal{T}_{t,e}$ are isomorphic by Theorem \ref{iso tori}. Hence it suffices to check that $K_t(\mathcal{O}^{\mathfrak{b},+}_{\ZZ})\subset K_t(\mathcal{O}^{\sh}_{\ZZ})$, inside the isomorphic quantum tori. The arguments are the same as for classical cluster algebras above, for which we have already seen that $\mathcal{A}_e$ embeds into $\mathcal{A}_{w_0}$ (see Lemma \ref{zzz}). 
    \end{proof}
    \begin{remark}
        Combining this with Theorem \ref{inclusione tori hl e lea}, we obtain that the quantum \Groth of finite-dimensional representations of the ordinary quantum affine algebra is a natural subring of our quantum \Groth ring.
    \end{remark}

    \section{Application 1: quantum QQ-systems}\label{section qq}
    Recall that Theorem \ref{qq system} states that the $\underline{Q}$-variables satisfy the $QQ$-systems. These equations are realized as exchange relations for the cluster algebra structure $\mathcal{A}_{w_0}$ on $K_0(\mathcal{O}^{\sh}_{\ZZ})$ by Theorem \ref{iso ghl cluster}. We would like to find quantum analogues of $QQ$-systems and to do so we use our quantum cluster algebra $\mathcal{A}_{t,w_0}$, in particular its quantum exchange relations.

    We have defined in this paper a quantification of the cluster algebra $\mathcal{A}_{w_0}$. To each cluster variable of $\mathcal{A}_{w_0}$ corresponds a quantum cluster variable. In particular, we have now well-defined quantum $\underline{Q}$-variables. Indeed, by \cite[Proposition 8.2]{ghl} for every $(i,r)$ in $V$ and every $w$ in $W$, $\underline{Q}_{w(\omega_i),q^r}$ is the image of a cluster variable of $\mathcal{A}_{w_0}$ through the morphism of Theorem \ref{teo 7.4 ghl}. By abuse of notation, for the quantum $\underline{Q}$-variables we will use  the same notation $\underline{Q}_{w(\omega_i),q^{r}}$ as for the classical $\underline{Q}$-variables.

    The definition of a quantum cluster algebra passes through the notion of toric frame (see \cite[Section 4.3]{bz05} for all the details). So, let $\mathcal{F}$ be the skew field of fractions of $\mathcal{T}_{t,c}$. Let $M:\ZZ^{(V)}\to \mathcal{F}\setminus\{0\}$ be the toric frame defined by 
    \[\boldsymbol{e}_v\mapsto \underline{Q}_v.\] In particular, for all $v,w\in V$ we have
    \[M(\boldsymbol{e}_v)M(\boldsymbol{e}_w)=t^{(\Lambda_c)v,w/2}M(\boldsymbol{e}_v+\boldsymbol{e}_w),\quad M(\boldsymbol{e}_v)M(\boldsymbol{e}_w)=t^{(\Lambda_c)v,w}M(\boldsymbol{e}_w)M(\boldsymbol{e}_v),\] where the matrix $\Lambda_c$ is seen as a bilinear form on $\ZZ^{(V)}$. Moreover, for every $v\in V$, we have 
    \[M(0)=1,\quad M(\boldsymbol{e}_v)^{-1}=M(-\boldsymbol{e}_v).\] By \cite[Proposition 4.9]{bz05}, setting $B_c=(b_{v,w})_{v,w\in V}$ the exchange matrix for the quiver $\Gamma_c$ , we have:
    \begin{equation}\label{bbb}
        \underline{Q}_v^*=M\left(-\boldsymbol{e}_v+\sum_{b_{w,v}>0}b_{w,v}\boldsymbol{e}_w\right)+M\left(-\boldsymbol{e}_v-\sum_{b_{w,v}<0}b_{w,v}\boldsymbol{e}_w\right),
    \end{equation} where $\underline{Q}_v^*$ denotes the mutated variable and the two monomials on the right are commutative (in the sense of Remark \ref{commutative monomial}).
    Thus, in order to obtain a $t$-deformed version of $QQ$-systems, we should determine the correct values of $A,B,C,D$ in 
    \begin{align}
       \underline{Q}_{ws_i(\omega_i),q^r}&= t^A\ \underline{Q}_{w(\omega_i),q^{r-2}}^{-1}\left(\underline{Q}_{ws_i(\omega_i),q^{r-2}}\underline{Q}_{w(\omega_i),q^r}\right) +t^B\ \underline{Q}_{w(\omega_i),q^{r-2}}^{-1}\prod_{j:c_{ij}=-1}\underline{Q}_{w(\omega_j),q^{r-1}}\label{nnn}\\
       \underline{Q}_{ws_i(\omega_i),q^r}&= t^C\ \left(\underline{Q}_{ws_i(\omega_i),q^{r-2}}\underline{Q}_{w(\omega_i),q^r}\right)\underline{Q}_{w(\omega_i),q^{r-2}}^{-1} +t^D\ \left(\prod_{j:c_{ij}=-1}\underline{Q}_{w(\omega_j),q^{r-1}}\right)\underline{Q}_{w(\omega_i),q^{r-2}}^{-1} \label{mmm}
    \end{align} which are equations inside the quantum torus $\mathcal{T}_{t,c}$. Note that in \eqref{nnn} and \eqref{mmm} $\underline{Q}_{ws_i(\omega_i),q^r}$ is the mutated variable. 

    Now from Equation \eqref{bbb} and by the properties of the toric frame, to compute the precise powers of $t$ we only have to check how the $\underline{Q}$-variables in such equations $t$-commute. This can be done using Remark \ref{rk commutazione q var} and Lemma \ref{T gives leading term}. We make below an example explicit in type $A_1$ (see Examples \ref{esempio A1 1} and \ref{esempio A1 2} for other computations in type $A_1$). Hence we derive the quantum $QQ$-system in this case.

    \begin{proposition}\label{quantum qq}
        For all $r\in 2\ZZ$, the following equations hold:
        \begin{align*}
             \underline{Q}_{\omega,q^{r-2}}\underline{Q}_{s(\omega),q^r}-t^{-1}\underline{Q}_{\omega,q^{r}}\underline{Q}_{s(\omega),q^{r-2}}&=1\\
             \underline{Q}_{s(\omega),q^r}\underline{Q}_{\omega,q^{r-2}}-t\  \underline{Q}_{\omega,q^{r}}\underline{Q}_{s(\omega),q^{r-2}}&=1
        \end{align*} 
    \end{proposition}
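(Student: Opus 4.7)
Both identities in Proposition~\ref{quantum qq} arise from the same quantum exchange relation in $\mathcal{A}_{t,w_0}$, written in the two possible orderings. The plan is to establish the case $r=0$ by a direct computation in the initial seed $\Sigma_c$, and then extend to every $r\in 2\mathbb{Z}$ by a translation argument along the sequence of quivers $\Gamma_c^{(m)}$.

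\textbf{Case $r=0$.} In $\Gamma_c$ for $\g=sl_2$ (Figure~\ref{fig:due quiver per A1}, left) the red vertex is $0$, the green vertex is $-2$, and the two arrows incident to $-2$ are both incoming, one from $0$ and one from $-4$, with multiplicity one (see $B_c$ in Example~\ref{esempio A1 1}). By Theorem~\ref{teo 7.4 ghl} combined with the stabilized $g$-vector computation of Example~\ref{esempio A1 1}, the initial cluster variables at vertices $0$, $-2$, $-4$ are respectively $\underline{Q}_{\omega,q^{0}}$, $\underline{Q}_{s(\omega),q^{0}}$, $\underline{Q}_{s(\omega),q^{-2}}$, and mutating $\Sigma_c$ at $-2$ produces the seed $\Sigma_c^{(1)}$ in which the vertex $-2$ now carries $\underline{Q}_{\omega,q^{-2}}$ (Figure~\ref{fig:xxx}). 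The quantum exchange formula \eqref{bbb} at this vertex then reads
\[
\underline{Q}_{-2}^{\,*}\;=\;M\bigl(-\boldsymbol{e}_{-2}+\boldsymbol{e}_{0}+\boldsymbol{e}_{-4}\bigr)\;+\;M\bigl(-\boldsymbol{e}_{-2}\bigr).
\]
Using the entries $(\Lambda_c)_{-2,0}=(\Lambda_c)_{-2,-4}=1$ and $(\Lambda_c)_{0,-4}=0$ read from the matrix in Example~\ref{esempio A1 2}, together with the toric-frame identity $M(\boldsymbol{a})\cdot M(\boldsymbol{b})=t^{\Lambda_c(\boldsymbol{a},\boldsymbol{b})/2}M(\boldsymbol{a}+\boldsymbol{b})$, right-multiplication by $\underline{Q}_{-2}=\underline{Q}_{s(\omega),q^{0}}$ yields
\[
\underline{Q}_{\omega,q^{-2}}\cdot \underline{Q}_{s(\omega),q^{0}}\;=\;t^{-1}\,\underline{Q}_{\omega,q^{0}}\cdot \underline{Q}_{s(\omega),q^{-2}}\;+\;1,
\]
which is the first identity at $r=0$; left-multiplication by $\underline{Q}_{-2}$ produces the opposite $t$-exponent and yields the second identity at $r=0$.

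\textbf{General $r$.} The quiver $\Gamma_c^{(m)}$ is a uniform downward translation of $\Gamma_c$ by $2m$ steps (Remark~\ref{remark sullamutazione infinita}, equation~\eqref{successione di quiver}). Via the isomorphism $\mathcal{T}_{t,c}\simeq\mathcal{T}_{t,e}$ of Theorem~\ref{iso tori}, each $\underline{Q}$-variable in the identities corresponds to a commutative monomial in $\mathcal{T}_{t,e}$, and the $t$-commutation between two such monomials depends only on differences of spectral parameters because $\Lambda_e$ is translation-equivariant (governed by $\mathcal{F}_{1,1}$; see Example~\ref{f}). Consequently the same local exchange computation, performed at the (unique) green vertex of $\Sigma_c^{(m)}$, yields the identities for $r=-2m$ for every $m\ge 0$. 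For $r>0$ one argues identically using the inverse (red) mutation.

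\textbf{Main obstacle.} Once the relevant entries of the quantization matrix near the mutation vertex are in hand, the bookkeeping of the $t$-powers is a short computation. The step that requires the most care is the translation argument: one must verify that the local entries $(\Lambda_c^{(m)})$ near the green vertex of $\Sigma_c^{(m)}$ are independent of $m$ (up to the obvious relabelling). This ultimately reduces to the translation-equivariance of $\Lambda_e$ combined with the block-diagonal structure of $G^{(\infty)}$ from Theorem~\ref{teo stabilized g vectors}, which ensures that the definition $\Lambda_c=(G^{(\infty)})^{T}\Lambda_e G^{(\infty)}$ behaves compatibly under the shift $\Gamma_c\leadsto\Gamma_c^{(m)}$.
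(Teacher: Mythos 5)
The paper's own proof of Proposition~\ref{quantum qq} is a single remark: the relations are quantum exchange relations obtained by mutating at the red (source) vertex of one of the initial quivers $\Gamma_c^{(m)}$, together with the general recipe explained just before the proposition (compute the $t$-powers using Equation~\eqref{bbb} and Remark~\ref{rk commutazione q var}). Your proof makes that remark explicit and proceeds in the same spirit. The $r=0$ computation is correct: you correctly read off the column of $B_c$ and the entries $(\Lambda_c)_{-2,0}=(\Lambda_c)_{-2,-4}=1$, $(\Lambda_c)_{0,-4}=0$, and both one-sided multiplications of the exchange formula produce the two stated identities. So the approach and the conclusion agree with the paper.

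That said, the justification you give for the passage to general $r$ is imprecise and should be corrected. You assert that ``each $\underline{Q}$-variable in the identities corresponds to a commutative monomial in $\mathcal{T}_{t,e}$''. This is false for the mutated variable $\underline{Q}_{\omega,q^{r-2}}$: only the initial cluster variables of $\Sigma_c$ are sent to Laurent monomials by the isomorphism $\mathcal{G}$ of Theorem~\ref{iso tori}; a mutated cluster variable is a genuine Laurent polynomial, not a monomial, in the $\Psi_v$. Likewise, the claim that the relevant local entries of the mutated matrices $\Lambda_c^{(m)}$ ``are independent of $m$ up to relabelling'' is neither true as stated (the matrix $\Lambda_c$ is visibly not translation-equivariant: compare the top-left and bottom-left blocks in Example~\ref{esempio A1 2}) nor the right thing to invoke. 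The clean reason the $t$-power in every exchange relation is exactly $t^{\mp 1}$ is that compatibility with $-2\,\mathrm{Id}$ is preserved under quantum mutation \cite[Prop.~3.4]{bz05}: in any seed $\Sigma_c^{(m)}$, the compatibility $(B^{(m)})^T\Lambda^{(m)}=-2\,\mathrm{Id}$, combined with the fact that the green (resp.\ red) vertex of $\Sigma_c^{(m)}$ is a sink (resp.\ source) in type $A_1$, forces
\[
\sum_{b_{w,k}>0} b_{w,k}\,\Lambda^{(m)}_{w,k}=-2,\qquad
\sum_{b_{w,k}<0} b_{w,k}\,\Lambda^{(m)}_{w,k}=0
\]
at a sink $k$ (and the two sums swap at a source). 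Plugging this into \eqref{bbb} gives the coefficients $t^{-1}$ and $1$ directly, with no need for any translation-invariance claim, and it works uniformly for all $m\geq 0$ and for the red mutation at $r>0$. Together with Theorem~\ref{teo 7.4 ghl}, which identifies the cluster variables in every seed $\Sigma_c^{(m)}$ with the appropriate $\underline{Q}$-variables, this completes the argument. Replacing your translation-equivariance paragraph with this compatibility observation makes the proof tight.
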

    These equations come from mutations at the source of one initial quiver for $\mathcal{A}_{t,w_0}$:

    \[\begin{tikzcd}
    	\cdots & {\underline{Q}_{s(\omega),q^{r-4}}} & {\underline{Q}_{s(\omega),q^{r-2}}} & {\underline{Q}_{\omega,q^{r-2}}} & {\underline{Q}_{\omega,q^{r}}} & \cdots
    	\arrow[from=1-1, to=1-2]
    	\arrow[from=1-2, to=1-3]
    	\arrow[from=1-4, to=1-3]
    	\arrow[from=1-4, to=1-5]
    	\arrow[from=1-5, to=1-6]
    \end{tikzcd}\]
    As far as the author knows, this the first time this deformation of the $QQ$-system is obtained. Note that the classical $QQ$-system is sometimes called the quantum Wronskian relation. The two notions of quantizations should not be confused here: our terminology emphasizes the fact that we obtain a relation between non-commutative variables.
    
    We consider again type $A_1$. If we mutate the initial quiver for $\mathcal{A}_{t,w_0}$ at a vertex belonging to an equi-oriented subquiver, we find another type of quantum exchange relations, called quantum Baxter relations. This is coherent with the fact that $K_t(\mathcal{O}^{\mathfrak{b,+}})$ is a subring of $K_t(\mathcal{O}^{\sh}_{\ZZ})$ by Theorem \ref{inclusione anelli}.

    \[\begin{tikzcd}
    	\cdots & {\underline{Q}_{\omega,q^{r-2}}} & {\underline{Q}_{\omega,q^{r}}} & {\underline{Q}_{\omega,q^{r+2}}} & {\underline{Q}_{\omega,q^{r+4}}} & \cdots
    	\arrow[from=1-1, to=1-2]
    	\arrow[from=1-2, to=1-3]
    	\arrow[from=1-3, to=1-4]
    	\arrow[from=1-4, to=1-5]
    	\arrow[from=1-5, to=1-6]
    \end{tikzcd}\]
    The quantum Baxter relations were first discovered by Bittmann. We denote by $x$ the variable obtained from $\underline{Q}_{\omega,q^r}$ by mutation. It is the $(q,t)$-character of a so-called fundamental module for $\mathcal{U}_q(\hat{\mathfrak{g}})$ (see {\cite[Proposition 8.2.1]{b21}}).

    \begin{proposition}
        For all $r\in 2\ZZ $ the following equation holds in $\mathcal{T}_{t,c}$:
        \begin{align}
            x&=t^{\frac{1}{2}}\ \underline{Q}^{-1}_{\omega,q^r}\underline{Q}_{\omega,q^{r+2}}+t^{-\frac{1}{2}}\underline{Q}^{-1}_{\omega,q^r}\underline{Q}_{\omega,q^{r-2}}\\
            x&=t^{-\frac{1}{2}}\ \underline{Q}_{\omega,q^{r+2}}\underline{Q}^{-1}_{\omega,q^r}+t^{\frac{1}{2}}\underline{Q}_{\omega,q^{r-2}}\underline{Q}^{-1}_{\omega,q^r}.
            \end{align} 
    \end{proposition}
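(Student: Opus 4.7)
The plan is to derive both displayed expressions as the two orderings of a single quantum exchange relation in $\mathcal{A}_{t,w_0}$ obtained by mutating at the vertex $v=(1,r)$ corresponding to $\underline{Q}_{\omega_1,q^r}$ in the equi-oriented subquiver shown immediately above the statement.

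First I would fix an initial seed of $\mathcal{A}_{t,w_0}$ in which the column containing $v$ is locally equi-oriented around $v$; by Remark \ref{remark sullamutazione infinita} one may always translate $\Gamma_c$ downward sufficiently many times so that $v$ lies outside the red/green irregular region and the local picture matches the displayed subquiver. In this seed the only nonzero entries of the $v$-th column of the exchange matrix $B_c$ are $b_{(1,r-2),v}=1$ (incoming arrow $u\to v$) and $b_{(1,r+2),v}=-1$ (outgoing arrow $v\to w$). The quantum mutation formula \eqref{bbb} therefore reduces to
\[
x = M(-e_v+e_u) + M(-e_v+e_w),
\]
where $u=(1,r-2)$ and $w=(1,r+2)$.

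Next I would expand each of these commutative monomials into a non-commutative product, in both of the two possible orders, using the toric-frame identity $M(a)M(b)=t^{\Lambda_c(a,b)/2}M(a+b)$ together with $M(e_v)^{-1}=M(-e_v)$. This yields
\[
M(-e_v+e_u) = t^{\Lambda_c(e_u,e_v)/2}\,\underline{Q}_v^{-1}\underline{Q}_u = t^{-\Lambda_c(e_u,e_v)/2}\,\underline{Q}_u\underline{Q}_v^{-1},
\]
and analogously with $w$ in place of $u$. The relevant entries $\Lambda_c(e_u,e_v)$ and $\Lambda_c(e_w,e_v)$ are both $\pm 1$ by the commutation relations recorded in Example \ref{esempio A1 2}, specifically $\underline{Q}_{\omega,q^s}\ast_c\underline{Q}_{\omega,q^r}=t^{f((s-r)/2)}\underline{Q}_{\omega,q^r}\ast_c\underline{Q}_{\omega,q^s}$ for $s>r$, so each of the four rewritings carries a prefactor $t^{\pm 1/2}$. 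Collecting the two monomials in their $\underline{Q}_v^{-1}$-on-the-left form produces the first displayed identity, and collecting them in the $\underline{Q}_v^{-1}$-on-the-right form produces the second.

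The only obstacle is sign bookkeeping; there is no deeper analytic input beyond the compatibility of $(\Lambda_c,B_c)$ established in Proposition \ref{quantum cluster ade} and the explicit values of $\Lambda_c$ already computed in Example \ref{esempio A1 2}. Alternatively, since the four cluster variables involved all belong to the subalgebra $\mathcal{A}_{t,e}\subset\mathcal{A}_{t,w_0}$ and the mutated vertex lies in the equi-oriented part common to $\Gamma_e$ and (after translation) $\Gamma_c$, the identity can be transported verbatim from Bittmann's original statement via the torus isomorphism of Theorem \ref{iso tori} and the embedding $\mathcal{I}_t^+$ of Theorem \ref{inclusione anelli}, which provides an independent and essentially free derivation.
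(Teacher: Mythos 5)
Your approach is essentially the paper's: the paper does not reprove this statement but cites \cite[Proposition 8.2.1]{b21} and relies implicitly on the torus isomorphism of Theorem \ref{iso tori} together with the embedding of Theorem \ref{inclusione anelli} to transport Bittmann's quantum Baxter relation into $\mathcal{T}_{t,c}$; your concluding ``alternative'' argument is exactly that. Your primary route — mutating at $v=(1,r)$ in a downward translate $\Gamma_c^{(m)}$ where $v$ sits in the equi-oriented part, reducing the quantum exchange relation \eqref{bbb} to $x=M(-\boldsymbol{e}_v+\boldsymbol{e}_u)+M(-\boldsymbol{e}_v+\boldsymbol{e}_w)$ with $u=(1,r-2)$, $w=(1,r+2)$, and then expanding — is also a legitimate direct verification and is the natural way to check the formula inside $\mathcal{A}_{t,w_0}$.

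The intermediate identity you write down, however, has its indices transposed. From $M(a)M(b)=t^{\Lambda_c(a,b)/2}M(a+b)$ one gets
\[
M(-\boldsymbol{e}_v+\boldsymbol{e}_u)=t^{\Lambda_c(\boldsymbol{e}_v,\boldsymbol{e}_u)/2}\,\underline{Q}_v^{-1}\underline{Q}_u=t^{\Lambda_c(\boldsymbol{e}_u,\boldsymbol{e}_v)/2}\,\underline{Q}_u\underline{Q}_v^{-1},
\]
whereas you wrote the exponents $\Lambda_c(\boldsymbol{e}_u,\boldsymbol{e}_v)/2$ and $-\Lambda_c(\boldsymbol{e}_u,\boldsymbol{e}_v)/2$ in those two positions — both opposite in sign since $\Lambda_c$ is skew-symmetric. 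Using $(\Lambda_c)_{(1,r),(1,r-2)}=(\Lambda_c)_{(1,r+2),(1,r)}=f(1)=-1$ from Example \ref{f} (equivalently Example \ref{esempio A1 2}), the corrected formula produces the coefficients $t^{-1/2}\underline{Q}_v^{-1}\underline{Q}_u$, $t^{1/2}\underline{Q}_v^{-1}\underline{Q}_w$ for the first displayed line and $t^{1/2}\underline{Q}_u\underline{Q}_v^{-1}$, $t^{-1/2}\underline{Q}_w\underline{Q}_v^{-1}$ for the second, matching the statement; following your formula literally flips all four exponents. So while the structure of the argument is right and you correctly identify that only ``sign bookkeeping'' remains, the bookkeeping formula you record is wrong as stated and must be transposed before the claimed conclusion actually follows.
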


    \section{Application 2: the quantum oscillator algebra as a quantum cluster algebra}\label{section q oscillator}

    In this section we prove that the quantum oscillator algebra (Definition \ref{quantum osc}) has the structure of a quantum cluster algebra inside our quantum Grothendieck ring. As a consequence, we obtain that the quantum oscillator algebra is also isomorphic to the quantum double Bruhat cell $\CC_t[\SL_2^{w_0,w_0}]$.\\
    Here we use the symbol $t$ for the quantum parameter in order to be consistent with the $t$-deformation of the Grothendieck ring that we have defined in previous sections. This means that the quantum oscillator algebra is denoted $\mathcal{U}_t^+(\sl_2)$.

    Following \cite{sqaahernandez}, we denote by $\mathcal{U}_{t,loc}^+(\sl_2)$ the localization of $\mathcal{U}_t^+(\sl_2)$ at the Casimir central element 
    \[C=ef+\frac{t^{-1}k}{(t-t^{-1})^2}=fe+\frac{tk}{(t-t^{-1})^2}.\]
    \begin{remark}
        We can use the Casimir element to give an equivalent definition of $\mathcal{U}_t^+(\sl_2)$ without the bracket relation, that is: the generators are $e,\ f,\ k^{\pm1},\ C$, with relations
        \[ke=t^2ek,\  kf=t^{-2}fk,\  kk^{-1}=k^{-1}k=1,\ C=ef+\frac{t^{-1}k}{(t-t^{-1})^2}=fe+\frac{tk}{(t-t^{-1})^2}.\] In this way the bracket relation follows easily:
        \[[e,f]=ef-fe=-\frac{t^{-1}k}{(t-t^{-1})^2}+\frac{tk}{(t-t^{-1})^2}=\frac{k}{(t-t^{-1})^2}(t-t^{-1})=\frac{k}{t-t^{-1}}.\]
    \end{remark}
    In what follows, by $\mathcal{U}_{t,\loc}^{+}$ we mean $\mathcal{U}_t^+(\sl_2)$ localized at the central element $C$ and extended with the square roots $k^{\pm\frac{1}{2}}$ of $k^{\pm 1}.$

    \subsection{The quantum cluster algebra $\mathcal{A}_t$}

    We aim at endowing $\mathcal{U}_{t,\loc}^+$ with the structure of quantum cluster algebra. First, we consider the quiver $\Gamma_c$, that in this case is the quiver on the left in Figure \ref{fig:alcuni quiver A1}. We extract from $\Gamma_c$ the full subquiver with vertices $\{-4,-2,0\}$ and we freeze the boundary vertices. We call $Q$ the resulting ice quiver. 

    \begin{figure}[h]
        \centering
        \[\begin{tikzcd}
    	Q:\  {\stackrel{-4}{\square}} && {\stackrel{-2}{\bullet}} && {\stackrel{0}{\square}}
    	\arrow[from=1-1, to=1-3]
    	\arrow[from=1-5, to=1-3]
    \end{tikzcd}\] 
        \caption{The ice quiver $Q$.}
        \label{fig:initial seed q osc}
    \end{figure}
    This will be an initial quiver for our new cluster algebra. We assign the cluster variables $\{a,b,c\}$ in the following way:

    \[\begin{tikzcd}
    	Q:\  {\stackrel{c}{\square}} && {\stackrel{a}{\bullet}} && {\stackrel{b}{\square}}
    	\arrow[from=1-1, to=1-3]
    	\arrow[from=1-5, to=1-3]
    \end{tikzcd}\] 
    
    As regards the quantization matrix, we can deduce it from the matrix $\Lambda_c$ of $\mathcal{A}_{t,w_0}$ from Equation \ref{def quantum cluster nostra} (see Example \ref{esempio A1 2} for the explicit computation of $\Lambda_c$ in type $A_1$). We extract from $\Lambda_c$ the $3\times 3$ submatrix with entries indexed by couples in $\{-4,-2,0\}$ and we call it $\bar{\Lambda}$. However, for our convenience, we prefer to reorder the columns and rows of $\bar{\Lambda}$ so that the first row/column will be the one indexed by $-2$ the second by $0$ and the third by $-4$. We call $\Lambda$ the resulting matrix.
    \begin{equation}
        \Lambda=\begin{pmatrix}
            0 & 1 & 1\\
            -1 & 0 & 0 \\
            -1 & 0 & 0
        \end{pmatrix}
    \end{equation}

    The exchange matrix for the quiver $\tilde{B}$ is a $3\times 1$ matrix with rows indexed as the rows of $\Lambda$ and the column that corresponds to the vertex $-2$.
    \[\tilde{B}=\begin{pmatrix}
        0\\
        1\\
        1
    \end{pmatrix}\]
    For the sake of clarity, we check that the couple $(\Lambda,\tilde{B})$ is compatible in the sense of quantum cluster algebras (see definition \ref{def compatibile}). Let us compute the product $\tilde{B}^T\Lambda$:
    \[\tilde{B}^T\Lambda=\begin{pmatrix}
        0 & 1 & 1
    \end{pmatrix}\begin{pmatrix}
        0 & 1 & 1\\
        -1 & 0 & 0\\
        -1 & 0 & 0
    \end{pmatrix}=\begin{pmatrix}
        -2 & 0 & 0
    \end{pmatrix}.\]
    
    Since the couple $(\Lambda,\tilde{B})$ is compatible, it defines a quantum cluster algebra, that we denote $\mathcal{A}_t$. The quantum torus in which it embeds is the $\ZZ\big[t^{\pm\frac{1}{2}}\big]$-algebra generated by $a^{\pm 1},b^{\pm 1},c^{\pm 1}$ subject to the $t$-commutation relations induced by $\Lambda.$ For later use, let us write down here the $t$-commutation relations:
    \begin{align}
        ab&=t\ ba \label{t rel 1}\\ 
        ac&=t\ ca \label{t rel 2}\\
        bc&=cb \label{t rel 3}.
    \end{align} 
     The quiver $Q$ has only one mutable vertex, that is $-2$. Let $a^*$ denote the new cluster variable obtained from $a$ by mutation of the seed at vertex $-2$. The quantum exchange relations read:
    \begin{align}
        aa^* &=1+t^{-1}\ bc \label{t ex 1}\\
        a^*a&=1+t\ bc \label{t ex 2}.
    \end{align} 
    Note that this is the same as the quantum $QQ$-system obtained in Proposition \ref{quantum qq}.\\
    The quantum cluster algebra $\mathcal{A}_t$ has only two clusters, so it is easy to present it as a $\ZZ\big[t^{\pm\frac{1}{2}}\big]$-algebra with generators $a,a^*,b^{\pm1},c^{\pm 1}$ subject to the $t$-commutation relations \eqref{t rel 1} \eqref{t rel 2}\eqref{t rel 3} and the quantum exchange relations \eqref{t ex 1}\eqref{t ex 2}. Note that we use the assumptions that frozen variables are invertible. In particular, $\mathcal{A}_{t}$ is not a subalgebra of $\mathcal{A}_{t,w_0}$ because its frozen variables are invertible, but it can be obtained as a localization of the subalgebra of $\mathcal{A}_{t,w_0}$ associated with the quiver $Q$ and with non invertible frozen vertices.
    \begin{remark}
        The element $z:=bc^{-1}$ is central in $\mathcal{A}_t$.
    \end{remark}

\subsection{The quantum double Bruhat cell $\CC[\SL_2^{w_0,w_0}]$}
    Double Bruhat cells were introduced by Fomin and Zelevinsky in \cite{fzbruhat}. We recall the definition of the double Bruhat cell $\SL_2^{w_0,w_0}$, where $w_0$ denotes the longest element of the Weyl group of $\sl_2$, thus in this case it is the simple reflection. It can be described explicitly as the set of complex matrices 
    \[M=\begin{pmatrix}
        a & b\\
        c & d
    \end{pmatrix}\] such that $\mathrm{det}(M)=1$ and $b,c$ are nonzero. This is an open, dense subset of $\SL_2(\CC)$. The quantum double Bruhat cell $\CC_t[\SL_2^{w_0,w_0}]$, defined in \cite{bz05}, has a presentation with generators $a,b^{\pm 1},c^{\pm 1},d$ subject to the following relations:
    \[ab=t\ ba,\ cd=t\ dc,\ ac=t\ ca,\ bd=t\ db,\] 
    \[bb^{-1}=b^{-1}b=1,\  cc^{-1}=c^{-1}c=1,\]
    plus the commutation relation
    \[bc=cb\] and the $t$-determinants relations
    \[ad-t^{-1}bc =1=da-t\ bc.\]

    Recall the presentation of the quantum cluster algebra $\mathcal{A}_t$ by generators $a,a^*,b^{\pm 1},c^{\pm 1}$ and relations.
    \begin{proposition}\label{iso alg cluster anello coord sl2}
        The assignment
        \begin{align*}
            a&\mapsto a\\
            b^{\pm 1} &\mapsto b^{\pm 1}\\
            c^{\pm 1} &\mapsto c^{\pm 1}\\
            d &\mapsto a^*
        \end{align*}
        extends to an isomorphism between $\CC_t[\SL_2^{w_0,w_0}]$ and the quantum cluster algebra $\mathcal{A}_t$.
    \end{proposition}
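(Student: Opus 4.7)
The plan is to exhibit mutually inverse ring homomorphisms between the two presentations. For the forward map $\Phi : \CC_t[SL_2^{w_0,w_0}] \to \mathcal{A}_t$, I would check each defining relation of $\CC_t[SL_2^{w_0,w_0}]$ on the images. The $t$-commutations $ab=tba$, $ac=tca$, $bc=cb$ are immediate since they coincide with the relations read off from the quantization matrix $\Lambda$. The two quantum determinant relations $ad - t^{-1}bc = 1$ and $da - t\,bc = 1$ translate under $\Phi$ into $aa^* = 1 + t^{-1}bc$ and $a^*a = 1 + t\,bc$, which are exactly the quantum exchange relations at the unique mutable vertex.

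The only non-trivial verifications are the $t$-commutations involving $d$, namely $cd = t\,dc$ and $bd = t\,db$, which become $ca^* = t\,a^*c$ and $ba^* = t\,a^*b$ in $\mathcal{A}_t$. From the exchange relation one writes
\[
a^* \;=\; a^{-1}\bigl(1 + t^{-1}bc\bigr) \;=\; a^{-1} + t^{-1}a^{-1}bc,
\]
and from $ab = t\,ba$ one derives $ba^{-1} = t\,a^{-1}b$. Then
\[
ba^* \;=\; ba^{-1} + t^{-1}(ba^{-1})bc \;=\; t\,a^{-1}b + a^{-1}b^2c,
\]
\[
a^*b \;=\; a^{-1}b + t^{-1}a^{-1}bcb \;=\; a^{-1}b + t^{-1}a^{-1}b^2c,
\]
where the last equality uses $cb = bc$. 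Hence $ba^* = t\,a^*b$. The identity $ca^* = t\,a^*c$ is obtained by the same argument, using $ca^{-1} = t\,a^{-1}c$ (which follows from $ac = t\,ca$) and commutativity of $b$ and $c$. This shows $\Phi$ is a well-defined ring homomorphism.

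Conversely, I would define $\Psi : \mathcal{A}_t \to \CC_t[SL_2^{w_0,w_0}]$ on generators by $a \mapsto a$, $a^* \mapsto d$, $b^{\pm 1} \mapsto b^{\pm 1}$, $c^{\pm 1} \mapsto c^{\pm 1}$, and check that every relation in the presentation of $\mathcal{A}_t$ (the three $t$-commutations among $a,b,c$ and the two exchange relations) holds in $\CC_t[SL_2^{w_0,w_0}]$. The $t$-commutations are among the defining relations, and the exchange relations are precisely the two quantum determinant relations. Hence $\Psi$ is also a well-defined ring homomorphism. Since $\Phi \circ \Psi$ and $\Psi \circ \Phi$ are the identity on generators, they are mutually inverse isomorphisms.

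The main potential obstacle is justifying that $\mathcal{A}_t$ really is \emph{presented} by the four generators $a, a^*, b^{\pm 1}, c^{\pm 1}$ subject only to the $t$-commutations and the two exchange relations, which is required for the inverse map $\Psi$ to be automatically well-defined without further checks. This is guaranteed because the quiver $Q$ has a single mutable vertex, so $\mathcal{A}_t$ has only two clusters and the presentation given immediately before the proposition is complete; the needed derived relations such as $ba^* = t\,a^*b$ are already consequences inside this presentation, as shown above.
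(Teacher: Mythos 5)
Your proof is correct and takes essentially the same approach as the paper, matching the two presentations relation by relation with the quantum determinant identities corresponding to the quantum exchange relations. You go further than the paper's one-sentence argument by explicitly deriving $ba^* = t\,a^*b$ and $ca^* = t\,a^*c$ from the exchange relation (working in the ambient quantum torus where $a^{-1}$ exists), which correctly handles the $t$-commutations $cd=t\,dc$ and $bd=t\,db$ of $\CC_t[SL_2^{w_0,w_0}]$ that the paper leaves implicit.
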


    \begin{proof}
        Using the assignment in the hypothesis, the defining relations of $\mathcal{A}_t$ correspond to those defining $\CC_t[\SL_2^{w_0,w_0}]$. Indeed the quantum determinant identities correspond to the quantum exchange relations. 
    \end{proof}
    \begin{remark}
        This Proposition is a special case of a more general theorem. We discuss this in Section~\ref{further questions}.
    \end{remark}
    \subsection{Isomorphism between $\mathcal{U}_{t,\loc}^+$ and $\mathcal{A}_t$}

    We consider scalar extensions $\CC(t)\otimes_{\ZZ[t^{\pm 1}]}\mathcal{U}_{t,\loc}^+$ and $\CC(t)\otimes_{\ZZ[t^{\pm 1}]}\mathcal{A}_t$, but we keep denoting them respectively $\mathcal{U}_{t,\loc}^+$ and $\mathcal{A}_t$. Recall the presentation of the $t$-oscillator algebra with the Casimir element. We construct a morphism of algebras
    \[\varphi:\mathcal{U}_{t,\loc}^+\to \mathcal{A}_t\] by assigning
    \begin{alignat*}{2}
        &e &&\mapsto za\\
        &f &&\mapsto a^*\\
        &k^{\pm 1} &&\mapsto \left(-(t-t^{-1})^2b^2\right)^{\pm 1}\\
        &C^{\pm1} &&\mapsto z^{\pm 1}.
    \end{alignat*} This is defined on the generators of $\mathcal{U}_{t,\loc}^+$. We have to check that $\varphi$ is well defined on the relations:
    \begin{gather*}
        \varphi(k)\varphi(e)=-(t-t^{-1})^2b^2za= -t^2(t-t^{-1})^2zab^2=t^2 \varphi(e)\varphi(k)\\
        \varphi(k)\varphi(f)=-(t-t^{-1})^2b^2a^*= -t^{-2}(t-t^{-1})^2a^*b^2=t^{-2}\varphi(f)\varphi (k)\\
        \varphi\left(ef+\frac{t^{-1}k}{(t-t^{-1})^2}\right)=zaa^*-t^{-1}b^2=z(1+t^{-1}bc)-t^{-1}b^2=z+t^{-1}bc^{-1}bc-t^{-1}b^2=z=\varphi(C).
    \end{gather*}
    We leave to the reader the remaining relations since they are similar to prove. We have proved that $\varphi$ is a well defined morphism of algebras.
    We construct the inverse of $\varphi$ in order to get an isomorphism.
    Let us call
    \[\psi:\mathcal{A}_{t}\to \mathcal{U}_{t,\loc}^+,\] the morphism of algebras defined on the generators by
    \begin{align*}
        a &\mapsto C^{-1}e\\
        a^* &\mapsto f\\
        b^{\pm 1} &\mapsto \left(\frac{ik^{\frac{1}{2}}}{t-t^{-1}}\right)^{\pm 1}\\
        c^{\pm 1} &\mapsto \left(\frac{iC^{-1}k^{\frac{1}{2}}}{t-t^{-1}}\right) ^{\pm 1}.
    \end{align*}
    As a consequence, $\psi(z)=C$.
    One can verify that $\psi$ is in fact well defined. For example, on the exchange relation we have
    \[\psi(a^*)\psi(a)=C^{-1}fe=C^{-1}\left(C-\frac{tk}{(t-t^{-1})^2}\right)=1+t\psi(b)\psi(c).\]
    Let us check that $\varphi$ is a right inverse of $\psi$.
    \begin{align*}
     e &\xmapsto{\varphi} za\xmapsto{\psi} CC^{-1}e=e,\\
        f &\xmapsto{\varphi} a^* \xmapsto{\psi} f,\\
        k &\xmapsto{\varphi} -(t-t^{-1})^2b^2 \xmapsto{\psi}-(t-t^{-1})^2\frac{(-k)}{(t-t^{-1})^2}=k,\\
        C&\xmapsto{\varphi} z\xmapsto{\psi} C.
    \end{align*}
    Similarly, one can prove that $\varphi$ is a left inverse for $\psi$. Thus, $\varphi$ and $\psi$ are inverse to each other, so we have proved
    \begin{theorem}\label{teo q osc is cluster}
        The map $\varphi:\mathcal{U}_{t,\loc}^+\to \mathcal{A}_t$ is an isomorphism of algebras. That is, the localized $t$-oscillator algebra is a quantum cluster algebra.
    \end{theorem}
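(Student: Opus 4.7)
The plan is to establish the isomorphism by constructing two algebra homomorphisms in opposite directions and verifying they are mutually inverse on generators. Since both algebras are presented by explicit generators and relations, the work reduces to (i) defining each map on generators, (ii) checking the defining relations of the source algebra hold in the image, and (iii) checking that the two compositions agree with the identity on generators.

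The first step is to locate natural correspondences that fix the form of $\varphi$. The Casimir element $C$ is central in $\mathcal{U}^+_{t,\text{loc}}$ and the element $z = bc^{-1}$ is central in $\mathcal{A}_t$, so sending $C \mapsto z$ is the natural choice. The Cartan element $k$ should go to a $t^2$-shifting quantity, and since $a b^2 = t^2 b^2 a$ (using $ab = tba$), the ansatz $k \mapsto \gamma\, b^2$ is forced for some scalar $\gamma$. Writing $b^2 = z \cdot bc$ (since $b = zc$ and $c,b$ commute), the coefficient $\gamma$ is uniquely determined by the Casimir identity $ef + t^{-1}k/(t-t^{-1})^2 = C$: comparing with the exchange relation $aa^* = 1 + t^{-1}bc$ forces $\gamma = -(t-t^{-1})^2$, and then $e \mapsto za$, $f \mapsto a^*$ makes the two sides match. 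This pins down $\varphi$ on generators; verifying the remaining relations (the $t$-commutations $ke = t^2 ek$, $kf = t^{-2} fk$, and the Casimir identity) is then a direct computation using only the defining relations of $\mathcal{A}_t$.

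To construct $\psi: \mathcal{A}_t \to \mathcal{U}^+_{t,\text{loc}}$, one inverts the prescription on generators: $a \mapsto C^{-1} e$, $a^* \mapsto f$, and $b, c$ must be pre-images of the chosen square root of $-k/(t-t^{-1})^2$. This is exactly the reason the localized oscillator algebra is extended by $k^{\pm 1/2}$: one sets $b \mapsto i k^{1/2}/(t-t^{-1})$ and $c \mapsto i C^{-1} k^{1/2}/(t-t^{-1})$, so that $bc^{-1} \mapsto C$ as required for the central element correspondence. Verifying that $\psi$ respects the $t$-commutations is immediate from $ke = t^2 ek$ and $kf = t^{-2} fk$, while the exchange relation $a^* a = 1 + t\, bc$ follows from the Casimir identity $fe = C - tk/(t-t^{-1})^2$ after multiplying by $C^{-1}$ on the left.

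The main obstacle, modest but unavoidable, is the verification step (iii): checking $\varphi\circ\psi$ and $\psi\circ\varphi$ on generators. Most assignments compose back visibly to the identity (for instance, $f \mapsto a^* \mapsto f$), but the verification $e \mapsto za \mapsto CC^{-1}e = e$ crucially uses that $\varphi(C) = z$ is central, and the verification on $k$ uses that $(ik^{1/2}/(t-t^{-1}))^2 = -k/(t-t^{-1})^2$ and then multiplication by $\varphi$'s scalar $-(t-t^{-1})^2$ recovers $k$. The subtle point throughout is that these identifications are consistent precisely because one is inverting a central element ($C$) and adjoining a square root ($k^{1/2}$), so without the localization and the square-root extension the two algebras would not be isomorphic. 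Once the verifications are done the theorem follows, and as a corollary one obtains the identification $\mathcal{U}^+_{t,\text{loc}}(\widehat{sl_2}) \simeq \mathbb{C}_t[SL_2^{w_0,w_0}]$ by composing with the isomorphism of Proposition \ref{iso alg cluster anello coord sl2}.
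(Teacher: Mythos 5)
Your proposal takes essentially the same approach as the paper: it defines the same map $\varphi$ (with $e\mapsto za$, $f\mapsto a^*$, $k\mapsto -(t-t^{-1})^2 b^2$, $C\mapsto z$), constructs the same inverse $\psi$ (using $b\mapsto ik^{1/2}/(t-t^{-1})$ and $c\mapsto iC^{-1}k^{1/2}/(t-t^{-1})$), verifies the defining relations in each direction, and checks mutual inverseness on generators. The added motivational paragraph explaining how the central element correspondence $C\leftrightarrow z$ and the $t^2$-commutation force the ansatz for $\varphi$ is a useful gloss not spelled out in the paper, but the underlying argument is identical.
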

    \begin{corollary}\label{cor q osc}
        The quantum double Bruhat cell $\CC_t[\SL_2^{w_0,w_0}]$ is isomorphic to the localized quantum oscillator algebra $\mathcal{U}_{t,\loc}^+$.
    \end{corollary}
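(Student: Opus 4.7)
The plan is to derive this corollary directly by composing the two isomorphisms established just above. Indeed, Proposition \ref{iso alg cluster anello coord sl2} provides an isomorphism $\mathcal{A}_t \xrightarrow{\sim} \CC_t[SL_2^{w_0,w_0}]$ defined explicitly on generators by $a\mapsto a$, $b^{\pm 1}\mapsto b^{\pm 1}$, $c^{\pm 1}\mapsto c^{\pm 1}$, $d\mapsto a^*$. In parallel, Theorem \ref{teo q osc is cluster} produces an isomorphism $\varphi\ :\ \mathcal{U}^+_{t,loc} \xrightarrow{\sim} \mathcal{A}_t$ given on generators by $e\mapsto za$, $f\mapsto a^*$, $k^{\pm 1}\mapsto (-(t-t^{-1})^2 b^2)^{\pm 1}$, and $C^{\pm 1}\mapsto z^{\pm 1}$ (with inverse $\psi$ explicitly constructed in the proof of that theorem).

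Thus I would simply define the desired isomorphism as the composition
\[
\mathcal{U}^+_{t,loc} \xrightarrow{\ \varphi\ } \mathcal{A}_t \xrightarrow{\ \sim\ } \CC_t[SL_2^{w_0,w_0}],
\]
both factors being already proven to be isomorphisms of $\CC(t)$-algebras (after the usual scalar extension). The inverse is the composition of the two inverses. Since both factors are isomorphisms, no further verification is needed: compatibility with the defining relations of $\CC_t[SL_2^{w_0,w_0}]$ (the $t$-commutation relations and the quantum determinant identities $ad - t^{-1}bc = 1 = da - t\, bc$) has already been checked in Proposition \ref{iso alg cluster anello coord sl2}, where the determinant relations correspond precisely to the two quantum exchange relations $aa^* = 1 + t^{-1}bc$ and $a^*a = 1 + t\, bc$ for the mutation at the unique exchangeable vertex of the ice quiver $Q$.

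There is no real obstacle here; the content of the corollary is entirely contained in the two preceding results, and the only thing worth emphasizing is the precise correspondence of generators. Explicitly, one may record that under the composite isomorphism one has $e\mapsto Cza^{-1}\cdots$ — more simply, the four generators $a, b, c, d$ of $\CC_t[SL_2^{w_0,w_0}]$ correspond respectively to $C^{-1}e$, $\frac{ik^{1/2}}{t-t^{-1}}$, $\frac{iC^{-1}k^{1/2}}{t-t^{-1}}$ and $f$ in $\mathcal{U}^+_{t,loc}$. This explicit identification, combined with the already-verified bijectivity of $\varphi$ and of the map in Proposition \ref{iso alg cluster anello coord sl2}, completes the proof.
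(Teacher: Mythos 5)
Your proposal is correct and takes essentially the same approach as the paper, which gives no explicit proof for this corollary since it is understood to follow immediately by composing Proposition \ref{iso alg cluster anello coord sl2} with Theorem \ref{teo q osc is cluster}. The generator correspondence you record at the end is accurate (it is exactly the composite through $\psi$); the aborted ``$e\mapsto Cza^{-1}\cdots$'' parenthetical is a harmless false start that you correctly abandon.
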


    \section{Further questions and possible applications}\label{further questions}

    We present here some open questions that arise naturally from our previous results. We plan to investigate more these directions in a forthcoming paper.

    \subsection{Quantum double Bruhat cells}\label{bruhat cells}

    An interesting consequence of the construction made in \cite{ghl} is explained in Section 10 of the same work. Namely, let us call $\gamma_c$ the full subquiver of $\Gamma_c$ whose vertices are all the green and red vertices plus the $n$ vertices immediately above the red vertices (see Section \ref{ghl construction} for the construction of $\Gamma_c$). Then, these last $n$ vertices and the last green vertex of each of the $n$ columns are frozen in $\gamma_c$ and we paint them in blue. Let us call $U\subset V$ the vertex set of $\gamma_c$. See Figure \ref{fig:ice quiver} for examples.

    \begin{figure}[H]
        \centering
       
    \[\begin{tikzcd}
    	&&& {\color{blue}{(1,2)}} \\
    	{\color{blue}{2}} &&&& {\color{blue}{(2,1)}} \\
    	0 &&& {(1,0)} \\
    	{\color{blue}{-2}} &&& {(1,-2)} \\
    	&&&& {(2,-1)} \\
    	&&&& {\color{blue}{(2,-3)}} \\
    	&&& {(1,-4)} \\
    	&&& {\color{blue}{(1,-6)}}
    	\arrow[from=2-5, to=3-4]
    	\arrow[from=3-1, to=2-1]
    	\arrow[from=3-1, to=4-1]
    	\arrow[from=3-4, to=1-4]
    	\arrow[from=3-4, to=4-4]
    	\arrow[from=4-4, to=5-5]
    	\arrow[from=5-5, to=2-5]
    	\arrow[from=5-5, to=6-5]
    	\arrow[from=6-5, to=7-4]
    	\arrow[from=7-4, to=4-4]
    	\arrow[from=7-4, to=8-4]
    \end{tikzcd}\]
        \caption{The quivers $\gamma_c$ in type $A_1$ and $A_2$.}
        \label{fig:ice quiver}
    \end{figure}
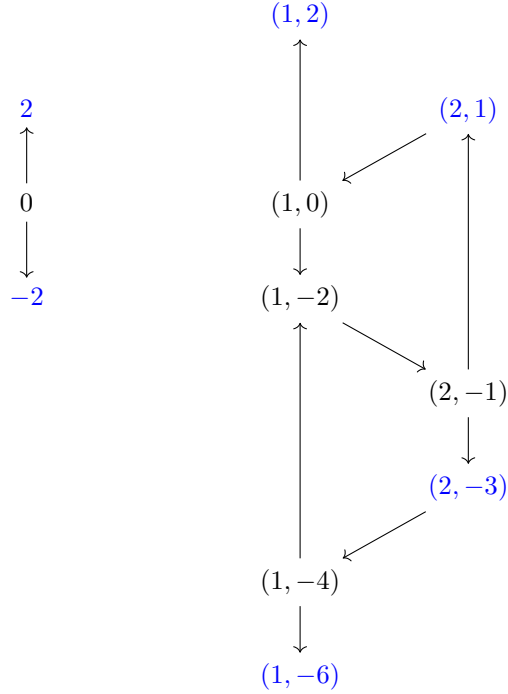
    Let $G$ be a simply-connected complex algebraic group with Lie algebra $\g$. Then, for any two elements of the Weyl group $u,v\in W$, Berenstein, Fomin and Zelevinsky introduced in \cite{bfz3} the double Bruhat cell $G^{u,v}$. In that paper it is shown that the coordinate ring $\CC[G^{u,v}]$ has the structure of cluster algebra, where the cluster variables correspond to some generalized minors. In particular, one can consider the double Bruhat cell associated with the longest element in the Weyl group, $G^{w_0,w_0}$. If we call $\mathcal{B}_c$ the cluster subalgebra of $\mathcal{A}_{w_0}$ with initial seed given by the quiver $\gamma_c$, then $\CC\otimes\mathcal{B}_c$ is isomorphic to the cluster algebra $\CC[G^{w_0,w_0}]$, as stated in \cite[Proposition 10.3]{ghl}. It should be noted that the initial seed used in \cite{bfz3} differs from the one associated with $\gamma_c$, but  they are related by mutation. Yakimov and Goodearl proved in \cite{yg} a conjecture stated by Berenstein and Zelevinsky in \cite{bz05}, that is 
    \begin{theorem}[{\cite[Theorem 9.5]{yg}}]\label{teo yg}
        For all connected, simply-connected, complex, simple, algebraic group $G$ and for all elements $u,v$ in the Weyl group $G$, the quantized coordinate ring $\CC_t[G^{u,v}]$ of the double Bruhat cell $G^{u,v}$ has a quantum cluster algebra structure.  
    \end{theorem}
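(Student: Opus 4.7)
The plan is to realize $\CC_t[G^{u,v}]$ as a symmetric CGL (Cauchon--Goodearl--Letzter) extension and then invoke the general theorem that any such algebra carries a canonical quantum cluster algebra structure. This is the strategy developed by Goodearl--Yakimov in their program on quantum nilpotent algebras, and I would follow its main outline.

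The first step is to fix a reduced double word $\boldsymbol{i}=(i_1,\dots,i_{\ell(u)+\ell(v)})$ for the pair $(u,v)$ and, using the Levendorski--Soibelman presentation of $\mathcal{U}_t(\g)$, build an iterated Ore extension presentation
\[R \;=\; K[X_1][X_2;\sigma_2,\delta_2]\cdots [X_N;\sigma_N,\delta_N]\]
of $\CC_t[G^{u,v}]$ (after a harmless localization), in which the $\sigma_k$ are the rational torus actions attached to $\boldsymbol{i}$ and the derivations $\delta_k$ are locally nilpotent and $t$-skew. The crucial feature is \emph{symmetry}: the same CGL structure has to be visible with the opposite order of the variables, which follows from considering the reversed reduced word together with standard braid identities for Lusztig's automorphisms $T_w$.

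Next, I would identify the candidate initial cluster variables with the quantum generalized minors $\Delta_{u_{\leq k}\omega_{i_k},\,v_{\leq k}\omega_{i_k}}$ attached to prefixes of $\boldsymbol{i}$; these are precisely the Cauchon homogeneous prime elements of $R$, extracted via the deletion-derivation procedure. The exchange matrix $\widetilde{B}$ is the one produced combinatorially from $\boldsymbol{i}$ in the classical Berenstein--Fomin--Zelevinsky construction for the double Bruhat cell, while the quantization matrix $\Lambda$ has to be read off from the $t$-commutation relations between the minors, which in turn can be expressed through the Rosso--Tanisaki bilinear form on the weight lattice.

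The hard part will be to prove simultaneously the compatibility of the pair $(\Lambda,\widetilde{B})$ in the sense of Definition \ref{def compatibile}, and the quantum exchange relations themselves, i.e.\ that mutating a quantum minor at an exchangeable vertex produces again an element of $R$ expressible as a quantum minor. Both hinge on a family of quantum Pl\"ucker-type identities of the shape $\Delta\,\Delta' - t^{\star} \Delta'\,\Delta = \sum t^{\star}\, \Delta''\,\Delta'''$, and the natural attack is by induction on $\ell(u)+\ell(v)$, removing the last letter of $\boldsymbol{i}$ and transporting the identity across one Lusztig automorphism $T_{i_N}$. Once these identities are established, the general Goodearl--Yakimov theorem for symmetric CGL extensions will guarantee that the quantum cluster algebra generated by this initial seed coincides with all of $R$, hence (after restoring the inverted frozens) with $\CC_t[G^{u,v}]$.
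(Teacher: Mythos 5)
The paper does not prove this result: the theorem is quoted verbatim from Yakimov--Goodearl, so there is no in-paper argument to compare your proposal against. Taken as a reconstruction of the Goodearl--Yakimov strategy, your sketch is broadly on the right track at the level of keywords: double Bruhat cells are indeed handled through the theory of symmetric CGL extensions (``quantum nilpotent algebras''), the Cauchon homogeneous prime elements do give the initial quantum cluster, and the exchange matrix does come from the Berenstein--Fomin--Zelevinsky combinatorics.

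Two points in your plan are, however, genuinely off. First, $\CC_t[G^{u,v}]$ is not itself a CGL extension --- it is a \emph{localization} of one. The algebra to which the CGL machinery applies is the subalgebra generated by the relevant quantum minors, before inverting the frozen ones; passing back to the full quantized coordinate ring of $G^{u,v}$ is done via Joseph's localization theorem. Your phrase ``after a harmless localization'' points in the wrong direction and glosses over the step that actually requires care. Second, and more substantially, you propose to establish the compatibility of $(\Lambda,\widetilde B)$ and the quantum exchange relations by hand, via quantum Pl\"ucker identities proved by induction on $\ell(u)+\ell(v)$, and only \emph{then} invoke the general Goodearl--Yakimov theorem. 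This has the logical order reversed and double-counts the work: the general theorem on symmetric quantum nilpotent algebras delivers the cluster variables, the compatible pair in the sense of Definition~\ref{def compatibile}, the exchange relations, and the equality of the resulting quantum cluster algebra with the whole algebra all as \emph{outputs}, once the CGL axioms (rational torus action, locally nilpotent $t$-skew derivations, symmetry of the chain of Ore extensions, and the leading-term conditions on the prime elements) have been verified. The genuine content of the application is checking those axioms for the double Bruhat cell, not re-deriving the Pl\"ucker identities. Trying to prove the exchange relations directly from quantum minor identities is exactly the route Berenstein--Zelevinsky tried in \cite{bz05}; they could only carry it out in special cases, which is why the CGL machinery was introduced in the first place.
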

    We refer the reader to Section 8 in \cite{bz05} for the recipe to obtain an initial quantum seed for $\CC_t[G^{w_0,w_0}.]$
    \begin{remark}
        Proposition \ref{iso alg cluster anello coord sl2} is just an instance of Theorem \ref{teo yg}. In fact, for our case the isomorphism between the quantum double Bruhat cell and the quantum cluster algebra was already proved in \cite[Section 8]{bz05}. 
    \end{remark}

    Since $\mathcal{B}_c$ is a subalgebra of $\mathcal{A}_{w_0}$ and we have just constructed the quantum cluster algebra $\mathcal{A}_{t,w_0}$, we get a quantum cluster algebra structure on $\mathcal{B}_c$ that we denote $\mathcal{B}_{t,c}$. The quantization matrix compatible with the seed associated with $\gamma_c$ is nothing but the submatrix extracted from $\Lambda_c$ with entries in $U\times U$ and we denote it by $\bar{\Lambda}_c$. Thus, it is a natural question to ask if the quantum cluster algebra $\CC_t[G^{w_0,w_0}]$ by Fomin--Zelevinsky and Yakimov--Goodearl is the same as $\mathcal{B}_{t,c}$. For type $A_1$ and $A_2$, where we have explicit computations, the answer is yes and we propose here these examples. 

    First, we set some notations. The order on the vertex set $U$ of the quiver $\gamma_c$ is the one induced by the order on $V$ (see \ref{total order}); the same goes for the columns and rows of the quantization matrices. Moreover, we denote by $Q$ the quiver for the initial seed of $\CC[G^{w_0,w_0}]$ as prescribed in \cite{bz05}; we call $\Lambda_{BZ}$ the quantization matrix associated with such seed  (see \cite[Sec. 8]{bz05}) and $\Lambda_0$ the matrix obtained from $\Lambda_{BZ}$ by matrix mutation (as defined in \cite[Eq.(3.4)]{bz05}), where the mutation sequence is the one relating $Q$ and $\gamma_c$.
 
    \begin{example}
        Let us consider $G=\SL_2(\CC)$, so $\g=\sl_2$. The quiver $\gamma_c$ is represented on the left in Figure \ref{fig:ice quiver}.
        Using Examples \ref{esempio A1 1} and \ref{esempio A1 2}, we have that 
        \begin{equation*}
            \bar{\Lambda}_c:= \begin{pmatrix}
                0 & 1 & 0\\
                -1 & 0  & -1\\
                0 & 1 & 0
            \end{pmatrix}
        \end{equation*} 
        On the other hand, we have that the quiver $Q$ is just the mutation at vertex $0$ of $\gamma_c$, see Figure \ref{fig:Q A1}.
       
    \begin{figure}[H]
        \centering
        \[\begin{tikzcd}
    	{\textcolor{blue}{2}} \\
    	0 \\
    	{\textcolor{blue}{-2}}
    	\arrow[from=1-1, to=2-1]
    	\arrow[from=3-1, to=2-1]
    \end{tikzcd}\]
        \caption{The quiver $Q$ in type $A_1$}
        \label{fig:Q A1}
    \end{figure}
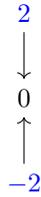
        
        Following \cite[Section 8]{bz05}, we see that the quantization matrix compatible with the seed of $Q$ is
        \begin{equation*}
            \Lambda_{BZ}=
            \begin{pmatrix}
                0 &  -1 & 0\\
                1 & 0 & 1\\
                0 & -1 & 0
            \end{pmatrix}
        \end{equation*}
       Now, applying matrix mutation to $\Lambda_{BZ}$ we obtain $\Lambda_0$ and we see that $\bar{\Lambda}_c=\Lambda_0$.
    \end{example}

    \begin{example}
        We consider $G=\SL_3(\CC)$, so $\g=\sl_3$. In this case the quiver $Q$ is shown in Figure \ref{fig:quiver bz}.
    
           \begin{figure}[H]
             
    \[\begin{tikzcd}
    	& {\textcolor{blue}{(2,1)}} \\
    	{\textcolor{blue}{(1,2)}} \\
    	{(1,0)} \\
    	& {(2,-1)} \\
    	{(1,-2)} \\
    	{(1,-4)} \\
    	& {\textcolor{blue}{(2,-3)}} \\
    	{\textcolor{blue}{(1,-6)}}
    	\arrow[from=1-2, to=3-1]
    	\arrow[from=3-1, to=2-1]
    	\arrow[from=3-1, to=4-2]
    	\arrow[from=4-2, to=1-2]
    	\arrow[from=4-2, to=5-1]
    	\arrow[from=4-2, to=7-2]
    	\arrow[from=5-1, to=3-1]
    	\arrow[from=5-1, to=6-1]
    	\arrow[from=6-1, to=4-2]
    	\arrow[from=6-1, to=8-1]
    	\arrow[from=7-2, to=6-1]
    \end{tikzcd}\]
    
               \caption{The quiver $Q$ in type $A_2$.}
               \label{fig:quiver bz}
           \end{figure}
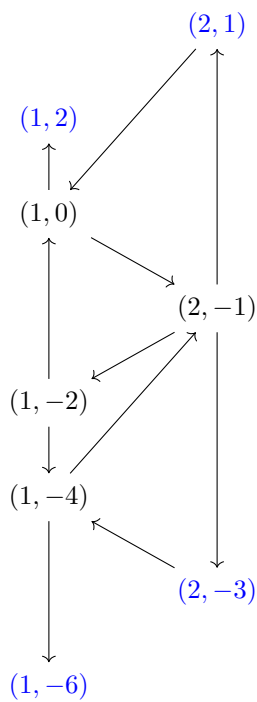
           For such initial seed, the quantization matrix $\Lambda_{\textnormal{BZ}}$ is the following
    
    \begin{equation*}\Lambda_{BZ}=
        \begin{pmatrix}
        0 & -1 & 0 & -1 & -1 & 0 & 0 & 0\\
        1 & 0 & 0 & -1 & 0 & 0 & 1 & 0\\
        0 & 0 & 0 & -1 & -1 & -1 & 0 & 0\\
        1 & 1 & 1 & 0 & 0 & 1 & 1 & 1\\
        1 & 0 & 1 & 0 & 0 & 0 &1 & 1\\
        0 & 0 & 1 & -1 & 0 & 0 & 0 & 1\\
        0 & -1 & 0 & -1 & -1 & 0 & 0 & 0\\
        0 & 0 & 0 & -1 & -1 & -1 & 0 & 0
    \end{pmatrix}
    \end{equation*}
           After mutating $Q$ at vertex $(1,-2)$ we obtain the quiver $\gamma_c$ (on the right in Figure \ref{fig:ice quiver}) and the mutation on $\Lambda_{\textnormal{BZ}}$ produces $\Lambda_0$. The matrix $\bar{\Lambda}_c$ can be obtained from $\Lambda_c$ in Example \ref{esempio A2}. We can observe that $\Lambda_0=-\bar{\Lambda}_c$.
    \end{example}

    These examples motivate the following
    \begin{conjecture}
        Let $G$ be a simply-connected, simple, complex algebraic group with Lie algebra $\g$ of simply-laced type. 
        The quantum cluster algebra structure induced by $\mathcal{A}_{t,w_0}$ on the coordinate ring of the double Bruhat cell $G^{w_0,w_0}$ is the same, up to rescaling, as the one in \cite{bz05} and \cite{yg} on $\CC_t[G^{^{w_0,w_0}}]$. More precisely, using the notation from the previous examples, for every $G$ as in the hypothesis, there exists an integer $k$ such that $\Lambda_0=k\bar{\Lambda}_c$.
    \end{conjecture}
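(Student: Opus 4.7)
The plan is to reduce the conjecture to a uniqueness statement for quantization matrices compatible with the exchange matrix $B_{\gamma_c}$ of the ice quiver $\gamma_c$. Both $\bar{\Lambda}_c$ and $\Lambda_0$ are skew-symmetric $|U| \times |U|$ matrices, and I would first verify that both form compatible pairs with $B_{\gamma_c}$. For $\bar{\Lambda}_c$, this is a restriction of Proposition \ref{quantum cluster ade}: since every mutable vertex of $\gamma_c$ has all of its $\Gamma_c$-neighbors inside $U$, the identity $B_c^T \Lambda_c = -2\,\mathrm{Id}_{V \times V}$ descends to $B_{\gamma_c}^T \bar{\Lambda}_c = -2\,\mathrm{Id}_{\mathrm{ex}}$ on the mutable block, with a zero block on the frozen columns. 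For $\Lambda_0$, compatibility follows from the BZ construction \cite{bz05} together with the invariance of compatibility under matrix mutation (\cite[Prop.~3.4]{bz05}).

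Next, I would compute the diagonal data on the BZ side explicitly, namely the integer $d_0$ such that $B_{\gamma_c}^T \Lambda_0 = d_0 \cdot \mathrm{Id}_{\mathrm{ex}}$, by pushing the initial diagonal data of $(\Lambda_{BZ}, B_Q)$ through the finite mutation sequence from $Q$ to $\gamma_c$; matrix mutation preserves the diagonal data block, so this reduces to a finite combinatorial calculation depending only on the Dynkin type. The candidate scaling factor is then $k = -d_0/2$, and the verifications in types $A_1$ and $A_2$ suggest that $k$ is a small universal integer. To support universality, I would perform an entry-wise check in types $A_3$ and $D_4$ using the explicit formula $\bar{\Lambda}_c = \bigl((G^{(\infty)})^T \Lambda_e G^{(\infty)}\bigr)|_{U \times U}$ from Definition \ref{definizione lambda c}, compared against the BZ bilinear form on the initial seed of $\CC_t[G^{w_0,w_0}]$.

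The main obstacle is that compatible quantization matrices are \emph{not} uniquely determined by the exchange matrix together with the diagonal data: any two such matrices can differ on the frozen-frozen block, whose contribution lies in $\ker(B_{\gamma_c}^T)$. Therefore establishing $\Lambda_0 = k \, \bar{\Lambda}_c$ requires matching the quantizations of frozen variables on the two sides, which on our side are the highest red vertex column together with the $n$ vertices above the red vertices (see Figure \ref{fig:ice quiver}), and on the BZ side are specific generalized minors of $G^{w_0,w_0}$ associated to the chosen reduced word for $w_0$. I would attack this by using the braid-group/leading-term description of Lemma \ref{T gives leading term} and Remark \ref{confronto tra operatori chari} to identify each frozen variable $Q_v$, $v \in U$, with an explicit generalized minor on $G^{w_0,w_0}$, and then compare the Berenstein-Zelevinsky bilinear form on pairs of such minors with the restriction of $\Lambda_e$ transported by $G^{(\infty)}$. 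Carrying out this identification uniformly in the simply-laced type, and showing that the mismatch on the frozen-frozen block vanishes after the rescaling by $k$, is the delicate step; once it is achieved, the proportionality $\Lambda_0 = k \, \bar{\Lambda}_c$ follows, and one obtains an isomorphism of quantum cluster algebras $\mathcal{B}_{t,c} \simeq \CC_t[G^{w_0,w_0}]$ after a substitution $t \mapsto t^{1/k}$ in the quantum parameter.
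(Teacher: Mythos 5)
This statement is a conjecture, not a theorem; the paper does not supply a proof. It only verifies the claim by explicit matrix computations in types $A_1$ (where $\bar{\Lambda}_c = \Lambda_0$, so $k=1$) and $A_2$ (where $\Lambda_0 = -\bar{\Lambda}_c$, so $k=-1$), and leaves the general case open. So there is nothing in the paper to compare your argument against; what you have written is a research plan rather than a proof, and you yourself flag the frozen-frozen block as the unresolved ``delicate step.''

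Within the plan, two points need correction or more care. First, you say the checks in $A_1$ and $A_2$ ``suggest that $k$ is a small universal integer,'' but the paper's own computations give $k=1$ and $k=-1$ respectively, so $k$ is \emph{not} universal even across these two examples (the conjecture only asserts that for each $G$ there exists some integer $k$, allowed to depend on $G$). Second, your reduction of $B_c^T\Lambda_c=-2\,\mathrm{Id}$ to $B_{\gamma_c}^T\bar{\Lambda}_c=-2\,\mathrm{Id}_{\mathrm{ex}}$ rests on the claim that every mutable vertex of $\gamma_c$ has all of its $\Gamma_c$-neighbors inside $U$; this is plausible and checks out in small examples, but it is a genuine structural assertion about $\Gamma_c$ that would need to be proved (or cited) before the restriction argument is valid. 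That said, you do correctly identify the real obstruction: compatibility together with the diagonal data determines the quantization matrix only modulo the frozen-frozen block, so matching $\Lambda_0$ and $\bar{\Lambda}_c$ up to scalar requires an additional identification of the quantized frozen variables on both sides. Your proposal to do this by matching the leading-term/braid-group description of the $Q$-variables with Berenstein--Zelevinsky generalized minors is the right direction and is consistent with what the paper gestures at (cf.\ Lemma \ref{T gives leading term} and Remark \ref{confronto tra operatori chari}), but it has not been carried out in the paper either.
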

    This conjecture leads us to another problem.
    \subsection{Uniqueness of the quantization}

    After the author had realized the quantum Grothendieck ring $ K_t(\mathcal{O}^{\sh}_{\ZZ})$ as a quantum cluster algebra, Qin provided in \cite{qin24} a quantization of the cluster algebra $\mathcal{A}_{w_0}$ using chains of ``good'' subseeds. We explain here briefly how it works. In Qin's notation, $\ddot{\boldsymbol{t}}_0$ is the initial seed for the coordinate ring of the double Bruhat cell $G^{w_0,w_0}$ \cite{bfz3} with quiver $\gamma_c$. Then, using the combinatorics of Coxeter words, it is possible to construct a chain of seeds $(\ddot{\boldsymbol{t}}_i)_{i\in\NN}$ (see Figures \ref{fig:catena subseed t0} and \ref{fig:catena subseed t1}) such that the colimit $\ddot{\boldsymbol{t}}_{\infty}$ coincides with the initial seed $\Sigma_c$ for the cluster algebra $\mathcal{A}_{w_0}$ with quiver $\Gamma_c$, see \cite[Proposition 7.1]{qin24}. Now, by the fore-mentioned results in \cite{bz05}, there exists a quantization matrix $\Lambda^{(0)}$ for $\ddot{\boldsymbol{t}}_0$ and \cite[Lemma 4.7]{qin24} and \cite[Lemma 7.3]{qin24} assure that for each intermediate subseed $\ddot{\boldsymbol{t}}_s$ there is a unique matrix $\Lambda^{(s)}$ compatible with $\ddot{\boldsymbol{t}}_s$ that extends $\Lambda^{(0)}$. The following result is particularly significant for us:
    \begin{theorem}[{\cite[Proposition 7.4]{qin24}}]
        There is a unique extension of the matrix $\Lambda^{(0)}$ for the seed $\ddot{\boldsymbol{t}}_0$ to a quantization matrix $\Lambda^{(\infty)}$ for the seed $\ddot{\boldsymbol{t}}_{\infty}$.
    \end{theorem}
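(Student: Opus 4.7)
The plan is to build $\Lambda^{(\infty)}$ as the colimit of the matrices $\Lambda^{(s)}$ furnished by \cite[Lemma 4.7]{qin24} and \cite[Lemma 7.3]{qin24}, and to deduce uniqueness from the fact that every entry of a putative extension is already pinned down at some finite stage of the chain $(\ddot{\boldsymbol{t}}_s)_{s\in\NN}$. Concretely, I would first fix indices $v,w$ of the colimit seed $\ddot{\boldsymbol{t}}_{\infty}=\Sigma_c$. Since the chain exhausts the vertex set, there is some $s_0$ such that both $v$ and $w$ belong to $\ddot{\boldsymbol{t}}_s$ for all $s\ge s_0$. I would then define
\[
\Lambda^{(\infty)}_{v,w}:=\Lambda^{(s_0)}_{v,w},
\]
and check that this value is independent of the choice of $s_0$. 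This independence is exactly the compatibility statement that for $s\ge s_0$ the matrix $\Lambda^{(s)}$ restricts on the $\ddot{\boldsymbol{t}}_{s_0}$-indexing block to $\Lambda^{(s_0)}$; this in turn is a direct consequence of the uniqueness of the extension at each finite step, applied to the restriction of $\Lambda^{(s)}$.

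Next I would verify that the matrix $\Lambda^{(\infty)}$ just defined is indeed a quantization matrix compatible with $\ddot{\boldsymbol{t}}_{\infty}$. Skew-symmetry is immediate from the skew-symmetry of each $\Lambda^{(s)}$. For compatibility, pick any mutable vertex $v$ of $\ddot{\boldsymbol{t}}_{\infty}$. Because the exchange matrix $B_c$ of $\Gamma_c$ is locally finite (each column has finitely many nonzero entries), the support of the $v$-th column of $B_c$ is contained in $\ddot{\boldsymbol{t}}_{s}$ for $s$ large enough. For such $s$, the identity
\[
\bigl(B_c^T\Lambda^{(\infty)}\bigr)_{v,w}=\bigl(B^{(s)\,T}\Lambda^{(s)}\bigr)_{v,w}
\]
holds for every $w\in\ddot{\boldsymbol{t}}_s$, and by compatibility at stage $s$ the right-hand side is $d_v\delta_{v,w}$ with a common sign of the $d_v$'s. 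The same argument applies for $w\notin\ddot{\boldsymbol{t}}_s$ after increasing $s$, so compatibility at the colimit follows.

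Finally, uniqueness: if $\Lambda'$ is another quantization matrix on $\ddot{\boldsymbol{t}}_{\infty}$ extending $\Lambda^{(0)}$, then its restriction to each $\ddot{\boldsymbol{t}}_s$ is still compatible with the corresponding subseed (the relevant columns of $B_c^T\Lambda'$ are finite sums that coincide with the finite-rank computation) and still extends $\Lambda^{(0)}$. By the uniqueness part of \cite[Lemma 4.7]{qin24} and \cite[Lemma 7.3]{qin24}, this restriction must equal $\Lambda^{(s)}$, so $\Lambda'_{v,w}=\Lambda^{(s)}_{v,w}=\Lambda^{(\infty)}_{v,w}$ for all $v,w$, giving $\Lambda'=\Lambda^{(\infty)}$.

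The main technical obstacle I anticipate is the second step, namely making sure that compatibility really localizes to finite subseeds in the infinite-rank setting: one must check that the relevant matrix products are well defined (no infinite sums) and that the restriction of a compatible pair to a subseed stays compatible in the sense of Definition \ref{def compatibile}. This is where local finiteness of $B_c$ and $\Gamma_c$, already exploited in Proposition \ref{formula g-matrix infinita}, plays the decisive role; once that is in place, the rest of the argument is a clean colimit argument built on the finite-stage uniqueness lemmas.
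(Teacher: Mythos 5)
The paper does not give its own proof of this statement: it is cited verbatim from \cite[Proposition 7.4]{qin24}, and the surrounding text only sketches the setup (the chain of seeds $\ddot{\boldsymbol{t}}_s$, the identification of the colimit $\ddot{\boldsymbol{t}}_{\infty}$ with $\Sigma_c$ from \cite[Proposition 7.1]{qin24}, and the finite-stage uniqueness of $\Lambda^{(s)}$ from \cite[Lemma 4.7, Lemma 7.3]{qin24}). Your colimit argument is exactly the natural way to pass from those finite-stage lemmas to the theorem, so at the structural level it matches the argument the paper alludes to.

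One place where your write-up is slightly under-justified: both the well-definedness step and the uniqueness step rely on the claim that restricting a compatible pair $(\Lambda^{(s)}, B^{(s)})$ to the indexing block of an earlier $\ddot{\boldsymbol{t}}_{s_0}$ is again compatible with $\ddot{\boldsymbol{t}}_{s_0}$. This is not a consequence of local finiteness of $B_c$ alone; local finiteness guarantees that individual columns of $B_c$ have finite support, but it does not by itself force the quiver neighbours of a mutable vertex of $\ddot{\boldsymbol{t}}_{s_0}$ to lie inside $\ddot{\boldsymbol{t}}_{s_0}$. What is really needed is the defining property of Qin's chain of "good" subseeds, which arranges exactly this (visible in Figure~\ref{fig:catena subseed t0}: the inner mutable vertex of $\ddot{\boldsymbol{t}}_0$ has both its neighbours among the frozen endpoints). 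You do flag this as the technical crux, but you attribute it to local finiteness of $\Gamma_c$ when it is really a structural feature of the particular chain $(\ddot{\boldsymbol{t}}_s)$ furnished by \cite[Lemma 4.7]{qin24}. With that property imported from Qin, the rest of your colimit argument — well-definedness of $\Lambda^{(\infty)}_{v,w}$, skew-symmetry, compatibility at the colimit, and uniqueness by restriction — goes through as written.
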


    \begin{figure}[H]
        \centering
        \begin{tikzcd}
    	{\stackrel{r-2}{\square}} && {\stackrel{r}{\bullet}} && {\stackrel{r+2}{\square}}
    	\arrow[from=1-3, to=1-1]
    	\arrow[from=1-3, to=1-5]
        \end{tikzcd}
        \caption{The subseed $\ddot{\boldsymbol{t}}_{0}$ in type $A_1$}
        \label{fig:catena subseed t0}
    \end{figure}
    
    \begin{figure}[H]
        \centering
        \begin{tikzcd}
    	{\stackrel{r-4}{\square}} && {\stackrel{r-2}{\bullet}} && {\stackrel{r}{\bullet}} && {\stackrel{r+2}{\bullet}} && {\stackrel{r+4}{\square}}
    	\arrow[from=1-1, to=1-3]
    	\arrow[from=1-5, to=1-3]
    	\arrow[from=1-5, to=1-7]
    	\arrow[from=1-7, to=1-9]
    \end{tikzcd}
        \caption{The subseed $\ddot{\boldsymbol{t}}_{1}$ in type $A_1$.}
        \label{fig:catena subseed t1}
    \end{figure}

    Thus, a natural question comes to our mind: is there a relation between $\Lambda^{(\infty)}$ and our $\Lambda_c$? Are they the same? Combining with our results, this would imply that the quantum cluster algebra of \cite{qin24} contains $K_t(\mathcal{O}^{\mathfrak{b},+}_{\ZZ})$ of \cite{b21}, a fact that is clear for our construction of the quantization (by Theorem \ref{inclusione anelli}). 
    Or, more in general: does the cluster algebra $\mathcal{A}_{w_0}$ admit a unique quantization?\\[12 pt] In fact, since our definition of the quantization matrix $\Lambda_c$ relies on the matrix $\Lambda_e$ defined by Bittmann in \cite{b21}, we change our question to: is there a unique possible compatible quantization matrix attached to the basic infinite quiver $\Gamma_e$? If the answer is yes, then the uniqueness of $\Lambda_c$ follows immediately by its definition.
    Let us recall the general notion of compatible pair given in Definition \ref{def compatibile}. This definition holds also for infinite rank cluster algebras and we have used it all along this paper. Let us consider from now on only the case without frozen variables, that is, the exchange matrix is squared, skew-symmetric and consists only of its principal part $B$. As in the rest of the paper, the quantization matrix is denoted by $\Lambda$. Note that in the infinite rank case, the matrices $B$ and $\Lambda$ have rows and columns indexed by the vertex set of the quiver associated with the same seed, hence it is a discrete infinite set. In addition, it should be noted that a priori the product of two infinite matrices might not be well defined. In the finite rank case, it follows from the definition of compatible pair that if a quantization exists, then the matrix $B$ has full rank. When, in addition, it is square, $B$ is invertible. In particular, $B^{-1}$ is still skew-symmetric and we have $B^{-1}=\det(B)^{-1}\adj(B)$, where the adjugate is an integer matrix. Consequently, $\adj(B)$ satisfies the compatibility condition. Moreover, it is the unique matrix, up to multiplication by a constant, satisfying 
    \[B^T\Lambda=k\Id.\] So, we can say that in this case the quantization is unique. 

    On the other hand, when the cluster algebra is of infinite rank type, we cannot conclude so easily. This is due to the lack of associativity of the product of infinite matrices, which is the key ingredient to prove the uniqueness of an inverse. However we believe that imposing some further conditions on the quantization matrix, we could obtain that there is only a one-parameter family of matrices that is admissible. We will come back to this problem in a future work.

    \subsection{\texorpdfstring{$(q,t)$}{(q,t)}-characters}
    
    One of the original motivations to study quantum Grothendieck rings is the theory of 
    $(q,t)$-characters by Nakajima \cite{nak04}, which leads to a Kazhdan-Lusztig type algorithm to compute 
    $q$-characters of simple finite-dimensional representations of quantum affine 
    algebras of simply-laced types (the result was recently extended to reachable modules 
    of non simply-laced quantum affine algebras using cluster theoretical methods in \cite{fhoo}). 
    Although $q$-characters have been defined for representation in $\mathcal{O}^{\sh}$, the 
    corresponding $(q,t)$-characters have not been introduced yet, as far as the author knows. 
    Following \cite[Remark 6.14]{b21}, we expect that our cluster theoretic approach 
    to quantum Grothendieck ring of $\mathcal{O}^{\sh}$ will lead to the construction of 
    $t$-deformations of $q$-characters in this context, presumably in terms of $F$-polynomials
     in the quantum cluster algebra. This will be discussed in a forthcoming paper.
    It would also be interesting to compare our quantization with possible deformations of $K_0(\mathcal{O}^{\sh}_{\mathbb{Z}})$
    that could be obtained from the geometry of Coulomb branches, which are known to be closely related to shifted 
    quantum affine algebras and their representations, see for example \cite{vv25}.

    \printbibliography[heading=bibintoc]

@article {qin17,
    AUTHOR = {Qin, Fan},
     TITLE = {Triangular bases in quantum cluster algebras and monoidal categorification conjectures},
   JOURNAL = {Duke Mathematical Journal},
    VOLUME = {166},
      YEAR = {2017},
    NUMBER = {12},
     PAGES = {2337–2442}
}

@article {nak04,
    AUTHOR = {Nakajima, Hiraku},
     TITLE = {Quiver varieties and {$t$}-analogs of {$q$}-characters of
              quantum affine algebras},
   JOURNAL = {Annals of Mathematics (2)},
    VOLUME = {160},
      YEAR = {2004},
    NUMBER = {3},
     PAGES = {1057--1097}
}

@article{kkop,
author={Kashiwara, Masaki and Kim, Myungho and Oh, Se-Jin and Park, Euiyong},
title={Monoidal categorification and quantum affine algebras II},
year={2024},
journal={Inventiones Mathematicae},
volume={236},
pages={837-924}}

@article{vv,
author={Varagnolo, Michela and Vasserot, Eric},
title={Perverse sheaves and quantum Grothendieck
rings},
year={2001},
journal={Studies in memory of Issai Schur (Chevaleret/Rehovot, 2000)},
volume={210},
pages={345-365}}

@article{sqaahernandez,
    author = {Hernandez, David},
    title = "{Representations of Shifted Quantum Affine Algebras}",
    journal = {International Mathematics Research Notices},
    volume = {2023},
    number = {13},
    pages = {11035-11126},
    year = {2022}}

@misc{fhoo, 
    author = {Fujita, Ryo and Hernandez, David and Oh, Se-Jin and Oya, Hironori},
title = {Isomorphisms among quantum Grothendieck rings and cluster algebras},
eprint= {2304.02562},archivePrefix={arXiv}, year={2023}}

@article{ghl,
author = {Geiss, Christof and Hernandez, David and Leclerc, Bernard},
title = {Representations of shifted quantum affine algebras and cluster algebras I: The simply laced case},
journal = {Proceedings of the London Mathematical Society},
volume = {129},
number = {3},
pages = {e12630},
year = {2024}
}

@article{bz05,
author = {Berenstein, Arkady and Zelevinsky, Andrei},
year = {2005},
pages = {405-455},
title = {Quantum cluster algebras},
volume = {195},
journal = {Advances in Mathematics}
}

@article{ft,
    author = {Finkelberg, Michael and Tsymbaliuk, Alexander},
    title = "{Multiplicative slices, relativistic Toda and shifted quantum affine algebras}",
    journal = "Progress in Mathematics",
    volume = "330",
    pages = "133--304",
    year = "2019"
}

@article{blz,
author={Bazhanov, Vladimir V. and Lukyanov, Sergei L. and Zamolodchikov, Alexander B.},
title ="Integrable Structure of Conformal Field Theory III. The Yang–Baxter Relation", journal = "Communications in Mathematical Physics", volume="200", pages= "297-324", year= "1999"}

@article{fh23,
    author = "Frenkel, Edward and Hernandez, David",
    title = "{Extended Baxter Relations and QQ-Systems for Quantum Affine Algebras}",
    journal = "Commununications in Mathematical Physics",
    volume = "405",
    number = "8",
    pages = "190",
    year = "2024"
}

@article{b21,
author = {Bittmann, L{\'e}a},
title = {Quantum Grothendieck rings as quantum cluster algebras},
journal = {Journal of the London Mathematical Society},
volume = {103},
number = {1},
pages = {161-197},
year = {2021}
}

@article{hj12, title={Asymptotic representations and Drinfeld rational fractions}, volume={148}, number={5}, journal={Compositio Mathematica}, author={Hernandez, David and Jimbo, Michio}, year={2012}, pages={1593–1623}}

@article{hl16o,
author = {Hernandez, David and Leclerc, Bernard},
title = {{Cluster algebras and category $\mathscr{O}$ for representations of Borel subalgebras of quantum affine algebras}},
volume = {10},
journal = {Algebra \& Number Theory},
number = {9},
pages = {2015 -- 2052},
year = {2016},}

@article{fz4, title={Cluster algebras IV: Coefficients}, volume={143},  number={1}, journal={Compositio Mathematica}, author={Fomin, Sergey and Zelevinsky, Andrei}, year={2007}, pages={112–164}}

@article{palu, title={Grothendieck group and generalized mutation rule for 2-Calabi–Yau triangulated categories},volume={213}, number={7}, journal={Journal of Pure and Applied Algebra},author={Palu, Yann}, year={2009},pages={1438-1449}}

@misc{qin24,
      title={Based cluster algebras of infinite ranks}, 
      author={Qin, Fan},
      year={2024},
      eprint={2409.02881},
      archivePrefix={arXiv},
      primaryClass={math.QA}, 
}

@incollection {fr,
    AUTHOR = {Frenkel, Edward and Reshetikhin, Nicolai},
     TITLE = {The {$q$}-characters of representations of quantum affine
              algebras and deformations of W-algebras},
 BOOKTITLE = {Recent developments in quantum affine algebras and related
              topics},
    SERIES = {Contemp. Math.},
    VOLUME = {248},
     PAGES = {163--205},
 PUBLISHER = {Amer. Math. Soc., Providence, RI},
      YEAR = {1999},
}

@article{hl15,
  TITLE = {{Quantum Grothendieck rings and derived Hall algebras}},
  AUTHOR = {Hernandez, David and Leclerc, Bernard},
  JOURNAL = {{Journal f{\"u}r die reine und angewandte Mathematik}},
  VOLUME = {2015},
  NUMBER = {701},
  PAGES = {77-126},
  YEAR = {2015},
}

@ARTICLE{b21b,
       author = {Bittmann, L{\'e}a},
        title = "{A quantum cluster algebra approach to representations of simply-laced quantum affine algebras}",
      journal = {Mathematische Zeitschrift},
      volume = {298},
      pages = {1449–1485},
          year = "2021",
}

@article{w,
    author = "Wang, Keyu",
    title = "{$Q{\widetilde{Q}}$-Systems for Twisted Quantum Affine Algebras}",
    journal = "Communications in Mathematical Physics",
    volume = "400",
    number = "2",
    pages = "1137--1179",
    year = "2023"
}

@incollection{nakanishi,
  title={Periodicities in cluster algebras and dilogarithm identities},
  author={Nakanishi, Tomoki},
  booktitle={Representations of algebras and related topics},
  pages={407--443},
  year={2011},
  publisher={European Mathematical Society-EMS-Publishing House GmbH}
}

@article{chari,
    author = {Chari, Vyjayanthi},
    title = {Braid group actions and tensor products},
    journal = {International Mathematics Research Notices},
    volume = {2002},
    number = {7},
    pages = {357-382},
    year = {2002},
    month = {01}
}

@article{fh22,
   TITLE = {{Weyl group symmetry of q-characters}},
  AUTHOR = {Frenkel, Edward and Hernandez, David},
  JOURNAL = {{Selecta Mathematica (New Series)}},
  VOLUME = {31},
  NUMBER = {4},
  PAGES = {72},
  YEAR = {2025},}

@misc{hz25,
      title={Jordan-H\"older property for shifted quantum affine algebras}, 
      author={Hernandez, David and Zhang, Huafeng},
      year={2025},
      eprint={2501.16859},
      archivePrefix={arXiv}}

@article{fz1, title={Cluster Algebras I: Foundations}, author={Fomin, Sergey and Zelevins, Andrei}, year={2002}, journal={Journal of the American Mathematical Society}, volume={15},number={2},pages={497-529}}

@article{yg, title={The Berenstein-Zelevinsky quantum cluster algebra conjecture}, year={2020}, journal={Journal of the European Mathematical Society}, author={Goodearl, Kenneth and Yakimov, Milen}, volume={22}, number={8}}

@article{bfz3,
      title={Cluster algebras III: Upper bounds and double Bruhat cells}, 
      author={Berenstein, Arkady and Fomin, Sergey and Zelevinsky, Andrei},
      year={2005},
      journal={Duke Mathematical Journal}, volume={126}, number={1}, pages={1--52}}

@article{fzbruhat,title={Double Bruhat Cells and Total Positivity}, author={Fomin, Sergey, and Andrei Zelevinsky}, year={1999},volume={12},numer={2}, pages={335--380}}

@misc{chari94,
      title={Quantum Affine Algebras and their Representations}, 
      author={Chari, Vyjayanthi and Pressley, Andrew},
      year={1994},
      eprint={9411145},
      archivePrefix={arXiv}
}

@misc{vv25,
      title={Representations of shifted affine quantum groups and Coulomb branches}, 
      author={Varagnolo, Michela and Vasserot, Eric},
      year={2025},
      eprint={2503.06262},
      archivePrefix={arXiv} 
}

@phdthesis{theseBittmann,
       author = {{Bittmann}, L{\'e}a},
        title = "{Quantum Grothendieck rings, cluster algebras and quantum affine category O}",
         year = "2019",
	school = "Universit{\'e} Paris Diderot",
url={https://theses.fr/2019USPCC024}}
    
    \begin{tabular}{ll}
    FRANCESCA PAGANELLI &
    DEPARTMENT S.B.A.I.\\ & SAPIENZA-UNIVERSITÀ DI ROMA\\
    & VIA SCARPA 10, 00164, ROMA (ITALY) \\
    & \&\\
     &UNIVERSITÉ PARIS CITÉ, SORBONNE UNIVERSITÉ\\
    & CNRS, IMJ-PRG\\
    & 8 PL. AURÉLIE NEMOURS, 75013, PARIS (FRANCE)\\
    email: {\tt francesca.paganelli@uniroma1.it}\\[5mm]
    
    \end{tabular}
\end{document}